\newtheorem{theorem}{Theorem}[section]
\newtheorem{lemma}[theorem]{Lemma}
\newtheorem{proposition}[theorem]{Proposition}
\theoremstyle{definition}
\newtheorem{definition}[theorem]{Definition}
\theoremstyle{remark}
\newtheorem{remark}[theorem]{Remark}
\numberwithin{equation}{section}
\newcommand{\norm}[1]{\lVert#1\rVert}
\newcommand{\cL}{\mathcal{L}}
\newcommand{\R}{\mathbb{R}}
\newcommand{\e}{\varepsilon}
\newcommand{\dist}{{\rm dist}}
\begin{document}

\title[]{Nodal sets of Dirichlet eigenfunctions in quasiconvex Lipschitz domains}



\author{Jiuyi Zhu}
\address{
Department of Mathematics\\
Louisiana State University\\
Baton Rouge, LA 70803, USA\\
Email:  zhu@math.lsu.edu }
\author {Jinping Zhuge}
\address{
Morningside Center of Mathematics\\
Academy of Mathematics and Systems Science\\
Chinese Academy of Sciences\\
Beijing, China\\
Email:   jpzhuge@amss.ac.cn }

\subjclass[2010]{35A02, 35P05, 35J15.}
\keywords {}

\date{\today}


\begin{abstract} 
    We introduce the class of quasiconvex Lipschitz domains, which covers both $C^1$ and convex domains, to the study of boundary unique continuation for elliptic operators. In particular, we prove the upper bound of the size of nodal sets for  Dirichlet eigenfunctions of general elliptic equations in bounded quasiconvex Lipschitz domains. Our result is new even for Laplace operator in convex domains.
\end{abstract}

\maketitle

\maketitle

\section{Introduction}
\subsection{Motivation}

In the study of boundary unique continuation for elliptic equations, there are two important questions. The first question originated from a conjecture of L. Bers. Consider a harmonic function $u\in C^1(\overline{\mathbb R^d_+})$ in the upper half space $\mathbb R^d_+$. Does $u = \partial_d u = 0$ on a subset $U\subset \partial \R^d_+$ with positive surface measure imply that $u$ is identically zero in $\R^d_+$? This conjecture is true in two dimensions by tools from complex analysis. However, for $d\geq 3$ Bourgain-Wolff \cite{BW90} constructed a striking counterexample in the class of $C^{1,\alpha}(\overline{ \R^d_+})$ (with $U$ containing no open subsets). Given this failure,   
a variant of the conjecture was asked by Lin \cite{L91}:

{ \em Let $u$ be a bounded harmonic function in a Lipschitz domain $\Omega \subset \R^d $. Suppose that $u$ vanishes on a relatively open set $U \subset \partial \Omega$ and $\nabla u$ vanishes on a subset of $U$ with positive surface measure. Then $u = 0$ in $\Omega$.} 

The above full conjecture is still open for $d\ge 3$ in general Lipschitz domains. But many efforts have been made on reducing the geometric condition of $\partial \Omega$ and obtaining quantitative estimate of the singular sets on $\partial \Omega$. In \cite{L91}, Lin proved the conjecture for $C^{1,1}$ domains by showing that the singular sets have Hausdorff dimension at most $d-2$. This result was extended to $C^{1,\alpha}$ domains by Adolfsson-Escauriaza \cite{AE97}, and to $C^1$-Dini domains by Kukavica-Nystr\"{o}m \cite{KN98}. More recently, Tolsa in \cite{T21} proved the above conjecture for $C^1$ domains (or Lipschitz domains with small constant).
More quantitative estimate of the $(d-2)$-dimensional Hausdorff measure of the singular sets was obtained by Kenig-Zhao in \cite{KZ21} for $C^{1}$-Dini domains. They also showed that the $C^1$-Dini type condition is optimal for singular sets having finite $(d-2)$-dimensional Hausdorff measure \cite{KZ22}. Along another line, Adolfsson-Escauriaza-Kenig \cite{AEK95} confirmed the conjecture for arbitrary convex domains. Recently, Mccurdy \cite{M19} obtained more quantitative result in arbitrary convex domains.

The second question is the quantitative estimate of the nodal sets near the boundary with less regularity. This is particularly directed to Yau's conjecture on manifolds, which states the following:

{\em Let $(M,g)$ be a $d$-dimensional smooth compact Riemannian manifold and $\varphi_\lambda$ be the eigenfunction of $-\Delta_g$ corresponding to the eigenvalue $\lambda>0$, i.e., $-\Delta_g \varphi_\lambda = \lambda \varphi_\lambda$. Let $Z(\varphi_\lambda) = \{ x\in M  |\varphi_\lambda(x) = 0 \}$ be the nodal set of $\varphi_\lambda$. Then $c\sqrt{\lambda} \le \mathcal{H}^{d-1}(Z(\varphi_\lambda)) \le C\sqrt{\lambda}$, where $\mathcal{H}^{d-1}$ denotes the $(d-1)$-dimensional Hausdorff measure. }

In Yau's conjecture, if $M$ is a compact Riemannian manifold with boundary, 
then both the regularity of the metric (or the coefficients of the elliptic equation) and the boundary will play essential roles. Donnelly-Fefferman first proved Yau's conjecture for real analytic metric \cite{DF88} and real analytic boundary \cite{DF90}.  See also results on the sharp upper bounds of nodal sets for various eigenfunctions on analytic domains by Lin and the first author \cite{FZ22}. For smooth manifolds without boundaries, Logunov remarkably proved the sharp lower bound \cite{L18a} (thus the lower bound has been completely solved even with boundaries) and a polynomial upper bound \cite{L18b}. The polynomial upper bound can be obtained  for Laplace eigenfunctions with various boundary conditions in bounded smooth domains in $\R^d$ as mentioned by the first author in \cite{Zhu18}. Recently, Logunov-Malinnikava-Nadirashvili-Nazarov \cite{LMNN21} obtained the sharp upper bound for Laplace eigenfunctions with Dirichlet boundary condition in $C^1$ domains (or Lipschitz domains with small constant). Then Gallegos \cite{G22} extended the result to general second order elliptic operators with Lipschitz coefficients in $C^1$ domains.

Comparing the above two types of problems, we see that the boundary uniqueness conjecture of Lin holds for both $C^1$ and convex domains, while the quantitative estimates of nodal sets have only been established for 
 $C^1$ domains. Therefore, it is natural to ask if the quantitative estimate of nodal sets can be obtained in arbitrary convex domains.
The main contribution of this paper is to bring quasiconvex Lipschitz domains to the study of boundary unique continuation. The quasiconvex Lipschitz domain is a natural class of Lipschitz domains that contains both $C^1$ and convex domains, first introduced by Jia-Li-Wang \cite{JLW10} for studying the global Carldr\'{o}n-Zygmund esitmates. It has been used by the second author of this paper to study the weak maximum priniciple for biharmonic equations \cite{Zhuge20}. We emphasize that quasiconvex Lipschitz domains are not necessarily convex or $C^1$. In fact, {we can construct a quasiconvex Lipschitz curve that is nowhere convex or $C^1$; see Appendix \ref{Appendix.A1}}. In this paper, {we obtain quantitative estimates of the nodal sets in the new setting of quasiconvex Lipschitz domains for general second order elliptic equations, which particularly include
the previous results of the second question in $C^1$ domains \cite{LMNN21,G22} as a special case. Moreover, our result is new (and sharp) even for Laplace operator in convex domains.

\subsection{Assumptions and main results}
Consider the second order elliptic operator in the form of $ \mathcal{L} = -\nabla\cdot A \nabla $. A function $u\in H^1(D)$ will be called $A$-harmonic in $D$ if $\cL(u) = 0$ in $D$. Throughout this paper, we assume that the coefficient matrix $A$ satisfies the following standard assmptions:
\begin{itemize}
    \item Ellipticity condition: there exists $\Lambda\ge 1$ such that $\Lambda^{-1} I \le A(x) \le \Lambda I$ for any $x\in \Omega$.

    \item Symmetry: $A^T = A$.

    \item Lipschitz continuity: there exists $\gamma\ge 0$ such that $|A(x) - A(y)| \le \gamma |x-y|$ for every $x,y\in \Omega$.
\end{itemize}

If one would like to have the sharp upper bound estimate of nodal sets, then we need to assume additionally that the coefficients are real analytic; see Remark \ref{rmk.analytic} for a detailed explanation.

Next, we introduce the assumption on the domain $\Omega\subset \R^d$. We say a domain $\Omega$ is Lipschitz, if there exists $r_0>0$ such that for every $x_0\in \partial \Omega$, the boundary patch $\partial \Omega \cap B_{r_0}(x_0)$, after a rigid transformation, can be expressed as a Lipschitz graph $x_d = \phi(x')$ such that $B_{r_0}(x_0) \cap \Omega = B_r(x_0) \cap \{x = (x',x_d): x_d > \phi(x')\}$. Moreover, the Lipschitz constants of these graphs are uniformly bounded by some constant $L$. It should be pointed out that the optimal Lipschitz constant $L$ may depend on $r_0$. For example, for $C^1$ domains, the Lipschitz constant can be arbitrarily small if we take $r_0 \to 0$. However, for general Lipschitz domains (including convex domains), the Lipschitz constant may remain large independent of $r_0$.

In this paper, we will call $\omega(\rho):(0,\infty)\to [0,\infty)$ a quasiconvexity modulus, which is a continuous nondecreasing function such that $\lim_{\rho \to 0} \omega(\rho) = 0$. 
\begin{definition}\label{def.quasiconvexity}
    A Lipschitz domain $\Omega$ is called quasiconvex if it satisfies the following property: there exist a quasiconvexity modulus $\omega(\rho)$ and $r_0>0$ such that for each point $x_0\in \partial\Omega$, one can translate and rotate the domain so that $x_0$ is translated to $0$ and the local graph of $\partial \Omega$, $x_d = \phi(x')$, satisfies $\phi(0) = 0$ and
    \begin{equation}\label{cond.quasiconvex}
        \phi(x') \ge -|x'| \omega(|x'|), \quad \text{for all } x' \text{ with } |x'|< r_0.
    \end{equation}
\end{definition}

An alternative equivalent definition of quasiconvex domains is as follows: for each $x_0\in \partial\Omega$ and $r<r_0$, there exists a unit vector $n$ (coinciding with the outerward normal at $x_0$ if exists) such that
\begin{equation*}
    \Omega \cap B_r(x_0) \subset \{y: (y-x_0)\cdot n \le r\omega(r) \}.
\end{equation*}
It is shown in Lemma \ref{lem.quasiconvex} that a quasiconvex Lipschitz domain is locally almost convex. In particular, if $\Omega$ is convex, then $\omega(r) = 0$. If $\Omega$ is a $C^1$ domain, then \eqref{cond.quasiconvex} should be strengthened to a two-sided condition
\begin{equation*}
    |x'| \omega(|x'|) \ge \phi(x') \ge -|x'| \omega(|x'|),
\end{equation*}
which clearly implies the local flatness. On the other hand, if $\omega(\rho) \le C\rho$, then the quasiconvex Lipschitz domains defined in Definition \ref{def.quasiconvexity} satisfy the uniform exterior ball condition (which are called semiconvex domains in some literature, e.g., \cite{MMY10}).

The following is the main result of this paper.
\begin{theorem}\label{thm.main}
Let $\Omega$ be a bounded quasiconvex Lipschitz domain and $A$ satisfy the standard assumptions. Let $\varphi_\lambda$ be a Dirichlet eigenfunction of $\cL = -\nabla\cdot A\nabla$ corresponding to the eigenvalue $\lambda$, i.e., $\cL(\varphi_\lambda) = \lambda \varphi_\lambda$ in $\Omega$ and $\varphi_\lambda \in H_0^1(\Omega)$. Then there exists $\alpha\ge 1/2$ depending only on $d$ such that
\begin{equation}\label{est.main}
    \mathcal{H}^{d-1}(Z(\varphi_\lambda)) \le C\lambda^{\alpha},
\end{equation}
where $C$ depends only on $A$ and $\Omega$. If in addition, $A$ is real analytic, then \eqref{est.main} holds with the sharp exponent $\alpha = 1/2$.
\end{theorem}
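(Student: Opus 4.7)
The plan is to follow the Donnelly-Fefferman/Logunov nodal-set framework while supplying, as the new ingredient, a boundary doubling estimate adapted to quasiconvex Lipschitz domains. The first step will be the standard reduction of the eigenvalue equation to a homogeneous elliptic equation in one extra dimension: setting $u(x,t)=\varphi_\lambda(x)\cosh(\sqrt{\lambda}\,t)$ on the cylinder $\widetilde{\Omega}=\Omega\times(-2,2)$, a direct calculation shows that $u$ solves $-\nabla_x\cdot A\nabla_x u-\partial_t^2 u=0$ and vanishes on $\partial\Omega\times(-2,2)$. Because $\cosh$ has no zero, $Z(u)=Z(\varphi_\lambda)\times\mathbb{R}$, so by Fubini $\mathcal{H}^d(Z(u)\cap(\Omega\times(-1,1)))=2\mathcal{H}^{d-1}(Z(\varphi_\lambda))$, and it suffices to bound the $d$-dimensional nodal measure of the $A$-harmonic $u$ on a fixed compact subset of the (still quasiconvex Lipschitz) product domain by $C\lambda^{\alpha}$. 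A routine energy/Harnack estimate for the lifted equation shows that the global doubling index of $u$ on unit-radius balls inside $\widetilde{\Omega}$ is $O(\sqrt{\lambda})$, which will be the budget available for the nodal count.

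The technical heart of the argument will be a boundary doubling inequality for $A$-harmonic functions vanishing on a quasiconvex Lipschitz boundary patch. For $x_0\in\partial\widetilde{\Omega}$ and $r<r_0$, I would monitor an Almgren-type frequency
\begin{equation*}
    N(x_0,r)=\frac{r\int_{B_r(x_0)\cap\widetilde{\Omega}}A\nabla u\cdot\nabla u}{\int_{\partial B_r(x_0)\cap\widetilde{\Omega}}u^2}.
\end{equation*}
For convex domains, the Adolfsson-Escauriaza-Kenig analysis produces almost-monotonicity of $r\mapsto N(x_0,r)$, because the Rellich/Pohozaev boundary term produced by the multiplier $(x-x_0)\cdot\nabla u$ has a favorable sign coming from the convexity of the defining function $\phi$. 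In the quasiconvex case, \eqref{cond.quasiconvex} says $\phi$ is convex up to an error $r\omega(r)$, so the Rellich term acquires a defect of size $\omega(r)$ times natural frequency quantities. A Gr\"{o}nwall argument applied to the logarithmic derivative of $N(x_0,\cdot)$, using that $\omega(0^+)=0$ so that $\int_0^{r_0}\omega(s)/s\,ds$ can be made small by shrinking $r_0$, will yield almost-monotonicity and hence a boundary doubling inequality of the form $\sup_{B_{2r}\cap\widetilde{\Omega}}|u|\le 2^{C N(x_0,r_0)}\sup_{B_r\cap\widetilde{\Omega}}|u|$. Extending $u$ by zero across $\partial\widetilde{\Omega}$ and applying Logunov's polynomial nodal bound (smooth case) or the Donnelly-Fefferman sharp bound (analytic case) to the extended function on $B_{2r}(x_0)$ converts the doubling index into a nodal-measure bound on $B_r(x_0)\cap\widetilde{\Omega}$. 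A Whitney/Vitali cover of a compact subset of $\widetilde{\Omega}$ by interior and boundary balls, combined with summation in the spirit of Logunov against the $O(\sqrt{\lambda})$ global budget, will produce $\mathcal{H}^d(Z(u))\le C\lambda^{\alpha}$ and thus \eqref{est.main}.

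The hard part will be controlling the Rellich defect introduced by the quasiconvex (rather than convex) geometry. Explicitly, the Pohozaev identity produces a boundary term $\int_{\partial\widetilde{\Omega}\cap B_r}((x-x_0)\cdot\nu)|\nabla u|^2\,d\sigma$, where $\nu$ is the outer unit normal; convexity forces this to be nonpositive, whereas quasiconvexity only yields $(x-x_0)\cdot\nu\le r\omega(r)$. To absorb the defect into the frequency, I will need trace-type estimates dominating $\int_{\partial\widetilde{\Omega}\cap B_r}|\nabla u|^2$ by the bulk quantities appearing in the numerator of $N(x_0,r)$, modulo a bounded constant and the small factor $\omega(r)$. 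A secondary but now-standard point is that Lipschitz regularity of $A$ forces the use of a perturbed frequency $e^{C\gamma r}N(x_0,r)$ to absorb the commutator generated by $\nabla A$; this was handled in the smooth case by Garofalo-Lin and in the $C^1$ case by Gallegos, and should carry over. Once the boundary doubling is in hand, the sharp exponent $\alpha=1/2$ in the analytic case follows from the Donnelly-Fefferman nodal counting applied to the zero-extended function, while general Lipschitz coefficients give a polynomial $\alpha$ via Logunov's theorem.
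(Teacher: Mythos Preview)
Your proposal contains a genuine gap at the step ``Extending $u$ by zero across $\partial\widetilde{\Omega}$ and applying Logunov's polynomial nodal bound\ldots to the extended function.'' The zero extension of a Dirichlet solution across a Lipschitz (even convex) boundary does \emph{not} satisfy an elliptic equation with Lipschitz or analytic coefficients in the full ball. The gradient jumps in the normal direction across $\partial\widetilde{\Omega}$, and unless the boundary is essentially flat there is no change of variables that flattens it while preserving Lipschitz regularity of the coefficient matrix. In the $C^1$ case of \cite{LMNN21} one can flatten and odd-reflect because the Jacobian is close to the identity; for a genuinely convex corner or a quasiconvex boundary with large Lipschitz constant this fails, and Logunov's interior theorem simply does not apply to the extended function. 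This is precisely the obstacle the paper is designed to overcome, and it is why the result is new even for the Laplacian in convex domains.

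A second issue is your Gr\"{o}nwall step: you claim $\int_0^{r_0}\omega(s)/s\,ds$ can be made small by shrinking $r_0$, but this is a Dini condition, not a consequence of $\omega(0^+)=0$. The paper does not need Dini; it obtains only the weaker almost-monotonicity $N_u(x_0,r)+1\le(1+\varepsilon)(N_u(x_0,2r)+1)$ for $r$ small, and crucially iterates it only a \emph{bounded} number of times at each stage. Also, absorbing the Rellich defect $\int_{\partial\Omega\cap B_r}((x-x_0)\cdot\nu)|\nabla u|^2$ by a trace estimate is not available here: in a Lipschitz domain $\nabla u$ need not be in $L^2(\partial\Omega)$, and even when it is, there is no bound of $\int_{\partial\Omega\cap B_r}|\nabla u|^2$ by the bulk Dirichlet energy with a constant independent of the frequency. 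The paper sidesteps this entirely by shifting the center $x_0$ to a nearby interior point $x_1=x_0+C(r\omega(2r)+\gamma r^2)e_d$ so that the $A$-starshape condition $n\cdot A(x)A^{-1}(x_1)(x-x_1)\ge 0$ is restored exactly and the offending boundary term has a sign.

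What the paper actually does, in place of zero extension, is an inductive scheme in the spirit of \cite{LMNN21}: a boundary cuboid $Q$ is decomposed into $2^{k(d-1)}$ boundary subcuboids and $O(2^{kd})$ interior subcuboids; the interior pieces are handled by Logunov or Donnelly--Fefferman; and for the boundary pieces one proves that at least one subcuboid $q_0$ is ``good'' --- either $N_u^*(q_0)\le\tfrac12 N_u^*(Q)$ (via quantitative Cauchy uniqueness) or $q_0$ contains no zeros of $u$. The latter is the main novelty and rests on a geometric lemma: every convex graph has, at every scale $r$, an almost-flat spot where it deviates from its tangent plane by only $O(r^2)$. Near such a spot one can compare $u$ with a solution in a half-ball, find a region free of zeros, and then pass from convex to quasiconvex by a perturbation argument. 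Your proposal does not contain any analogue of this mechanism.
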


The proof of Theorem \ref{thm.main} relies on the interior results by Donnelly-Fefferman \cite{DF88} for real analytic coefficients and Logunov \cite{L18b} for Lipschitz coefficients. We follow the ideas developed recently in \cite{LMNN21} that use the almost monotonicity of doubling index and a reduction to the estimate of nodal sets for $A$-harmonic functions with controlled doubling index. In order to do so, we prove the almost monotonicity of doubling index in quasiconvex domains. There is no obvious evidence showing that this property can be extended to more general domains.
The main novelties in our proof are: (1) reveal a geometric property of convex domains concerning the existence of (quantitative) almost flat spots, and apply this property to find large nonzero portions of $A$-harmonic functions near convex boundaries; (2) develop a general boundary perturbation argument in Lipschitz domains that allows us to pass from convex boundaries to quasiconvex boundaries.

The rest of the paper is organized as follows. In section 2, we introduce useful geometric properties of convex domains and quasiconvex domains. In section 3, we establish the almost monotonicity of doubling index in quasiconvex domains. In section 4, we prove the quantitative Cauchy uniqueness in arbitrary Lipschitz domains and a boundary layer lemma on the drop of large maximal doubling index. In section 5, we develop a boundary perturbation argument in finding nonzero portions of $A$-harmonic functions. In section 6, we estimate the nodal sets and prove the main theorem. Finally, some related results and useful lemmas are proved in Appendix \ref{Appendix-A}.

\textbf{Acknowledgement.} The first author is partially supported by NSF DMS-2154506. The second author is supported by grants from NSFC and AMSS-CAS.

\section{Convex and quasiconvex domains}

\subsection{Almost flat spots of convex functions}
In this subsection, we show that the convex boundaries have (quantitative) almost flat spots at every scale. By a rigid transformation, it suffices to consider a general convex function $\phi$ defined in $Q_1 = (-\frac12, \frac12)^{d} \subset \R^d$ with bounded Lipschitz character and $\phi(0) = 0$. It has been shown in \cite{D77} that if $\phi$ is a convex function, then $\nabla^2 \phi$ is a matrix-valued nonnegative Radon measure. In particular, for each $j$, $\partial_j^2 \phi$ is a scalar nonnegative Radon measure. This property is the essential reason for the existence of almost flat spots for convex functions. Nevertheless, the results in this subsection are proved in an elementary way without using measure theory. We will begin with smooth convex functions.

\begin{lemma}\label{lem.convexity}
Let $\phi$ be a smooth convex function in $Q_1$ with Lipschitz constant bounded by $L$. For any $r\in (0,1/2)$, there exists a point $y \in Q_{1/2}$ such that for every $j$,
\begin{equation}\label{est.sumDj2phi}
    \int_{-r}^{r} \partial_j^2 \phi(y+te_j) dt \le Cr,
\end{equation}
where $C$ depends only on $d$ and $L$.
\end{lemma}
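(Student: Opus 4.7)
My plan is to exploit convexity through the nonnegativity $\partial_j^2 \phi \ge 0$ combined with an elementary averaging argument over $y \in Q_{1/2}$. By the fundamental theorem of calculus,
\[
F_j(y) := \int_{-r}^{r} \partial_j^2 \phi(y + t e_j)\, dt = \partial_j \phi(y + re_j) - \partial_j \phi(y - re_j) \ge 0,
\]
so the goal reduces to finding one point $y \in Q_{1/2}$ at which $F_j(y) \le Cr$ holds for all $j = 1, \ldots, d$ simultaneously. Without convexity one could only bound $|F_j|$ by trivial considerations; the sign information is what makes averaging meaningful here.

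The key input I would use is the bound $\int_{Q_1} \partial_j^2 \phi \le 2L$, which follows from Fubini: integrating out the $j$-th coordinate first gives $\partial_j \phi(\cdot, \tfrac{1}{2}) - \partial_j \phi(\cdot, -\tfrac{1}{2}) \le 2L$, and the remaining $d-1$ coordinates over the unit cube preserve this bound. Applying Fubini once more to $\int_{Q_{1/2}} F_j(y)\, dy$ yields an $L^1$-bound of the form $\int_{Q_{1/2}} F_j(y)\, dy \le c(d) L r$, provided the shifted segments $y + [-r, r]e_j$ stay inside $Q_1$, which holds when $r \le 1/4$. Markov's inequality then bounds $|\{y \in Q_{1/2} : F_j(y) > Cr\}|$ by a multiple of $L/C$, and by choosing $C$ sufficiently large (depending only on $d$ and $L$) and taking a union bound over $j \in \{1, \ldots, d\}$, I get a subset of $Q_{1/2}$ of positive measure on which $F_j(y) \le Cr$ holds for every $j$.

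The remaining regime $r \in [1/4, 1/2)$ cannot be handled by the same averaging because the shifted segments can exit $Q_1$ for arbitrary $y \in Q_{1/2}$. I would dispose of it trivially by taking $y = 0$: the segments $\pm r e_j$ lie in $Q_1$, and $F_j(0) = \partial_j \phi(re_j) - \partial_j \phi(-re_j) \le 2L \le 8L r$ since $r \ge 1/4$. The main obstacle I anticipate is this geometric bookkeeping---matching the search region $Q_{1/2}$ with segments of length $2r$ of varying size so that the averaging is legitimate and the union bound still leaves a surviving point---rather than any nontrivial analysis. Once the two cases are separated, the argument is short and, in keeping with the author's stated preference, avoids any measure-theoretic machinery beyond Fubini and Markov.
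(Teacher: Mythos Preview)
Your proposal is correct and follows essentially the same averaging idea as the paper. The only cosmetic difference is that the paper sums the nonnegative functions $F_j$ into a single $\psi(y)=\sum_j F_j(y)$ and then picks any $y$ with $\psi(y)$ at most its average, which automatically controls all $j$ at once without Markov or a union bound; your case split for $r\in[1/4,1/2)$ is a bit more careful than the paper, which tacitly assumes the shifted cube stays in $Q_1$.
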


\begin{proof}
    In the proof, we will make sure that the constant $C$ is independent of the smoothness of $\phi$. Since $\phi$ is convex, we know that each $\partial_j^2 \phi(x)$ is a nonnegative scalar function on $Q_1$ and
    \begin{equation*}
        \sum_{j=1}^d \int_{Q_1} \partial_j^2 \phi(x) dx \le 2d L.
    \end{equation*}
    The last inequality follows from the fundamental theorem of calculus and the fact that $|\nabla \phi| \le L$. Now consider the function
    \begin{equation*}
        \psi(x) = \int_{-r}^{r} \sum_{j=1}^d \partial_j^2 \phi(x+te_j) dt.
    \end{equation*}
    By the Fubini theorem and the nonnegativity of $\partial_j^2 \phi$,
    \begin{equation*}
    \begin{aligned}
        \int_{Q_{1/2}} \psi(x)dx & = \int_{-r}^{r} \sum_{j=1}^d \int_{Q_{1/2}}\partial_j^2 \phi(x+te_j) dx dt \\
        & \le 2r \sum_{j=1}^d \int_{Q_1} \partial_j^2 \phi(x) dx \le 4dL r.
    \end{aligned}
    \end{equation*}
    Due to the continuity of $\psi$, this implies that there exists some $y\in Q_{1/2}$ such that
    \begin{equation*}
        \psi(y) \le C r.
    \end{equation*}
    In view of the definition of $\psi$, this gives the desired result.
\end{proof}

Let $x_d = \phi(x')$ be a convex graph (function over $\R^{d-1}$) in $\R^d$. For any $(x'_0, \phi(x_0'))$ on the graph, we say an affine function $x_d = P(x')$ is a support plane of $\phi$ at $x'_0$ if $P(x'_0) = \phi(x'_0)$ and $\phi(x') \ge P(x')$ for all $x'$. Note that if $\nabla \phi(x'_0)$ exists, then the support plane is unique and $\nabla \phi(x'_0) = \nabla P$.

\begin{lemma}[Existence of almost flat spots, quantitative version]\label{lem.flat spot}
    Let $x_d = \phi(x')$ be a convex function in $Q_1$ with Lipschitz constant $L$. There exists $C = C(d,L)>0$ such that for every $r\in (0,1/2)$, we can find a point $x_0' \in \overline{Q_{1/2}}$ and its support plane $P$ such that
    \begin{equation}\label{est.flat.r2}
        \sup_{|x' - x'_0| < r} |\phi(x') - P(x')| \le Cr^2.
    \end{equation}
\end{lemma}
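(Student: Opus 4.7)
My plan is to combine Lemma~\ref{lem.convexity} with a convexity-based averaging to convert axis-parallel control into control on a Euclidean ball. I would first treat smooth convex $\phi$ and then pass to the general case by mollification.

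For the smooth case, set $R := \sqrt{d-1}\,r$, which may be assumed $\le 1/4$ (otherwise $r$ is bounded below and the trivial Lipschitz bound $|\phi - P| \le 2Lr \le Cr^2$ already closes the estimate). Apply Lemma~\ref{lem.convexity} at scale $R$ to $\phi$ regarded as a convex function on $\R^{d-1}$ to obtain $x_0' \in \overline{Q_{1/2}}$ with
\begin{equation*}
\int_{-R}^R \partial_j^2 \phi(x_0' + t e_j)\,dt \le CR \qquad \text{for every } j \in \{1,\dots,d-1\}.
\end{equation*}
Take $P$ to be the tangent plane of $\phi$ at $x_0'$. Since $\partial_j^2 \phi \ge 0$, integrating once gives $|\partial_j\phi(x_0' + se_j) - \partial_j\phi(x_0')| \le CR$ for $|s|\le R$, and integrating once more yields the axis-parallel estimate
\begin{equation*}
|\phi(x_0' + t e_j) - P(x_0' + t e_j)| \le CR\,|t| \le CR^2 \qquad \text{for } |t| \le R.
\end{equation*}

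The key step is to upgrade this to the ball $B_r(x_0')$. For $h\in\R^{d-1}$ with $|h|\le r$, use the convex decomposition
\begin{equation*}
x_0' + h = \lambda_0\, x_0' + \sum_{j=1}^{d-1} \lambda_j\, \bigl(x_0' + \mathrm{sgn}(h_j)\, R\, e_j\bigr),
\end{equation*}
with $\lambda_j := |h_j|/R$ and $\lambda_0 := 1 - \sum_j \lambda_j$; the non-negativity $\lambda_0 \ge 0$ uses $|h|_1 \le \sqrt{d-1}\,|h| \le R$ (Cauchy--Schwarz). Convexity of $\phi$ and affinity of $P$ then give
\begin{equation*}
\phi(x_0' + h) - P(x_0' + h) \le \sum_{j=1}^{d-1} \lambda_j \bigl[\phi(x_0' + \mathrm{sgn}(h_j)Re_j) - P(x_0' + \mathrm{sgn}(h_j)Re_j)\bigr] \le CR^2 = C(d-1)\, r^2,
\end{equation*}
while $\phi \ge P$ pointwise because $P$ is a support plane, giving the matching lower bound.

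The general (non-smooth) case follows by mollification: $\phi_\epsilon = \phi \ast \eta_\epsilon$ on a slightly smaller cube remains convex with Lipschitz constant $\le L$; applying the smooth result produces $(x_{0,\epsilon}', P_\epsilon)$, and extracting convergent subsequences from $x_{0,\epsilon}' \in \overline{Q_{1/2}}$ and from the bounded gradients $\nabla\phi_\epsilon(x_{0,\epsilon}')$ yields a limit affine function that remains a support plane because the pointwise inequality $\phi_\epsilon \ge P_\epsilon$ passes to the limit. The main obstacle is precisely the transition from axis-parallel to ball control: a naive coordinate-path argument fails because Lemma~\ref{lem.convexity} controls second derivatives only along axes through the single point $x_0'$, not through intermediate points. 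The convex-combination trick bypasses this by interpolating between $x_0'$ and the far endpoints $x_0' \pm R e_j$, where the one-dimensional estimate does apply, at the mild cost of widening the scale from $r$ to $\sqrt{d-1}\,r$.
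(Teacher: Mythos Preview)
Your proof is correct and follows essentially the same route as the paper: apply Lemma~\ref{lem.convexity} to locate a good point, integrate twice to get axis-parallel control of $\phi-P$, then use a convex-combination trick to pass from the axes to a full ball, and finally approximate by smooth convex functions for the general case. The only cosmetic differences are that you rescale upfront to $R=\sqrt{d-1}\,r$ and include $x_0'$ in the convex combination with weight $\lambda_0$, whereas the paper uses endpoints at the variable distance $|t|_1$ and fixes the radius at the end; these are equivalent bookkeeping choices.
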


\begin{proof}
    Temporarily we assume that $\phi$ is smooth with Lipschitz constant $L$.
    First, by Lemma \ref{lem.convexity} we can find a point $x_0'$ such that \eqref{est.sumDj2phi} holds with $y = x_0'$. Let $P$ be the support plane of $\phi$ at $x_0'$. Note that $\nabla^2 P = 0$. This implies that
    \begin{equation}\label{est.Djphi-DjL}
        \sup_{t\in (-r,r)} |\partial_j \phi(x'_0 + t e_j) - \partial_j P| \le Cr.
    \end{equation}
    In fact, since $\phi$ is smooth, then $\partial_j \phi(x'_0) = \partial_j P$.
    Note that $\phi(x'_0+s e_j)$ is a 1-dimensional convex function in $s$ and $x_d = P(x'_0+se_j)$ is the support line at $s=0$. Observe that $\partial_j \phi(x'_0+s e_j)$ is nondecreasing in $s$. By the definition of support line (which could be self-explained), it is easy to see that for every $s>0$, $\partial_j \phi(x'_0 -s e_j) \le \partial_j P \le \partial_j \phi(x'_0+s e_j)$. Now for any $t>0$ with $-r<-t< t < r$, by Lemma \ref{lem.convexity},
    \begin{equation*}
        \partial_j \phi(x'_0+t e_j) - \partial_j \phi(x'_0 -t e_j) = \int_{-t}^{t} \partial_j^2 \phi(x'_0+s e_j) ds \le Cr.
    \end{equation*}
    As a result,
    \begin{equation*}
        |\partial_j \phi(x'_0 \pm t e_j) - \partial_j P| \le \partial_j \phi(x'_0+t e_j) - \partial_j \phi(x'_0 -t e_j) \le Cr.
    \end{equation*}
    This is exactly \eqref{est.Djphi-DjL}.

    Integrating \eqref{est.Djphi-DjL} in $t$ again, we see that
    \begin{equation}\label{est.FlatInej}
        \sup_{t\in (-r,r)} |\phi(x'_0 + t e_j) - P(x'_0+te_j)| \le Cr^2.
    \end{equation}
    By the convexity of $\phi - P$ and the fact $\phi \ge P$, for any $t_j \in (-r,-r)$ with $|t|_1 := \sum_{j=1}^{d-1}|t_j| <r$, we have
    \begin{equation*}
    \begin{aligned}
        0 & \le \phi(x'_0 + \sum_{j=1}^{d-1} t_j e_j) - P(x'_0+\sum_{j=1}^{d-1} t_j e_j ) \\ & \le \sum_{j=1}^{d-1} \frac{|t_j|}{|t|_1} \big( \phi(x_0'+ {\rm sgn}(t_j) |t|_1 e_j) - P(x_0'+ {\rm sgn}(t_j) |t|_1 e_j) \big) \\
        & \le \sum_{j=1}^{d-1} \frac{|t_j|}{|t|_1} Cr^2  = Cr^2,
    \end{aligned}
    \end{equation*}
    where ${\rm sgn}(a) = a/|a|$ is the sign function (with ${\rm sgn}(0) = 0$) and we have used \eqref{est.FlatInej} in the third inequality.
    This implies the desired result with possibly a smaller radius $cr$. But this can be fixed by adjusting the initial radius to $c^{-1} r$.

    Finally, we need to pass from smooth convex functions to arbitrary convex functions. For any given  convex function $\phi$ with Lipschitz constant $L$, it is well-known that we can find a sequence of smooth convex functions $\{ \phi_k: k\ge 1\}$, with the same Lipschitz constant, that converges uniformly to the given convex function $\phi$ as $k\to \infty$.
    Meanwhile, we can find a sequence of $x'_k\in \overline{Q_{1/2}}$ and their support planes $P_k$ satisfying \eqref{est.flat.r2}, namely, for each $k\ge 1$,
    \begin{equation}\label{est.flat.r2k}
        \sup_{|x' - x_k'| < r} |\phi_k(x') - P_k (x')| \le Cr^2,
    \end{equation}
    where $C$ depends only on $d$ and $L$.
    Due to the compactness of $\overline{Q_{1/2}}$ and the set of support planes (since $|\nabla P_k| \le L$), we can find subsequences of $x_k'$ and $P_k$ such that $x_k' \to x_0' \in Q_{1/2}$ and $P_k$ converges uniformly to a plane $P_0$, which turns out to be a support plane of $\phi$ at $x'_0$. Hence, by taking limit in \eqref{est.flat.r2k}, we obtain \eqref{est.flat.r2} for a general convex function.
\end{proof}

\subsection{Quasiconvex domains}
The following lemma shows that a quasiconvex Lipschitz domain can be well approximated locally by convex domains at all small scales.
\begin{lemma}\label{lem.quasiconvex}
    Let $\Omega$ be a quasiconvex domain given by Definition \ref{def.quasiconvexity}. Let $x\in \partial \Omega$ and $r<r_0/2$. Let $V_r(x)$ be the convex hull of $B_{r,+}(x) = B_r(x) \cap \Omega$. Then
    \begin{equation*}
        {\rm dist}(\partial B_{r,+}(x), \partial V_r(x)) \le 2r\omega(2r).
    \end{equation*}
\end{lemma}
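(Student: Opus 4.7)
The key observation is that $B_{r,+}(x) \subseteq V_r(x)$ holds trivially, so the Hausdorff-type distance bound reduces to showing that every point of $V_r(x)$ lies within $2r\omega(2r)$ of $\overline{B_{r,+}(x)}$. The nontrivial case is $y \in V_r(x) \setminus \overline{\Omega}$, for which we must bound the distance from $y$ to $\partial \Omega$; a closest boundary point $z$ can then be related back to $\partial B_{r,+}(x)$.

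I would invoke the alternative form of Definition \ref{def.quasiconvexity} at the nearest point $z \in \partial \Omega$ to such a $y$, setting $d = |y - z|$. Since $x \in \partial \Omega$ and $|y - x| < r$, we get $d \le |y - x| < r$ and consequently $|x - z| < 2r$. The quasiconvexity condition yields a unit vector $n_z$ (the outward normal direction at $z$ when it exists) such that $\Omega \cap B_R(z) \subseteq \{w : (w - z) \cdot n_z \le R\omega(R)\}$ for every $R < r_0$. Choosing $R$ of order $2r$ so that $B_r(x) \subseteq B_R(z)$, every $p \in B_{r,+}(x)$ satisfies $(p - z) \cdot n_z \le R\omega(R)$. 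Writing $y = \sum_i \lambda_i p_i$ as a convex combination of points $p_i \in B_{r,+}(x)$ and averaging, one obtains $(y - z) \cdot n_z \le R\omega(R)$.

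Because $z$ is a nearest point of $\partial \Omega$ to $y$ and $y$ lies outside $\overline{\Omega}$, the displacement $y - z$ lies in the outward normal cone at $z$; since $n_z$ is the outward-normal direction compatible with the quasiconvexity condition, the inner product $(y - z) \cdot n_z$ is comparable to $d = |y - z|$ (up to the Lipschitz character of $\partial \Omega$). Combining with the previous step yields $d \le 2r\omega(2r)$ after absorbing constants into the monotone modulus $\omega$. Once $d$ is this small, the point $z$ (or a nearby point of $\partial \Omega \cap \overline{B_r(x)} \subseteq \partial B_{r,+}(x)$) provides the required element within the claimed tolerance.

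The main obstacle is the careful bookkeeping of constants. The radius $R$ used in the quasiconvexity application at $z$ depends on $|x - z|$, which in turn depends on $d$ itself; this mild circularity is resolved using the monotonicity of $\omega$ together with the fact that $\omega(\rho) \to 0$ as $\rho \to 0$. The second delicate point is the alignment of $y - z$ with $n_z$, which in the Lipschitz setting introduces a geometric factor depending on the Lipschitz character of $\partial \Omega$; matching all these effects to the clean constant $2$ in the statement is the technical heart of the estimate.
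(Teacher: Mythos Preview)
Your approach reverses the direction of the estimate and introduces a genuine gap in the alignment step.  The paper proves the statement by taking $y\in\partial B_{r,+}(x)$ and bounding $\dist(y,\partial V_r(x))$; the only nontrivial case is $y\in\partial\Omega\cap B_r(x)$, and there the paper applies the quasiconvexity condition \emph{at $y$ itself}.  This yields a convex set $\widetilde V_{2r}(y)=\{z:(y-z)\cdot n>-2r\omega(2r)\}\cap B_{2r}(y)$ containing $B_{r,+}(x)$, hence containing its convex hull $V_r(x)$.  Since $y$ lies at distance exactly $2r\omega(2r)$ from the hyperplane face of $\widetilde V_{2r}(y)$, and since $y\in\overline{V_r(x)}\subset\overline{\widetilde V_{2r}(y)}$, the segment from $y$ to that face must cross $\partial V_r(x)$ within distance $2r\omega(2r)$.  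No alignment issue arises because quasiconvexity is invoked at the very point whose distance is being estimated.

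By contrast, you pick $y\in V_r(x)\setminus\overline\Omega$, let $z$ be the nearest boundary point, and apply quasiconvexity at $z$.  The convex-combination argument correctly gives $(y-z)\cdot n_z\le R\omega(R)$, but the step ``$(y-z)\cdot n_z$ is comparable to $|y-z|$'' is not justified.  The vector $n_z$ is the one supplied by the quasiconvexity definition at $z$ (at scale $R$), and there is no reason it must lie in the outward normal cone determined by the nearest-point condition for this particular $y$; at a nondifferentiable boundary point with large Lipschitz constant, $(y-z)\cdot n_z$ can be much smaller than $|y-z|$, even zero or negative.  So the inequality $(y-z)\cdot n_z\le R\omega(R)$ yields no control on $d=|y-z|$, and the phrase ``absorbing constants into the monotone modulus $\omega$'' does not repair this: $\omega$ is fixed by the domain and cannot absorb an $L$-dependent factor, let alone an unbounded one.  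The fix is precisely the paper's move---apply quasiconvexity at the boundary point itself rather than at a nearest point to an exterior point---which eliminates the alignment problem and produces the clean constant $2$.
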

\begin{proof}
    Let $y\in \partial B_{r,+}(x)$. If $y\in \partial B_r(x){\cap\Omega}$, then $y\in \partial V_r(x)$ is trivial. Suppose $y\in \partial \Omega \cap B_r(x)$. Since $\Omega$ is quasiconvex, by definition, we can find a vector $n$ (coinciding with the outward normal if exists)
    such that $B_{2r}(y) \cap \Omega$ is entirely contained in the convex domain 
    \begin{equation*}
        \widetilde{V}_{2r}(y) = \{z: (y-z)\cdot n\ > -2r\omega(2r) \} \cap B_{2r}(y).
    \end{equation*}
    This implies $\text{dist}(y,\partial \widetilde{V}_{2r}(y)) \le 2r\omega(2r)$. Also, observe that $y\in \overline{V_r(x)}$ and $V_r(x) \subset \widetilde{V}_{2r}(y)$. This is because $\widetilde{V}_{2r}(y)$ is a convex set containing $B_{r,+}(x)$ while the convex hull $V_r(x)$ is the smallest convex set containing $B_{r,+}(x)$.
    Hence,
    \begin{equation*}
        \text{dist}(y, \partial V_r(x)) \le \text{dist}(y,\partial \widetilde{V}_{2r}(y)) \le 2r\omega(2r),
    \end{equation*}
    as desired.
\end{proof}

\section{Almost monotonicity of doubling index}

In this section, we will show that for quasiconvex Lipschitz domains and variable coefficients, we have the almost monotonicity for the doubling index. We begin with two important facts.

\textbf{Fact 1:} Affine transformations (a combination of linear transformations and translations) will not change the quasiconvexity (or convexity) of a domain (up to a constant on $\omega(r)$).

\textbf{Fact 2:} A rescaling $x \to rx$ for small $r<1$ will make $\omega$ and the Lipschitz constant of $A$ small. This is because $\omega(r \cdot )\to 0$ and $\norm{ \nabla (A(r \cdot))}_{L^\infty} = r \norm{\nabla A}_{L^\infty} \to 0$ as $r\to 0$. This fact is crucial for us as  we will assume $\omega_0 = \omega(1)$ and $\gamma_0 \simeq \norm{\nabla A}_{L^\infty}$ are sufficiently small later on. Note that for convex domains, $\omega_0 = 0$; for constant coefficients, $\gamma_0 = 0$.

\subsection{Frequency function and three-ball inequality}

Let $\Omega$ be a Lipschitz domain and $A$ satisfy the standard assumptions. In this subsection, we further assume (up to an affine transformation) that $0\in \overline{\Omega}$ and $A(0) = I$. Note that under this normalization, the constants for $\Omega$ and $A$ will change. In particular, the new quasiconvexity modulus $\tilde{\omega}(r)$ will satisfy $\tilde{\omega}(r) \le \Lambda \omega (\Lambda^{\frac12} r)$ which is harmless up to a constant. Similarly $\tilde{\gamma} \le \Lambda \gamma$ and $\tilde{L} \le \Lambda L$.

Let $R>0$ and $u$ be an $A$-harmonic function in $B_{R,+} = B_R(0) \cap \Omega$ and satisfy the Dirichlet boundary condition $u = 0$ on $B_R(0) \cap \partial \Omega$. Define
\begin{equation*}
    \mu(x) = \frac{x\cdot A(x) x}{|x|^2} \quad \text{and} \quad H(r) = \int_{\partial B_r(0) \cap \Omega} \mu u^2 d\sigma.
\end{equation*}
Define
\begin{equation*}
    D(r) = \int_{B_{r,+}} A \nabla u\cdot \nabla u.
\end{equation*}
Define the frequency function of $u$ centered at $0$ as
\begin{equation}\label{def.Nu0r}
    \mathcal{N}_u(0,r) = \frac{r D(r)}{H(r)}.
\end{equation}

The following monotonicity property is standard. For the reader's convenience, we provide a proof in Appendix \ref{appendixA2}.
\begin{proposition}\label{prop.monotone}
    Under the above assumptions, if in addition,
    \begin{equation}\label{cond.A-starshaped}
        n(x)\cdot A(x)x \ge 0, \quad \text{for almost every } x\in B_R(0) \cap \partial\Omega,
    \end{equation}
    then there exists $C = C(\Lambda,d)>0$ such that $e^{C\gamma r} \mathcal{N}_u(0,r)$ is nondecreasing in $(0,R)$.
\end{proposition}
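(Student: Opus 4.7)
The plan is to execute the classical Almgren frequency monotonicity computation, adapted to variable coefficients and the Dirichlet boundary condition, and show that $\tfrac{d}{dr}\log\mathcal{N}_u(0,r)\geq -C\gamma$. Integrating this bound in $r$ immediately yields the monotonicity of $e^{C\gamma r}\mathcal{N}_u(0,r)$.

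First I would compute $H'(r)$. Applying the divergence theorem to the vector field $u^2 A(x)x$ on $B_{r,+}$, and using the Dirichlet condition $u|_{\partial\Omega}=0$ to kill the $\partial\Omega\cap B_r$ contribution, I obtain
\[
rH(r)=\int_{B_{r,+}}\bigl[u^2\operatorname{div}(Ax)+2u\,A\nabla u\cdot x\bigr]dx.
\]
Differentiating in $r$ (via $\tfrac{d}{dr}\int_{B_{r,+}}f=\int_{\partial B_r\cap\Omega}f\,d\sigma$) and integrating by parts once more to identify $\int_{\partial B_r\cap\Omega}u(A\nabla u\cdot\nu)d\sigma=D(r)$ (using $\mathcal{L}u=0$ and $u|_{\partial\Omega}=0$), combined with the Taylor estimates $\operatorname{div}(Ax)=d+O(\gamma r)$ and $\mu=1+O(\gamma r)$ (from $A(0)=I$ and the Lipschitz continuity of $A$), yields
\[
H'(r)=\frac{d-1}{r}H(r)+2D(r)+O(\gamma)H(r).
\]

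Next, for $D'(r)$ I would derive a Rellich--Pohozaev identity with the $A$-adapted multiplier $\beta(x)=A(x)x$: multiply $\mathcal{L}u=0$ by $\beta\cdot\nabla u$ and integrate by parts on $B_{r,+}$. The choice $\beta=Ax$ (rather than the naive $\beta=x$) is essential because on $\partial\Omega\cap B_r$, where $\nabla u=u_\nu n$ (the tangential gradient vanishes by the Dirichlet condition), the resulting boundary integrand collapses to $\tfrac12(n\cdot Ax)(An\cdot n)u_\nu^2$, which is \emph{nonnegative} exactly under the star-shapedness hypothesis~\eqref{cond.A-starshaped}. Discarding this favorable $\partial\Omega$ contribution, and using $D'(r)=\int_{\partial B_r\cap\Omega}(A\nabla u\cdot\nabla u)\,d\sigma$, after a bookkeeping calculation of the bulk terms (whose leading contribution is $\tfrac{d-2}{2}D(r)$, from $\operatorname{tr}(A)=d+O(\gamma r)$) and of the $\partial B_r\cap\Omega$ boundary terms, the identity reduces to
\[
D'(r)\geq\frac{d-2}{r}D(r)+2\int_{\partial B_r\cap\Omega}\mu^{-1}(A\nabla u\cdot\nu)^2\,d\sigma-O(\gamma)D(r),
\]
the $O(\gamma)$ error absorbing all derivatives of $A$.

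Finally, Cauchy--Schwarz applied to $D(r)=\int_{\partial B_r\cap\Omega}u(A\nabla u\cdot\nu)d\sigma$ with the weight $\mu$ gives $D(r)^2\leq H(r)\int_{\partial B_r\cap\Omega}\mu^{-1}(A\nabla u\cdot\nu)^2\,d\sigma$. Substituting the three pieces into $\tfrac{d}{dr}\log\mathcal{N}_u(0,r)=\tfrac{1}{r}+\tfrac{D'(r)}{D(r)}-\tfrac{H'(r)}{H(r)}$, the $1/r$ terms cancel exactly and the $2D/H$ cross terms cancel by Cauchy--Schwarz, leaving only the $O(\gamma)$ error, which is the claimed lower bound. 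The main technical subtlety I anticipate is the rigorous justification of the Rellich identity when $\partial\Omega$ is merely Lipschitz, so $\nabla u$ exists only in a nontangential a.e.\ sense and the boundary integration by parts on $\partial\Omega$ must be interpreted carefully; I would handle this either by approximating $\Omega$ from the inside by smoother subdomains (where the sign condition \eqref{cond.A-starshaped} is preserved under careful approximations) or by invoking the Rellich-type identities for $A$-harmonic functions on Lipschitz domains that are by now standard in the layer-potential literature.
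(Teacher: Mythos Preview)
Your proposal is essentially the paper's proof: compute $H'$, derive a Rellich--Pohozaev inequality for $D'$, and cancel via weighted Cauchy--Schwarz. One point needs correcting. With the multiplier $\beta=A(x)x$ as you state it, on $\partial B_r$ one has $\beta\cdot n=r\mu$ (not $r$), so the Rellich identity yields $r\int\mu\,(A\nabla u\cdot\nabla u)$ on the left and $2r\int(\partial_\nu u)^2$ (no $\mu^{-1}$) on the right; the resulting $(1+O(\gamma r))$ mismatch with the weight in $H$ survives Cauchy--Schwarz and leaves an error $-O(\gamma)\mathcal N$ in $(\log\mathcal N)'$ rather than $-O(\gamma)$, which is too weak. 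The paper fixes this by taking the normalized multiplier $w=A(x)x/\mu(x)$, for which $w\cdot n=r$ exactly on $\partial B_r$ and $\partial_j w_i=\delta_{ij}+O(\gamma|x|)$; this produces precisely your displayed $D'$-inequality with the $\mu^{-1}$ weight, after which Cauchy--Schwarz cancels exactly. So your target inequality for $D'$ is correct, but it requires $w$ rather than $Ax$. Your divergence-theorem derivation of $H'$ (from $rH(r)=\int_{B_{r,+}}\operatorname{div}(u^2Ax)$) is a clean alternative to the paper's route, which differentiates $H$ directly and then identifies $D(r)=\int_{\partial B_r\cap\Omega}\mu u\,\partial_r u\,d\sigma+O(\gamma)H(r)$ via an integration by parts on $\partial B_r$ along the tangential field $\tau=A\tfrac{x}{|x|}-\mu\tfrac{x}{|x|}$.
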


We say $B_{R,+}$ is $A$-starshaped with respect to $0$ if \eqref{cond.A-starshaped} holds. This additional geometric assumption on $\partial \Omega$ is necessary in the proof of Proposition \ref{prop.monotone} and will be removed later.

The relationship between $H(r)$ and the frequency function can be seen from
\begin{equation}\label{est.HD}
    |\frac{H'(r)}{H(r)} - {\frac{d-1}{r} }  - \frac{2}{r} \mathcal{N}_u(0,r) | \le C\gamma .
\end{equation}
The proof of this estimate is contained in the proof of Proposition \ref{prop.monotone}; see \eqref{HHHD}. 
Integrating \eqref{est.HD} over $r \in (r_1,r_2)$, we have
\begin{equation}\label{est.logHH}
    \Big| \log \frac{H(r_2)}{H(r_1)} - (d-1)\log \frac{r_2}{r_1} - 2\int_{r_1}^{r_2} \frac{\mathcal{N}_u(0,r)}{r} dr \Big| \le C\gamma r_2.
\end{equation}
Using  Proposition \ref{prop.monotone}, $\mathcal{N}_u(0,r) \le e^{C\gamma r_2} \mathcal{N}_u(0,r_2) \le e^{C\gamma r_3}\mathcal{N}_u(0,s) $ for any $r_1<r<r_2<s<r_3$. Thus, for any $r_1<r_2<r_3 < r_0$, it follows from \eqref{est.logHH} that
\begin{equation*}
    \log \frac{H(r_2)}{H(r_1)} \le \beta \log \frac{H(r_3)}{H(r_2)} + (d-1)\log \frac{r_2^{1+\beta}}{r_1 r_3^\beta} + C \gamma r_3,
\end{equation*}
where $\beta = e^{C\gamma r_3} \frac{\log (r_2/r_1)}{\log(r_3/r_2)}$. Then
\begin{equation*}
    H(r_2) \le \exp\Big( (d-1)\log \frac{r_2^{1+\beta}}{r_1 r_3^\beta} + C \gamma r_3 \Big) H(r_3)^{\frac{\beta}{1+\beta}} H(r_1)^{\frac{1}{1+\beta}}.
\end{equation*}
This is a three-sphere inequality. We would like to convert this into a three-ball inequality. The following argument, more or less, should be standard.
Replacing in the above inequality $r_j$ with $t r_j$ and $t\in (0,1)$, and then integrating in $t$ on (0,1), we arrive at
\begin{equation}\label{est.Htr}
\begin{aligned}
    & \int_0^1 H(tr_2) dt \\
    & \le \exp\Big( (d-1)\log \frac{r_2^{1+\beta}}{r_1 r_3^\beta} + C \gamma r_3 \Big) \int_0^1 H(t r_3)^{\frac{\beta}{1+\beta}} H(t r_1)^{\frac{1}{1+\beta}} dt \\
    & \le \exp\Big( (d-1)\log \frac{r_2^{1+\beta}}{r_1 r_3^\beta} + C \gamma r_3 \Big) \Big(\int_0^1 H(t r_3) dt \Big)^{\frac{\beta}{1+\beta}} \Big( \int_0^1 H(t r_1) dt \Big) ^{\frac{1}{1+\beta}},
\end{aligned}
\end{equation}
where we also used the H\"{o}lder inequality. Note that
\begin{equation*}
    \int_{B_{r,+}} \mu u^2 = \int_0^r H(s) ds = r \int_0^1 H(tr) dt.
\end{equation*}
It follows from \eqref{est.Htr} that
\begin{equation}\label{est.3ball.A=I}
    \log \frac{\int_{B_{r_2,+} } \mu u^2} { \int_{B_{r_1,+}} \mu u^2} \le \beta \log \frac{\int_{B_{r_3,+} } \mu u^2} { \int_{B_{r_2,+}} \mu u^2} +  d \log \frac{r_2^{1+\beta}}{r_3^\beta r_1} + C\gamma r_3.
\end{equation}
The last two terms on the right side will be small if $\gamma$ or $r_0$ is sufficiently small and $r_2/r_1$ is sufficiently close to $r_3/r_2$.

\subsection{Non-identity matrix}

This subsection is devoted to removing the assumption $A(0) = I$ in order to define the doubling index at any point. Suppose now $A(0) \neq I$. Since $A(0)$ is symmetric and positive definite, we can write $A(0) = \mathcal{O} D \mathcal{O}^T$, where $\mathcal{O}$ is an (constant) orthogonal matrix and $D$ is a (constant) diagonal matrix with entries contained in $[\Lambda^{-1}, \Lambda]$. Let $S = \mathcal{O} {D^{\frac12}} \mathcal{O}^{T}$. Thus $A(0)=  S^2$ (or formally $S = A^{\frac12}(0)$) and $S$ is also symmetric. Let $u$ be $A$-harmonic in $B_R(0) \cap \Omega$. Then we can normalize the matrix $A$ such that $A(0) = I$ by a change of variable $x \to S x$. Precisely, let 
\begin{equation*}
    \tilde{u}(x) = u(Sx) \quad \text{ and } \quad \tilde{A}(x) = S^{-1} A(Sx ) S^{-1}.
\end{equation*}
Then $\tilde{u}$ is $\tilde{A}$-harmonic in $S^{-1}(B_R\cap \Omega) = S^{-1}(B_R) \cap S^{-1}(\Omega)$. Obviously now $\tilde{A}(0) = I$.
Note that the $A$-starshape condition $\tilde{n}(x)\cdot \tilde{A}(x) x \ge 0$ on $S^{-1}(B_R\cap \partial \Omega)$ is equivalent to 
\begin{equation}\label{cond.Astarshape.new}
    n(x)\cdot A(x)A^{-1}(0)x \ge 0,\quad \text{on } B_R\cap \partial \Omega.
\end{equation}
This condition is consistent with \eqref{cond.A-starshaped} if $A(0) = I$ and invariant under linear transformations.

Let $\widetilde{\Omega} = S^{-1}(\Omega)$. We would like to find the right form of the doubling index under a change of variable. By definition of the weight $\tilde{\mu}$ corresponding to $\tilde{A}$ and the change of variable $y = Sx$,
    \begin{equation*}
    \begin{aligned}
        \int_{B_r(0) \cap \widetilde{\Omega}} \tilde{\mu} \tilde{u}^2 & = \int_{B_r(0) \cap \widetilde{\Omega}} \frac{x\cdot S^{-1}A(Sx) S^{-1}x }{|x|^2} (u(Sx))^2 dx \\
        & = |\det A(0)|^{-\frac12} \int_{A^{\frac12}(0)(B_r(0)) \cap {\Omega} } \frac{y \cdot A(0)^{-1} A(y) A(0)^{-1} y }{y\cdot A(0)^{-1} y} u(y)^2 dy.
    \end{aligned}
    \end{equation*}
Therefore, taking translations into consideration, we define
\begin{equation*}
    \mu(x_0,y) = \frac{(y-x_0) \cdot A(x_0)^{-1} A(y) A(x_0)^{-1} (y-x_0) }{(y-x_0)\cdot A(x_0)^{-1} (y-x_0)},
\end{equation*}
\begin{equation}\label{def.E}
    E(x_0,r) = x_0 + A^{\frac12}(x_0)(B_r(0)),
\end{equation}
and
\begin{equation}\label{def.J_u}
    J_u(x_0,r) = |\det A(x_0)|^{-\frac12} \int_{ E(x_0,r) \cap {\Omega} } \mu(x_0,y) u(y)^2 dy.
\end{equation}
Note that $\Lambda^d \ge \det A(x_0) \ge \Lambda^{-d}$ for any $x_0$ and $J_u(x_0,r)$ is nondecreasing in $r$. We point out that $J_u(x_0, r)$, invariant under affine transformations, is the right form of the weighted $L^2$ norm for defining doubling index.

In the next lemma, we show the continuous dependence of $J_u(x_0,r)$ on both $x_0$ and $r$. This will be useful when we shift the center of a doubling index from one point to a nearby point.
\begin{lemma}\label{lem.Jux1-x0}
    There exists $C = C(\Lambda,d)>0$ such that if $\theta = |x_0 - x_1| < r/C$, then
    \begin{equation}\label{est.Jux0-x1}
        (1- C \gamma \theta) J_u(x_1, r-C \theta) \le J_u(x_0, r) \le (1+C\gamma \theta) J_u(x_1,r+C \theta).
    \end{equation}
\end{lemma}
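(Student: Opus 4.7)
The plan is to compare $J_u(x_0, r)$ with $J_u(x_1, r')$ factor-by-factor, for $r' = r \pm C\theta$. The three pieces---the ellipsoidal domain $E(\cdot, r)$, the weight $\mu(\cdot, y)$, and the prefactor $|\det A(\cdot)|^{-1/2}$---each produce at most a multiplicative error of size $1 + O(\gamma\theta)$, with the radius enlargement of $C\theta$ absorbing any purely geometric mismatch of the ellipsoids. Combining them directly yields \eqref{est.Jux0-x1}.

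The domain inclusion $E(x_0, r) \subset E(x_1, r + C\theta)$ and its symmetric counterpart are straightforward. Writing $y = x_0 + A^{1/2}(x_0) w = x_1 + A^{1/2}(x_1) w'$ gives $w' = A^{-1/2}(x_1)(x_0 - x_1) + A^{-1/2}(x_1) A^{1/2}(x_0) w$; using $|x_0 - x_1| = \theta$ together with Lipschitz continuity of the matrix square root for elliptic matrices (which yields $\|A^{1/2}(x_0) - A^{1/2}(x_1)\| \le C\gamma\theta$) gives $|w'| \le |w| + C\theta$, under the standing smallness assumption on $\gamma r$. The determinant comparison $|\det A(x_0)|^{-1/2} = (1 + O(\gamma\theta))|\det A(x_1)|^{-1/2}$ is immediate from Lipschitz continuity of $A$ and boundedness of $\det A$ in $[\Lambda^{-d}, \Lambda^d]$.

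The main technical step is the uniform pointwise bound $|\mu(x_0, y) - \mu(x_1, y)| \le C\gamma\theta$. I would rewrite
\begin{equation*}
\mu(x, y) - 1 = \hat{w}(x) \cdot A^{-1/2}(x) \bigl[ A(y) - A(x) \bigr] A^{-1/2}(x) \hat{w}(x),
\end{equation*}
where $w(x) = A^{-1/2}(x)(y - x)$ and $\hat{w} = w/|w|$, and then differentiate in $x$. Although $\partial_x \hat{w}(x)$ has norm of order $1/|y-x|$, it is always paired with the matrix $A^{-1/2}(x)[A(y) - A(x)] A^{-1/2}(x)$ of norm $O(\gamma|y-x|)$, so this product contributes a bounded $O(\gamma)$; the remaining terms, from differentiating the smooth matrix factors, are $O(\gamma)$ directly. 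Hence $|\partial_x \mu(x, y)| \le C\gamma$ uniformly in $y$, and the mean value theorem yields the claim. Since $\mu \ge c(\Lambda) > 0$, this pointwise bound is equivalent to $(1 - C\gamma\theta)\mu(x_1, y) \le \mu(x_0, y) \le (1 + C\gamma\theta)\mu(x_1, y)$.

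Assembling the three estimates and using the nonnegativity of $\mu u^2$, one obtains
\begin{equation*}
J_u(x_0, r) \le |\det A(x_0)|^{-1/2} (1 + C\gamma\theta) \int_{E(x_1, r + C\theta)\cap\Omega} \mu(x_1, y) u^2 \, dy \le (1 + C\gamma\theta)^2 J_u(x_1, r + C\theta),
\end{equation*}
which yields the upper bound in \eqref{est.Jux0-x1} after absorbing constants; the lower bound is symmetric. The main obstacle is the pointwise weight bound just described: a naive decomposition into a matrix-change term and a direction-change term produces a singular contribution of size $O(\theta/|y - x_0|)$ that would only give $O(\gamma r)$ in place of $O(\gamma\theta)$. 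The cancellation that saves us is that this singular direction-change is always multiplied by $A(y) - A(x) = O(\gamma|y-x|)$, so the product stays bounded; recognizing this structural cancellation is the heart of the lemma.
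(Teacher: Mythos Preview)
Your proposal is correct and follows essentially the same route as the paper's proof: compare the three ingredients (ellipsoidal domain, weight $\mu$, determinant prefactor) separately, using Lipschitz continuity of $A^{1/2}$ for the domain inclusion and the key identity $\mu(x,y)-1 = (\text{unit vector}) \cdot A^{-1/2}(x)[A(y)-A(x)]A^{-1/2}(x)(\text{unit vector})$ to exhibit the cancellation that gives $|\nabla_x \mu(x,y)| \le C\gamma$. The paper writes this identity as $\mu(x_0,y) = 1 + \frac{(y-x_0)\cdot A(x_0)^{-1}(A(y)-A(x_0))A(x_0)^{-1}(y-x_0)}{(y-x_0)\cdot A(x_0)^{-1}(y-x_0)}$ and simply asserts the gradient bound, whereas you spell out the mechanism of the cancellation between $\partial_x\hat{w} = O(|y-x|^{-1})$ and $A(y)-A(x) = O(\gamma|y-x|)$; this is a welcome clarification of a point the paper leaves implicit.
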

\begin{proof}
    The proof follows from the Lipschitz continuity of $\mu(x_0 ,y)$ and $A^{\frac12}(x_0)$ in $x_0$. Note that
    \begin{equation*}
        \mu(x_0, y) = 1 + \frac{(y-x_0) \cdot A(x_0)^{-1} (A(y)-A(x_0)) A(x_0)^{-1} (y-x_0) }{(y-x_0)\cdot A(x_0)^{-1} (y-x_0)}.
    \end{equation*}
    Then fixing $y$, one can show that $|\nabla_{x_0} \mu(x_0,y) | \le C\gamma.$ Thus,
    \begin{equation*}
        |\mu(x_0,y) - \mu(x_1,y)| \le C\gamma \theta \mu(x_1,y),
    \end{equation*}
    which yields
    \begin{equation}\label{est.mux0-x1}
        \mu(x_0,y) \le (1+C\gamma \theta) \mu(x_1,y),
    \end{equation}
    provided that $\theta$ is suffciently small so that $C\gamma \theta < 1/2$.

    On the other hand, by the power series, the unique square root of $A$ can be written as
    \begin{equation}\label{eq.A1/2}
        A^{\frac12}(x_0) = \Lambda^{\frac12}  \sum_{n=0}^\infty \binom{1/2}{n} (-1)^n \big(I - \Lambda^{-1}A(x_0) \big)^n ,
    \end{equation}
    where $\binom{1/2}{n}$ are the binomial coefficients in the Taylor series $(1+X)^{1/2} = \sum_{n=0}^\infty \binom{1/2}{n} X^n$.
    The power series \eqref{eq.A1/2} converges, in view of the Gelfand formula, since the all the eigenvalues of $I - \Lambda^{-1} A(x_0)$ are contained in $[0, 1-\Lambda^{-2}]$. By taking derivatives in $x_0$, we see that $|\nabla A^{\frac12}| \le C|\nabla A| \le C\gamma$, where $C$ depends only on $d$ and $\Lambda$. By choosing $\gamma$ to be small enough if necessary, we have
    \begin{equation}\label{eq.Ex0-x1}
        E(x_0, r) \subset E(x_1, r(1+C_0\gamma \theta) + {C_1}\theta) \subset E(x_1, r+ C \theta).
    \end{equation}
    Moreover,
    \begin{equation*}
        |\det A(x_0)^{\frac12} - \det A(x_1)^{\frac12}| \le C\gamma \theta \le C\gamma \theta \det A(x_1)^{\frac12}.
    \end{equation*}
    This implies
    \begin{equation}\label{est.detx0-x1}
        \det A(x_0)^{\frac12} \le (1+C\gamma \theta) \det A(x_1)^{\frac12}.
    \end{equation}
    It follows from \eqref{est.mux0-x1}, \eqref{eq.Ex0-x1} and \eqref{est.detx0-x1} that
    \begin{equation*}
    \begin{aligned}
        J_u(x_0,r) & = |\det A(x_0)|^{-\frac12} \int_{ E(x_0,r) \cap {\Omega} } \mu(x_0,y) u(y)^2 dy \\
        & \le (1+C\gamma \theta)^2 |\det A(x_1)|^{-\frac12} \int_{ E(x_1,r+C\theta) \cap {\Omega} } \mu(x_1,y) u(y)^2 dy \\
        & \le (1+C\gamma \theta) J_u(x_1, r+C\theta).
    \end{aligned}
    \end{equation*}
    This proves one side of \eqref{est.Jux0-x1} and the other side follows from symmetry of the inequality.
\end{proof}

\subsection{Remove $A$-starshape condition}

In the proof of the monotonicity of frequency function, we require the $A$-starshape condition \eqref{cond.Astarshape.new} on the boundary, instead of quasiconvexity. To remove this geometric condition, the classical way is to introduce a nonlinear change of variables, see \cite{AE97} or \cite{KZ21} for example. In this paper, however, we will use a different approach in which the quasiconvexity of the domain will play an essential role. The key idea of our approach is that if $\Omega \cap B_r$ is close to be convex, we can recover the $A$-starshape condition if we shift the center from the boundary to an interior point, which is still relatively close to the boundary. This allows us to prove an almost monotonicity formula for the frequency function with small errors.

By a rigid transformation, we may assume $0\in \partial \Omega$ and the local graph of $B_r(0) \cap \partial \Omega$ is given by $x_d = \phi(x')$ such that $B_r(0) \cap \Omega = B_r(0) \cap \{ x_d > \phi(x') \}$, whose support plane at $0$ is $x_d = 0$.

\begin{lemma}\label{lem.Astarshape.x1}
    There exists $C = C(L,\Lambda)>0$ such that if $r_0$ satisfies $C({\omega}(2r_0) + {\gamma} r_0) \le 1$, then
    for every $r\in (0,r_0)$, $\Omega \cap B_r(x_1)$ is $A$-starshaped with respect to its center $x_1 = C ( r {\omega}(2r) + {\gamma} r^2) e_d$, namely,
    \begin{equation}\label{cond.A-starshaped-1}
        n(x)\cdot A(x)A^{-1}(x_1)(x - x_1) \ge 0, \quad \text{for almost every } x\in \partial \Omega \cap B_{r}(x_1).
    \end{equation}
\end{lemma}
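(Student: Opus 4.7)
The plan is to reduce the $A$-starshape condition to a geometric inequality controlled by the quasiconvexity modulus $\omega$, plus a coefficient perturbation controlled by $\gamma$. After a rigid transformation so that $0\in\partial\Omega$ and $\partial\Omega\cap B_{r_0}(0)$ is the graph $x_d=\phi(x')$ with $\phi(0)=0$, for $x=(x',\phi(x'))\in\partial\Omega\cap B_r(x_1)$ the outward unit normal is
\[
n(x)=\frac{(\nabla\phi(x'),-1)}{\sqrt{1+|\nabla\phi(x')|^2}}.
\]
With $x_1=he_d$ and the candidate $h=C(r\omega(2r)+\gamma r^2)$, I would decompose
\[
n(x)\cdot A(x)A^{-1}(x_1)(x-x_1) = n(x)\cdot(x-x_1) + n(x)\cdot\bigl[A(x)A^{-1}(x_1)-I\bigr](x-x_1).
\]
The Lipschitz continuity of $A$ together with $|A^{-1}(x_1)|\le\Lambda$ gives $|A(x)A^{-1}(x_1)-I|\le C\gamma|x-x_1|$, so the second term is at worst $-C\gamma|x-x_1|^2\ge -C\gamma r^2$ and is absorbed by the $\gamma r^2$ part of $h$. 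The first term equals $[\nabla\phi(x')\cdot x'-\phi(x')+h]/\sqrt{1+|\nabla\phi(x')|^2}$, so the problem reduces to the geometric inequality
\[
\phi(x')-\nabla\phi(x')\cdot x'\le Cr\omega(2r)\quad\text{for a.e. }|x'|<r.
\]

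The key idea for this inequality is to invoke the quasiconvexity at the boundary point $x$ itself, rather than at the reference origin $0$. Let $R_x$ be the rotation sending the inward normal $-n(x)$ to $e_d$; in the rotated frame centred at $x$, the boundary is a Lipschitz graph $\phi_x$ satisfying $\phi_x(0)=0$, $\nabla\phi_x(0)=0$, and, by the quasiconvexity \emph{at $x$}, $\phi_x(z')\ge -|z'|\omega(|z'|)$. Since $0\in\partial\Omega$, the point $R_x(-x)$ lies on the graph $\phi_x$. A direct computation, using $R_x^T e_d=-n(x)$, gives
\[
\bigl(R_x(-x)\bigr)\cdot e_d = -n(x)\cdot(-x) = n(x)\cdot x = \frac{\nabla\phi(x')\cdot x'-\phi(x')}{\sqrt{1+|\nabla\phi(x')|^2}},
\]
while the horizontal part $z'$ of $R_x(-x)$ satisfies $|z'|\le|x|\le|x'|\sqrt{1+L^2}\le C(L)r$. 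Inserting this into $\phi_x(z')\ge -|z'|\omega(|z'|)$ and using the monotonicity of $\omega$ together with the smallness of $\omega(2r_0)$ furnished by the hypothesis $C(\omega(2r_0)+\gamma r_0)\le 1$ (which lets the $L$-dependent factor be absorbed into a new constant) yields the required bound $\phi(x')-\nabla\phi(x')\cdot x'\le Cr\omega(2r)$.

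Combining the two estimates with $C=C(L,\Lambda)$ chosen sufficiently large in $h=C(r\omega(2r)+\gamma r^2)$ produces $n(x)\cdot A(x)A^{-1}(x_1)(x-x_1)\ge 0$ at a.e.\ $x\in\partial\Omega\cap B_r(x_1)$; the smallness hypothesis also ensures $x_1\in\Omega$ and that the graph parametrization covers the whole boundary patch. The main obstacle is the geometric step. Applying the quasiconvexity only at $0$ controls $\phi(x')$ from below but leaves $\nabla\phi(x')\cdot x'$ essentially unbounded (it can a priori be as negative as $-Lr$), which is far too large to absorb. The crucial observation is that rotating to the variable point $x$ and identifying where the reference point $0$ sits on the flattened graph at $x$ produces precisely the linear combination $\nabla\phi(x')\cdot x'-\phi(x')$ that appears in $n(x)\cdot(x-x_1)$, and this combination is exactly what the quasiconvexity at $x$ controls. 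For convex domains the statement is trivial, since the tangent plane at $x$ lies globally below the graph and hence $\phi(x')-\nabla\phi(x')\cdot x'\le\phi(0)=0$; the quasiconvex case is a quantitative perturbation of this observation at the cost of an error of size $r\omega(2r)$.
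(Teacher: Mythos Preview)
Your approach is essentially identical to the paper's: both apply the quasiconvexity hypothesis at the variable boundary point $x$ (not at the origin) to bound $n(x)\cdot x$, then handle the coefficient perturbation $A(x)A^{-1}(x_1)-I$ via the Lipschitz bound on $A$, and finally choose the height $h$ to beat both error terms. The paper phrases the geometric step using the alternative half-space form of quasiconvexity (the inequality $(x_0-0)\cdot n\ge -t\omega(t)$ with $t=|(I-n\otimes n)x_0|$), while you set up the rotation $R_x$ explicitly; these are the same computation.

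There is one slip. You bound $|z'|\le|x|\le|x'|\sqrt{1+L^2}\le C(L)r$ and then assert that the $L$-dependent factor inside $\omega$ can be ``absorbed'' by the smallness hypothesis. This is not justified: $\omega$ is merely nondecreasing with $\omega(0^+)=0$, and no doubling property is assumed, so in general $\omega(C(L)r)$ is \emph{not} controlled by a constant times $\omega(2r)$. The fix is to use the sharper and more direct bound
\[
|z'|\le|x|\le |x-x_1|+|x_1|< r+h\le 2r,
\]
which follows from $x\in B_r(x_1)$ and $h=C(r\omega(2r)+\gamma r^2)\le r$ (the latter being exactly what the smallness hypothesis $C(\omega(2r_0)+\gamma r_0)\le 1$ guarantees). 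With $|z'|<2r$ you get $\omega(|z'|)\le\omega(2r)$ directly, and the rest of your argument goes through unchanged.
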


\begin{proof}
    Consider a point $x_0 \in \partial \Omega \cap B_{2r}(0)$ such that the normal $n = n(x_0)$ exists. By the definition of the quasiconvexity, the support plane at $x_0$, can be written as $x_d = P(x')$ whose downward normal direction is $n$. The length of the projection of vector $x_0$ onto the support plane is $t = |(I - n\otimes n)x_0| \le |x_0|$. By the definition of quasiconvexity centered at $x_0$ (up to a rotation), we have
    \begin{equation}\label{cond.qc-x0-0}
        (x_0 - 0)\cdot n \ge - t {\omega}(t). 
    \end{equation}
    
    Now, we would like to find $k = k(r)>0$ (independent of $x_0$) such that $x_1 = ke_d$ and
    \begin{equation}\label{cond.nAx0-ked}
        n\cdot A(x_0)A^{-1}(x_1) (x_0 - x_1) \ge 0.
    \end{equation}
    In fact, using \eqref{cond.qc-x0-0}, $A(x_1)A^{-1}(x_1)= I$ and $|A(x_0) - A(x_1)| \leq {\gamma} |x_0-x_1|$, we have
    \begin{equation}\label{est.Star.x0x1}
    \begin{aligned}
        n\cdot A(x_0)A^{-1}(x_1) (x_0 - x_1) & \ge n\cdot (x_0 - x_1) - {\gamma} \Lambda |x_0 - x_1|^2 \\
        & \ge -t{\omega}(t) - k n_d - {\gamma} \Lambda (|x_0|+ k)^2
    \end{aligned}    
    \end{equation}
        Since $t\le |x_0|<2r $ and $n_d = \frac{-1}{\sqrt{1+|\nabla P|^2}} \le \frac{-1}{\sqrt{1+{L}^2} } \le \frac{-1}{1+{L}}$, where ${L} \ge |\nabla P|$ is the {Lipschitz constant of $\partial \Omega \cap B_{r_0}(0)$}, we obtain
        \begin{equation}\label{est.star.new}
            n\cdot A(x_0)A^{-1}(x_1) (x_0 - x_1) \ge \frac{k}{1+L} - 2r\omega(2r)-\gamma\Lambda (2r+k)^2.
        \end{equation}
        We would like $B_r(x_1)$ to contain a part of the boundary including the origin, which means $k\le r$.
        Note that the right-hand side of \eqref{est.Star.x0x1} is independent of $x_0 \in \partial \Omega \cap B_{2r}(0)$. Now, choosing
        \begin{equation*}
            k = {9} \Lambda (1+{L}) ( r {\omega}(2r) + {\gamma} r^2),
        \end{equation*}
        we obtain the desired result \eqref{cond.nAx0-ked} from \eqref{est.star.new} for almost every point on $B_{2r}(0) \cap \partial \Omega$. Therefore we pick $C = 9\Lambda(1+{L})$ and $x_1 = C( r{\omega}(2r) + {\gamma} r^2) e_d$ in the statement of the lemma. Finally it suffices to make sure that $k\le r$ and $B_{r}(x_1) \cap \partial \Omega \subset B_{2r}(0) \cap \partial \Omega$. Clearly this is satisfied if $r_0$ is small enough such that $C({\omega}(2r_0) +  {\gamma} r_0) \le 1$.
\end{proof}

Given $r>0$ and $x_1$ as defined in Lemma \ref{lem.Astarshape.x1} and by \eqref{est.3ball.A=I}, if $r_1<r_2<r_3<r<r_0 $,
\begin{equation}\label{est.doubling-1}
    \log \frac{J_u(x_1,r_2)} { J_u(x_1,r_1) } \le \beta \log \frac{J_u(x_1,r_3)} { J_u(x_1,r_2)} + d\log \frac{r_2^{1+\beta}}{r_3^\beta r_1} + C\gamma r_3.
\end{equation}
Now, we would like to obtain a monotonicity formula of the doubling index centered at $0$. 
By Lemma \ref{lem.Jux1-x0}, for any $s<r/2$ and $s> \theta(r):= C(r{\omega}(2r) + \gamma r^2)$,
\begin{equation}\label{est.doubling-2}
    (1- \theta(r) ) J_u(0,s-\theta(r) ) \le J_u(x_1,s) \le (1+ \theta(r) ) J_u(0,s+\theta(r) ).
\end{equation}
Combining \eqref{est.doubling-1} and \eqref{est.doubling-2}, we obtain
\begin{equation*}
\begin{aligned}
    \log \frac{J_u(0,r_2-\theta)} { J_u(0,r_1+\theta) } & \le \beta \log \frac{J_u(0,r_3+\theta)} { J_u(0,r_2-\theta) }  \\
    & \qquad + (1+\beta) \log \frac{1+\theta}{1-\theta} + d\log \frac{r_2^{1+\beta}}{r_3^\beta r_1} + C\gamma r_3.
\end{aligned}
\end{equation*}
Without loss of generality, assume that $\theta = \theta(r) < \frac{r}{20}$. Let
\begin{equation*}
    r_1 = \frac{1}{8} r -\theta, \quad r_2 = \frac{1}{4} r + \theta, \quad r_3 = \frac{1}{2} r - \theta.
\end{equation*}
It follows that $|\beta - 1| \le C(\gamma r + \theta/r) \le C(r+{\omega}(2r))$ and
\begin{equation*}
    (1+\beta) \log \frac{1+\theta}{1-\theta} + d\log \frac{r_2^{1+\beta}}{r_3^\beta r_1} + C\gamma r_3 \le C(r+{\omega}(2r)).
\end{equation*}
Consequently, we obtain
\begin{equation*}
    \log \frac{ J_u(0,\frac{r}{4})} { J_u(0,\frac{r}{8})}  \le (1+ C(\gamma r + {\omega}(2r)) ) \log \frac{J_u(0,\frac{r}{2}) } { J_u(0,\frac{r}{4}) } + C(\gamma r + {\omega}(2r)).
\end{equation*}

Define the doubling index by
\begin{equation*}
    N_u(x_0,r) = \log \frac{J_u(x_0,2r) } { J_u(x_0,r) }.
\end{equation*}
Then we have
\begin{lemma}\label{lem.Nu.A=I}
    Let $x_0 \in \overline{\Omega}$. Then there exist $C = C(\Lambda, L)>0$ and $r_0 = r_0(\Lambda,L, \gamma, \omega)>0$ (as in Lemma \ref{lem.Astarshape.x1}) such that for $r<r_0$
    \begin{equation}\label{est.AmostMono.Nu}
    N_u(x_0,r) \le (1+ C(\gamma r + \omega(16 r))) N_u(x_0,2r) + C(\gamma r + \omega(16 r)).
\end{equation}
\end{lemma}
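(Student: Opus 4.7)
The plan is to apply the chain of estimates already developed in the paragraph immediately preceding the lemma statement, supplemented by two standard reductions.

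First, using Facts 1 and 2 of Section 3 together with the affine invariance of $J_u$ built into its definition \eqref{def.J_u}, I apply an affine change of coordinates that sends $x_0$ to the origin and transforms $A(x_0)$ to the identity. This preserves the quasiconvexity of $\Omega$ up to a bounded change in the modulus $\omega$ and in the constants $L, \gamma$, and leaves $N_u$ invariant. If $x_0 \in \partial \Omega$, I additionally rotate so that the local boundary graph is $x_d = \phi(x')$ with support plane $\{x_d = 0\}$ at the origin, placing us in the setup required by Lemma \ref{lem.Astarshape.x1}.

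For the case $x_0 \in \partial \Omega$, the derivation presented just before the lemma already does the work: writing the displayed three-ball-type estimate in terms of $N_u$ via the substitution $r \mapsto 8\rho$ recovers \eqref{est.AmostMono.Nu}, after absorbing numerical factors (in particular the factor $8$ multiplying $\gamma \rho$) into the generic constant $C$. Accordingly, the admissible upper bound on $\rho$ becomes a fixed constant multiple of the $r_0$ furnished by Lemma \ref{lem.Astarshape.x1}.

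For $x_0 \in \Omega$, two subcases remain. If $\dist(x_0, \partial \Omega) \ge 4r$, then $B_{4r}(x_0) \subset \Omega$ and no boundary condition is active: the derivation of \eqref{est.3ball.A=I} applies at $x_0$ directly with no need for the auxiliary center $x_1$, yielding the stronger interior bound
\[
N_u(x_0,r) \le (1+C\gamma r)\, N_u(x_0,2r) + C\gamma r,
\]
which implies \eqref{est.AmostMono.Nu}. Otherwise, pick $z \in \partial \Omega$ with $|z - x_0| < 4r$ and apply the boundary case at $z$ for the relevant radii. The continuous-dependence estimate Lemma \ref{lem.Jux1-x0} then transfers $J_u(z,\cdot)$ back to $J_u(x_0,\cdot)$ at comparable scales with only a multiplicative error $(1 \pm C\gamma |z-x_0|)$ and an additive radius shift $C|z-x_0|$; since $|z-x_0| \lesssim r$, these corrections can be absorbed into the factor $1 + C(\gamma r + \omega(16r))$ without changing the form of the inequality.

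The main obstacle is this last bookkeeping step: the center-shift through Lemma \ref{lem.Jux1-x0} must be applied at two different radii ($r$ and $2r$) and compared inside a logarithm, so one has to verify that the resulting perturbations do not accumulate beyond the tolerance allowed by \eqref{est.AmostMono.Nu}. The quantitative input that makes this work is that, once $r_0$ is chosen small in terms of $\gamma$ and $\omega$ as in Lemma \ref{lem.Astarshape.x1}, the radius shifts $C|z-x_0|$ can be made strictly smaller than any fixed fraction of $r$, so every error term is already of the form $C(\gamma r + \omega(16r))$ and can be swallowed by the generic constant.
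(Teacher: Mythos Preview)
Your treatment of the boundary case $x_0 \in \partial\Omega$ and the deep interior case $\dist(x_0,\partial\Omega)\ge 4r$ is fine and matches the paper. The gap is in the remaining case, interior points close to the boundary.

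When $\dist(x_0,\partial\Omega)<4r$ you propose to run the boundary argument at a nearby $z\in\partial\Omega$ and then transfer back to $x_0$ through Lemma~\ref{lem.Jux1-x0}. But the radius shift in that lemma is $C\theta$ with $\theta=|z-x_0|$ and $C=C(\Lambda,d)$ a fixed structural constant; here $\theta$ can be as large as $4r$, so the radius shift is of order $r$, not $o(r)$. Your claim that ``the radius shifts $C|z-x_0|$ can be made strictly smaller than any fixed fraction of $r$'' by shrinking $r_0$ is false: nothing in the choice of $r_0$ affects the ratio $|z-x_0|/r$, which stays bounded away from zero in this case. An $O(r)$ shift inserted into the quotient $J_u(x_0,2r)/J_u(x_0,r)$ does not produce an error of size $C(\gamma r+\omega(16r))$; it can change the radii by a fixed multiplicative factor and hence the doubling index by an amount comparable to $N_u$ itself.

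The way through is not to detour via $z$ but to redo the $A$-starshape verification of Lemma~\ref{lem.Astarshape.x1} directly for the interior center. Since $x_0$ already sits above the local support plane at its nearest boundary point, inspection of estimate \eqref{est.star.new} shows that either $x_0$ itself satisfies \eqref{cond.A-starshaped-1}, or one needs at most the same inward shift $\theta(r)=C(r\omega(2r)+\gamma r^2)$ used in the boundary case. That shift is $o(r)$, so Lemma~\ref{lem.Jux1-x0} then produces only the admissible errors and the rest of the derivation goes through unchanged.
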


\begin{remark}\label{rmk.doubling}
    For any $\e>0$, if $r$ is small enough (depending only on $\Lambda, \gamma, L$ and $\omega$), then
\begin{equation*}
    N_u(x_0,r) \le (1+ \e) N_u(x_0,2r) + \e.
\end{equation*}
Or equivalently,
\begin{equation*}
    N_u(x_0,r) + 1 \le (1+ \e) ( N_u(x_0,2r) + 1).
\end{equation*}
This is a useful version of almost monotonicity for the doubling index.
\end{remark}

\begin{remark}
    The argument in this section actually gives stronger estimates if provided stronger assumptions on the domain or the coefficients. For example, if $\omega(r)$ satisfies a Dini-type condition, then one can show
\begin{equation*}
    \sup_{0<s<r} N_u(x_0,s) \le C ( N_u(x_0,r) + 1).
\end{equation*}
If $\Omega$ is convex (i.e., $\omega(r) = 0$) or $C^{1,1}$ (i.e., $\omega(r) \le Cr$), then one can show
\begin{equation*}
    \sup_{0<s<r} N_u(x_0,s) \le (1+C\gamma r) N_u(x_0,r) + C\gamma r.
\end{equation*}
If $A$ is constant (i.e., $\gamma = 0$) and $\Omega$ is convex, then the above inequality recover the precise doubling inequality for harmonic functions over convex domains, i.e., $N(x_0, r)$ is nondecreasing in $r$. 
\end{remark}

\section{Drop of maximal doubling index}

The main result of this section is Lemma \ref{lem.dropN}.

\subsection{Quantitative Cauchy uniqueness}

We first introduce a quantitative Cauchy uniqueness over Lipschitz domains. A different version of Cauchy uniqueness over Lipschitz boundary was proved in \cite{ARRV09}, which unfortunately is not suitable for our purpose as we do not have the estimate for tangential derivatives on the boundary and
our estimate is $L^\infty$-based.

We briefly recall the regularity theory of elliptic equation over Lipschitz domain; see \cite{K94,KS11} and references therein. Assume that $A$ is H\"{o}lder continuous and let $u$ be an $A$-harmonic function in $\Omega\cap B_2$ and $u|_{\partial \Omega\cap B_2} \in H^1(\partial \Omega\cap B_2)$ in the sense of trace. Then the nontangential maximal function $(\nabla u)^*|_{\partial \Omega\cap B_1} \in L^2(\partial \Omega\cap B_1)$. This particularly implies that for almost every $x\in \partial \Omega\cap B_1$, $\nabla u(y) \to \nabla u(x)$ as $\Omega \ni y\to x$ nontangentially. Therefore, $\frac{\partial u}{\partial \nu} = n\cdot A\nabla u$ exists almost everywhere. This fact will justify the following lemma and its proof.
\begin{lemma}\label{lem.Cauchy}
    Let $\Omega$ be a Lipschitz domain and $0\in \partial \Omega$. There exists $0<\tau = \tau(d,L,A)<1$ such that if $u$ is an $A$-harmonic function in $B_{1,+}$ satisfying $\norm{u}_{L^2(B_{1,+})} = 1$ and $|u| + |n\cdot A\nabla u| \le \e \le 1$ on $B_1 \cap \partial \Omega$, then
    \begin{equation*}
        \norm{u}_{L^\infty(B_{1/2,+})} \le C\e^\tau,
    \end{equation*}
    where $C$ depends only on $d, L$ and $A$.
\end{lemma}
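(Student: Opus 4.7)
The strategy is to flatten the Lipschitz boundary by a bi-Lipschitz change of variables and then derive the bound from a boundary Carleman estimate whose boundary term precisely captures the smallness of the Cauchy data.

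The first step is the flattening. Represent $\partial\Omega \cap B_1$ as a Lipschitz graph $x_d = \phi(x')$ and apply the map $\Psi(x',x_d) = (x', x_d - \phi(x'))$, which sends $B_{1,+}$ onto a domain containing a flat half-ball $\tilde B_{c,+} = B_c \cap \{x_d > 0\}$ for some $c = c(d,L) > 0$. The pulled-back function $\tilde u = u\circ\Psi^{-1}$ solves a uniformly elliptic divergence-form equation $\tilde{\mathcal L}\tilde u = 0$ with bounded measurable coefficients $\tilde A$, and a direct computation (tracking how $\nu$ and $A\nabla u$ transform under $\Psi$) shows that the Cauchy data transform into $|\tilde u| + |\partial\tilde u/\partial\nu_{\tilde A}| \le C\e$ on the flat face $\{x_d = 0\} \cap B_c$. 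It suffices to establish $\norm{\tilde u}_{L^\infty(\tilde B_{c/2,+})}\le C\e^\tau$ and pull back.

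The second step is a boundary Carleman estimate on $\tilde B_{c,+}$, using a weight of the form $\varphi(x) = -\log|x - x^*|$ with $x^* = -he_d$ sitting slightly below the flat face, so that the level sets of $\varphi$ are spheres centered at $x^*$ that are pseudoconvex with respect to $\tilde{\mathcal L}$. Applied to $v = \chi \tilde u$ with a smooth cutoff $\chi$ supported in $B_c$ and equal to $1$ on $B_{c/2}$, the inequality yields three contributions on the right-hand side: the commutator $[\tilde{\mathcal L}, \chi]\tilde u$ supported in $B_c\setminus B_{c/2}$ (bounded by $\norm{\tilde u}_{H^1(B_c)}^2 e^{2\tau\max_{B_c\setminus B_{c/2}}\varphi}$), a boundary term involving the Cauchy data on the flat face (bounded by $C\e^2 e^{2\tau\max_{\{x_d=0\}}\varphi}$), and zero interior error since $\tilde{\mathcal L}\tilde u = 0$. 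Choosing $\tau \sim \log(1/\e)$ to balance these against the interior gain $e^{2\tau\min_{\tilde B_{c/4,+}}\varphi}$ from the left-hand side yields $\norm{\tilde u}_{L^2(\tilde B_{c/4,+})} \le C\e^{\tau_0}$ for some $\tau_0 = \tau_0(d,L,A)\in(0,1)$ depending on the spread of $\varphi$ among these regions. A final boundary Moser estimate, using that $|\tilde u| \le \e$ on the flat face, upgrades this to $\norm{\tilde u}_{L^\infty(\tilde B_{c/8,+})} \le C\e^{\tau_0}$; pulling back via $\Psi$ and covering $B_{1/2,+}$ by finitely many such Carleman balls yields the lemma.

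\textbf{Main obstacle.} The core technical difficulty is establishing the Carleman estimate for a divergence-form equation with bounded measurable coefficients $\tilde A$, a regularity that results from the Lipschitz flattening even though the original $A$ is Lipschitz. Classical Carleman inequalities are proved for at least $C^1$ or H\"older coefficients; here one needs a version valid for bounded measurable coefficients in divergence form, which is available via, e.g., the approach of Koch--Tataru, but requires careful handling of the lower-order terms that appear upon integration by parts. A further subtlety is verifying the correct sign of the boundary terms on the flat face, so that the Cauchy data smallness actually enters the inequality with the right sign to control the interior $L^2$ norm; this is ensured by the pseudoconvex spherical geometry of the level sets of $\varphi$ together with the ellipticity of $\tilde A$.
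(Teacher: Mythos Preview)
Your approach via boundary Carleman estimates is genuinely different from the paper's proof, which proceeds through a Green representation and an \emph{interior} three-ball inequality. Specifically, the paper extends $A$ from $B_{1,+}$ to all of $B_1$, constructs the Dirichlet Green function $G$ in $B_1$, and writes $u = u_1 + u_2$ where $u_1$ is the layer-potential contribution from $\Gamma_1 = B_1\cap\partial\Omega$ and $u_2$ from $\Gamma_2 = \partial B_1\cap\Omega$. The smallness of the Cauchy data on $\Gamma_1$ together with the pointwise Green-function bounds gives $\|u_1\|_{L^2(B_{1,\pm})} \le C\e$ directly. Since the same representation yields $u_1+u_2 \equiv 0$ on $B_{1,-}$, one gets $\|u_2\|_{L^2(B_{1,-})} \le C\e$ as well. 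But $u_2$ is $A$-harmonic in the \emph{full} ball $B_1$, so the standard interior three-ball inequality propagates this smallness from $B_{1/2,-}$ to $B_{3/4}$, giving $\|u_2\|_{L^2(B_{3/4})} \le C\e^\tau$. The Lipschitz boundary is handled only through size estimates on $G$ and $\nabla G$; no boundary Carleman machinery is needed.

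Your plan has a real gap precisely where you flag it. After the bi-Lipschitz flattening $\Psi(x',x_d) = (x',x_d-\phi(x'))$, the transformed coefficient matrix $\tilde A$ involves $\nabla\phi$, which is merely $L^\infty$ since $\phi$ is only Lipschitz. Carleman estimates, and indeed unique continuation itself, are not available for general $L^\infty$ divergence-form coefficients; the Lipschitz regularity of $A$ that makes the problem tractable is destroyed by the change of variables, and invoking Koch--Tataru does not repair this without additional structure that you have not supplied. If you wish to salvage the Carleman route you would have to run it in the original coordinates, keeping $A$ Lipschitz and treating the curved Lipschitz face $\partial\Omega\cap B_1$ through the boundary terms (closer to the Alessandrini--Rondi--Rosset--Vessella framework the paper alludes to but rejects for other reasons). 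The paper's Green-function argument sidesteps all of this by reducing to a purely interior three-ball inequality in $B_1$, where Lipschitz $A$ is entirely sufficient and the boundary geometry never enters the unique-continuation step.
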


\begin{proof}
    The proof is inspired by \cite{LMNN21}. We first extend the Lipschitz coefficient matrix $A$ from $B_{1,+}$ to $B_1$, still denoted by $A$, such that it still satisfies the same assumptions as in $B_{1,+}$. Since $B_1$ is a smooth domain, we may construct the Green function in $B_1$ such that $\cL(G(x,\cdot)) = \delta_x$ and $G(x,\cdot) = 0$ on $\partial B_1$. It is known (see, e.g., \cite{KLZ14}) that $G(x,y)$ satisfies (for $d\ge 3$)
    \begin{equation}\label{est.Gxy}
        |G(x,y)| \le \frac{C}{|x-y|^{d-2}},
    \end{equation}
    and
    \begin{equation}\label{est.DGxy}
        |\nabla_y G(x,y)| \le \min \bigg\{ \frac{C}{|x-y|^{d-1}}, \frac{C \text{dist}(x, \partial B_1)}{|x-y|^d} \bigg\}.
    \end{equation}
    For $d=2$, \eqref{est.Gxy} should be replaced by $|G(x,y)| \le C(1+ \ln |x-y|)$.

     Let $\Gamma_1 = \partial \Omega \cap B_1$ and $\Gamma_2 = \partial B_1 \cap \Omega$. Now for the $A$-harmonic function $u$ in $B_{1,+}$, by the Green formula, for $x\in B_{1,+}$
    \begin{equation*}
    \begin{aligned}
        u(x) & = \int_{\partial B_{1,+}} \bigg\{  u \frac{\partial G}{\partial \nu}(x,\cdot) - G(x,\cdot) \frac{\partial u}{\partial \nu} \bigg\} d\sigma \\
        & = \int_{\Gamma_1} \bigg\{  u \frac{\partial G}{\partial \nu}(x,\cdot) - G(x,\cdot) \frac{\partial u}{\partial \nu} \bigg\} d\sigma + \int_{\Gamma_2} \bigg\{  u \frac{\partial G}{\partial \nu}(x,\cdot) - G(x,\cdot) \frac{\partial u}{\partial \nu} \bigg\} d\sigma \\
        & =: u_1(x) + u_2(x).
        \end{aligned}
    \end{equation*}
    Note that the above calculation also makes sense for $x\in B_{1,-} = B_1 \setminus \overline{B_{1,+}}$ and $u(x) = 0$ for $x\in B_{1,-}$.
    Moreover, $u_1$ is $A$-harmonic in $B_1 \setminus \Gamma_1$ with a jump across $\Gamma_1$ (this is a well-known fact in layer potential theory), while $u_2$ is $A$-harmonic in the entire $B_1$.

    We now estimate $u_1$ and $u_2$ separately. For $u_1(x)$ with $x\in B_1 \setminus \Gamma_1$, by the assumption
    \begin{equation*}
        |u_1(x)| \le \e \int_{\Gamma_1} |G(x,y)| + |\frac{\partial G}{\partial \nu}(x,y)| d\sigma_y.
    \end{equation*}
    It follows from \eqref{est.Gxy} and the first part of \eqref{est.DGxy} that
    \begin{equation*}
        \int_{\Gamma_1} |G(x,y)| d\sigma_y \le C, \quad \text{and} \quad \int_{\Gamma_1} |\nabla_y G(x,y)| d\sigma_y \le C (1+ |\ln \delta(x)| ),
    \end{equation*}
    where $\delta(x) = \text{dist}(x,\Gamma_1)$. Consequently, we obtain
    \begin{equation*}
        |u_1(x)| \le C {\e}(1+ |\ln \delta(x)| ), \quad \text{for any } x\in B_1 \setminus \Gamma_1,
    \end{equation*}
    which leads to
    \begin{equation}\label{est.u1.B1pm}
        \norm{u_1}_{L^2(B_{1,\pm})} \le C\e,
    \end{equation}
    where $B_{1,-} = B_1 \setminus \overline{B_{1,+}}$.
    
    For $u_2$, since $u = u_1 + u_2 = 0$ in $B_{1,-}$, we have
    \begin{equation*}
        \norm{u_2}_{L^2(B_{1,-})} \le C\e, \quad \text{and} \quad \norm{u_2}_{L^2(B_{1,+})} \le \norm{u}_{L^2(B_{1,+})} + C\e.
    \end{equation*}
    Recall that $u_2$ is $A$-harmonic in $B_1$. By the three-ball inequality in $B_1$, there exists $\tau$ such that
    \begin{equation*}
    \begin{aligned}
        \norm{u_2}_{L^2(B_{3/4}) } & \le C \norm{u_2}_{L^2( B_{1/2,-}) }^\tau \norm{u_2}_{L^2( B_{1})}^{1-\tau } \\
        & \le C\e^{\tau} ( \norm{u}_{L^2(B_{1,+})}^{1-\tau} + \e^{1-\tau} ) \\
        & \le C\e + C\e^\tau \norm{u}_{L^2(B_{1,+})}^{1-\tau}.
    \end{aligned}
    \end{equation*}
    Combined with \eqref{est.u1.B1pm} and $\norm{u}_{L^2(B_{1,+})} = 1$, we obtain
    \begin{equation*}
        \norm{ u }_{L^2(B_{3/4,+}) } \le C\e^\tau.
    \end{equation*}
    Finally, the De Giorgi-Nash estimate implies the desired estimate.
\end{proof}

\subsection{Domain decomposition}\label{sec.Domain Decomposition}
Let $L\ge 0$ be the Lipschitz constant associated to the Lipschitz domain $\Omega$. Define a standard rectangular cuboid by $Q_0 = [-\frac12,\frac12)^{d-1} \times \{ \sqrt{d-1}(1+L)[-1,1) \}$. Throughout this paper, all the cuboids considered are translated and rescaled versions of $Q_0$. Let $\pi(Q)$ be the vertical projection of $Q$ on $\R^{d-1}$. For example, $\pi(Q_0) = [-\frac12,\frac12)^{d-1}$. Denote the side length of $Q_0$ and $\pi(Q_0)$ by $s(Q_0) = s(\pi(Q_0)) = 1$. 

Let $0\in \partial \Omega$ and $Q$ be a cuboid centered at $0$. Assume that $\partial \Omega \cap Q$ is given by the local graph $x_d = \phi(x')$ with $x'\in Q'= \pi(Q)$. By our assumption, $|\nabla \phi| \le L$ in $Q'$. Let $k\ge 3$. We partition $Q'$ into $2^{k(d-1)}$ small equal cubes $\{ q' \}$ with side length $s(q') = 2^{-k} s(Q')$. For each small cube $q'$ as above, we further cover $\pi^{-1}(q') \cap Q$ by at most $2^k+1$ disjoint cuboids $\{ q \}$ similar to $Q_0$ such that $s(q) = s(q')$ and one of these cuboids centers on $\partial\Omega \cap Q$. Let $\mathcal{B}_k(Q)$ be the collection of boundary cuboids $q$ that centered on $\partial \Omega \cap Q$ and $\mathcal{I}_k(Q)$ be the interior cuboids that intersect with $\Omega \cap Q$. Clearly, there are exactly $2^{k(d-1)}$ boundary cuboids in $\mathcal{B}_k(Q)$. See Figure \ref{fig_1} for a demonstration.

\begin{figure}[H]
    \centering
    \includegraphics[scale = 0.5]{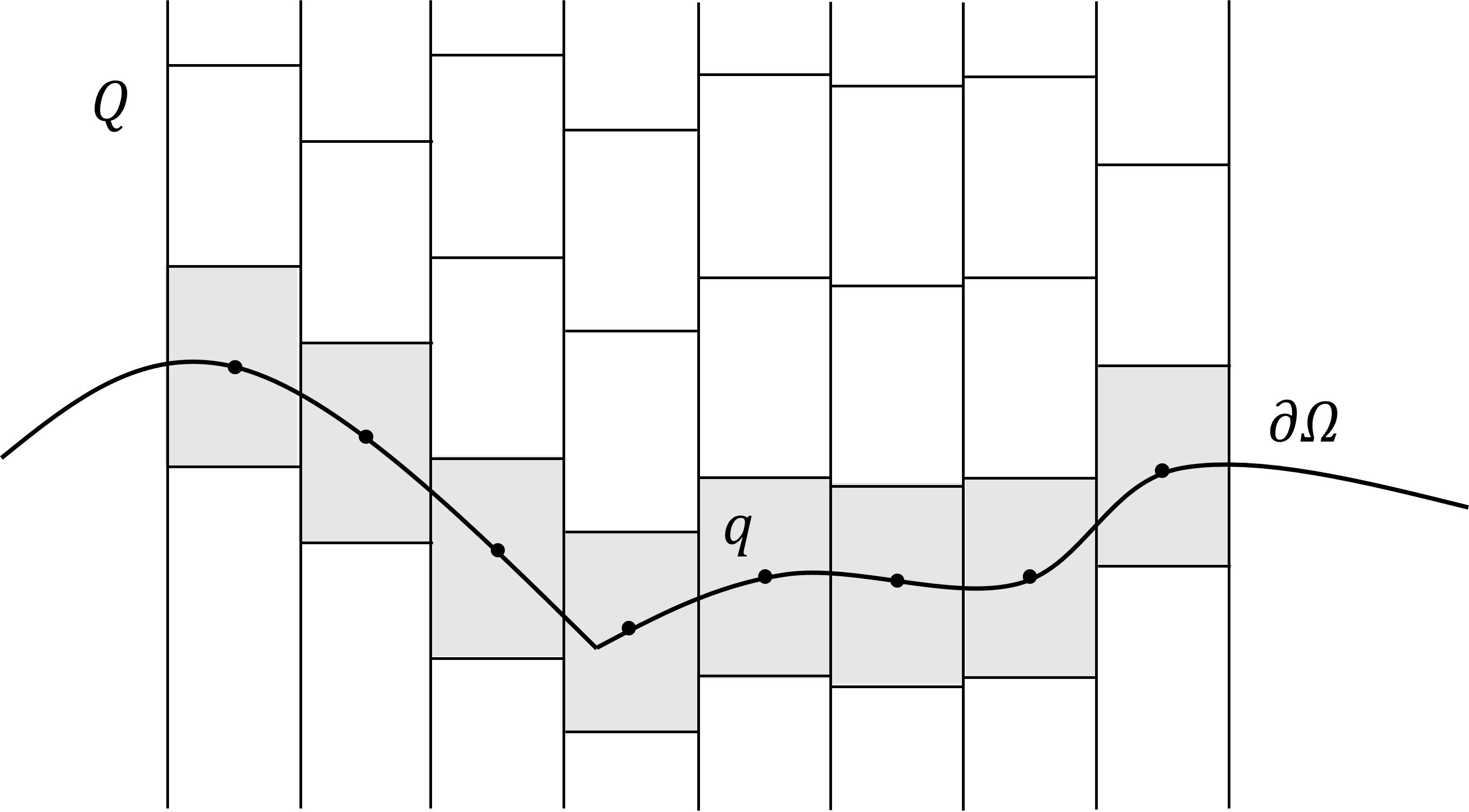}
    \caption{Standard domain decomposition}
    \label{fig_1}
\end{figure}

There is a simple fact about the above decomposition:
\begin{equation*}
    \min_{q\in \mathcal{I}_k(Q)} \text{dist}(q, Q\cap \partial \Omega) \ge \frac12 \sqrt{d-1} \cdot s(q).
\end{equation*}
This implies that a thin boundary layer of $Q\cap \partial \Omega$ with width $\frac12 \sqrt{d-1} \cdot s(q)$ is entirely contained in $\cup_{q\in \mathcal{B}_k(Q)} q$.

\subsection{A boundary layer lemma}
Let $Q$ be a cuboid centered at a point in $\overline{\Omega}$. Denote by $\ell(Q)$ the diameter of $Q$ (i.e., length of diagonal). Note that if $x_Q$ is the center of $Q$, then $2Q \subset B_{\ell(Q)}(x_Q)$. As in \cite{LMNN21}, we define the maximal doubling index of $u$ in $Q$ by
\begin{equation*}
    N_u^*(Q) = \sup_{x\in Q\cap \overline{\Omega}, \ell(Q)/2\le r\le \ell(Q)} N_u(x,r).
\end{equation*}

The following is the key lemma of this section which shows that the maximal doubling index drops when going down to some smaller boundary cuboids.
\begin{lemma}\label{lem.dropN}
Let $\Omega$ be a Lipschitz domain whose local boundary near $0\in \partial \Omega$ is given by $x_d = \phi(x')$ as in Section \ref{sec.Domain Decomposition} and $|\nabla \phi| \le L$. There exist $k_0 \ge 5$ and $N_0 \ge 1$ such that for any integer $k\ge k_0$, there exist $\gamma_0 >0$ and $\omega_0>0$ (both only depending on $k, d$ and $L$) such that the following statement holds. Suppose $\frac12 I\le A\le 2I, |\nabla A|\le \gamma_0$, $\omega(1)\le \omega_0$ and $u$ is an $A$-harmonic function in $B_{1,+}$ with $u = 0$ on $B_1 \cap \partial \Omega$. Let $Q\subset B_{1/64}$ be a boundary cuboid centered on $\partial\Omega$. If $N_u^*(Q)> N_0$, then there exists $q\in \mathcal{B}_k(Q)$ such that $N_u^*(q) \le \frac12 N_u^*(Q)$.
\end{lemma}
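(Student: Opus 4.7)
The plan is to argue by contradiction, adapting the scheme of \cite{LMNN21} to the Lipschitz setting. Suppose that $N_u^*(q) > N^*/2$ for every boundary cuboid $q \in \mathcal{B}_k(Q)$, where $N^* := N_u^*(Q) > N_0$ and $N_0$ is to be determined.

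The first step is to convert the contradiction hypothesis into boundary smallness of $u$ and $|n \cdot A\nabla u|$. For each $q$, the hypothesis produces $(y_q, \rho_q) \in (q \cap \overline{\Omega}) \times [\ell(q)/2, \ell(q)]$ with $N_u(y_q, \rho_q) > N^*/2$; Lemma \ref{lem.Jux1-x0} lets me shift $y_q$ into $\partial\Omega \cap q$ at a harmless additive cost. Combining the defining doubling inequality $J_u(y_q, \rho_q) \le e^{-N^*/2} J_u(y_q, 2\rho_q)$ with the trivial bound $J_u(y_q, 2\rho_q) \le C\|u\|_{L^2(B_{1,+})}^2$, a boundary De Giorgi--Nash estimate (exploiting $u \equiv 0$ on the adjacent piece of $\partial\Omega$) yields
\[
\|u\|_{L^\infty(B(y_q, c\rho_q/2) \cap \Omega)} \le C\, 2^{kd/2} e^{-N^*/4} \|u\|_{L^2(B_{1,+})}.
\]
Letting $y_q$ range over $\partial\Omega \cap q$ and applying Lemma \ref{lem.Jux1-x0} once more extends this smallness to the entire tube $T := \{z \in \Omega : \dist(z, \partial\Omega \cap Q) \le c 2^{-k}\ell(Q)\}$, and a boundary Schauder estimate with the Dirichlet condition upgrades the bound to $|n \cdot A\nabla u| \le C\, 2^{Ck} e^{-N^*/4}\|u\|_{L^2(B_{1,+})}$ on $\partial\Omega \cap Q$.

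Next, I would feed this small Cauchy data into a rescaled, localized version of Lemma \ref{lem.Cauchy} applied in a ball $B(x_Q, r) \subset Q$ with $x_Q \in \partial\Omega \cap Q$ and $r \sim \ell(Q)$, obtaining $\|u\|_{L^\infty(B(x_Q, r/2) \cap \Omega)} \le C e^{-cN^*\tau}\|u\|_{L^2(B_{1,+})}$. The contradiction is then extracted as follows. By the cuboid geometry (choosing $k_0$ large enough), the maximizer $(x^*, r^*)$ of $N^*$ lies in $B(x_Q, r/2)$, so $J_u(x^*, 2r^*) \le C e^{-cN^*\tau} \|u\|_{L^2(B_{1,+})}^2 (r^*)^d$. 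Combining this upper bound with the three-ball inequality \eqref{est.3ball.A=I} applied at $x^*$ with outer scale $R \sim 1$, where the normalization $\|u\|_{L^2(B_{1,+})} = 1$ supplies the macroscopic anchor $J_u(x^*, R) \ge c > 0$, produces a lower bound on $J_u(x^*, 2r^*)$ depending only on $d, L, \Lambda, k$ but independent of $N^*$. The two bounds are incompatible once $N_0$ is sufficiently large.

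The main obstacle lies in this final reconciliation: unlike interior drop lemmas, here $N^* > N_0$ is a \emph{ratio} condition that is a priori consistent with all $J_u$ values being very small, so the contradiction cannot come from absolute smallness alone. The role of the three-ball inequality \eqref{est.3ball.A=I} is precisely to propagate the macroscopic nondegeneracy (guaranteed by the $L^2$-normalization) down to the scale $r^* \sim \ell(Q)$, converting ratio information into an absolute lower bound. Because the three-ball exponents contract across scales, one must track constants carefully: $k_0$ must be large enough to absorb the polynomial-in-$k$ losses $2^{kd/2}$ and $2^{Ck}$ from Step 1 into the exponential gain $e^{-N^*}$, and $N_0$ large enough to beat the three-ball contraction. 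A secondary technical point is that Lemma \ref{lem.Cauchy} as stated requires Cauchy data on all of $\partial\Omega \cap B_1$; a routine rescaling (using the Green-function representation in its proof) reduces to the local form needed here, with boundary data supported only on $\partial\Omega \cap Q$, at the cost of an additional factor of $\|u\|_{L^2(B_{1,+})}^{1-\tau}$ that must be carried through the final comparison.
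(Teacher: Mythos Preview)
Your contradiction framework is right, but two steps fail as written.

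\textbf{The smallness is off by a factor of $k$ in the exponent.} From $N_u(y_q,\rho_q)>N^*/2$ you use a \emph{single} doubling to get $J_u(y_q,\rho_q)\le e^{-N^*/2}J_u(y_q,2\rho_q)$, yielding smallness $\sim 2^{kd/2}e^{-N^*/4}$. This cannot absorb the polynomial losses $2^{Ck}$ that appear later: increasing $k$ makes things worse, not better, so your remark that ``$k_0$ must be large enough to absorb $2^{kd/2}$'' goes in the wrong direction. The missing idea is to use almost monotonicity (Remark~\ref{rmk.doubling}) \emph{upward}: since $N_u(y_q,r_q)>N^*/2$, one gets $N_u(y_q,2^j r_q)>(N^*-2)/4$ for every $1\le j\le k$, and telescoping these gives
\[
J_u(y_q,4\ell_q)\le J_u(y_q,\ell_Q/2)\,e^{-cN^*k}.
\]
The factor $k$ in the exponent is precisely what beats $2^{Ck}$ once $N^*\ge N_0$.

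\textbf{The lower bound cannot come from the macroscopic scale.} You claim the three-ball inequality \eqref{est.3ball.A=I} at $x^*$ with outer radius $R\sim 1$ and the anchor $J_u(x^*,R)\ge c$ produces a lower bound on $J_u(x^*,2r^*)$ independent of $N^*$. It does not: the three-ball inequality only yields upper bounds at intermediate scales, and to get a lower bound at scale $r^*\sim\ell_Q$ from information at scale $R$ you would need upper bounds on $N_u(x^*,r)$ for all $\ell_Q\le r\le R$---and you have none, since $N_u^*(Q)$ only controls scales in $[\ell_Q/2,\ell_Q]$, while the number of dyadic scales between $\ell_Q$ and $1$ is unbounded. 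The paper's fix is to work with the \emph{local} reference $M^2:=J_u(x_Q,\ell_Q)$: the upper bound from Cauchy uniqueness becomes $J_u(x_Q,s(Q)/8)\le CM^2 2^{Ck}e^{-\tau N^*k/5}$, and the lower bound comes from a \emph{fixed} number $m=m(d,L)$ of doublings between $s(Q)/8$ and $\ell_Q$, each controlled by $N_u(x_Q,\cdot)\le 2N^*$ via almost monotonicity, giving $J_u(x_Q,s(Q)/8)\ge M^2 e^{-2mN^*}$. The $M^2$ cancels and the contradiction is $(\tau k/5 - 2m)N^*\le Ck$, which fails for $k$ large, then $N^*\ge N_0$ large.

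A smaller issue: your ``boundary Schauder estimate'' for $n\cdot A\nabla u$ on $\partial\Omega$ is unavailable on a merely Lipschitz boundary. The paper sidesteps this by estimating $u$ and $\nabla u$ on the \emph{interior} shifted graph $\Gamma_1=\Gamma+\tfrac12\ell_q e_d$ via the interior gradient estimate, and then applies Lemma~\ref{lem.Cauchy} in the Lipschitz domain above $\Gamma_1$.
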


\begin{proof}
    Let $\ell_Q = \ell(Q)$ and $x_Q$ be the center of $Q$ on $\partial \Omega$. Given $k\ge 5$, let $\mathcal{B}_k(Q)$ be the set of boundary cuboids in the standard decomposition of $Q$ as in Section \ref{sec.Domain Decomposition}. Then $\ell_q := \ell(q) = 2^{-k} \ell_Q$. Put $N = N^*_u(Q)$ and $M^2 = J_u(x_Q,\ell_Q)$.

    We prove by contradiction. Assume for any $q\in \mathcal{B}_k(Q)$, $N_u^*(q) > N/2$. This means that for any $q\in \mathcal{B}_k(Q)$, there exists $x_q \in q$ and $r_q \in [ \ell_q /2, \ell_q]$ such that $N_u(x_q,r_q) > N/2$. For the given $k$, we may let $\e=\e(k)$ be sufficiently small so that $(1+\e)^k \le 2$. By the almost monotonicity in Lemma \ref{lem.Nu.A=I} and Remark \ref{rmk.doubling}, for such $\e>0$, by letting $\gamma_0$ and $\omega_0$ be small enough, we have for all $1\le j \le k$,
    \begin{equation*}
        N_u(x_q,r_q) + 1 \le (1+\e)^j (N_u(x_q, 2^j r_q) + 1 ) \le 2 (N_u(x_q, 2^j r_q) + 1 ).
    \end{equation*}
     As a result,
    \begin{equation*}
        N_u(x_q, 2^j r_q) > \frac{N-2}{4}, \quad \text{for } 1\le j \le k.
    \end{equation*}
    Thus,
    \begin{equation*}
    \begin{aligned}
        J_u(x_q, 4\ell_q) \le J_u(x_q, 8 r_q ) & = J_u(x_q, 2^{k-1} r_q) \exp \Big({-\sum_{j=3}^{k-2} N_u(x_q,2^j r_q)} \Big) \\
        & \le J_u(x_q,\ell_Q/2) e^{-(N-2)(k-4)/4} \\
        & \le 2M^2 e^{-(N-2)(k-4)/4},
    \end{aligned}
    \end{equation*}
    where the last inequality follows from Lemma \ref{lem.Jux1-x0}. In view of the definition of $J_u$ in \eqref{def.J_u} and \eqref{def.E}, as well as our assumption $\frac12 I \le A \le 2I$, we have
    \begin{equation}\label{est.Bxq}
        \int_{B_{2 \ell_q, +}(x_q)} u^2 \le C M^2 e^{-Nk/5}, \quad \text{for each } q\in \mathcal{B}_k(Q),
    \end{equation}
    where we have chosen $k_0, N_0$ large such that $(N-2)(k-4)/4 \ge Nk/5$ for all $k\ge k_0, N\ge N_0$.

    Now, let $\Gamma = Q\cap \partial \Omega = \{x = (x',x_d): x'\in \pi(Q), x_d = \phi(x') \}$.
    Then \eqref{est.Bxq} implies
    \begin{equation}\label{est.GammaSupU}
        \sup_{y\in \Gamma} \sup_{B_{\ell_q} (y)} |u| \le C M \ell_q^{-d/2} e^{-Nk/10}.
    \end{equation}
    Define $\Gamma_1 = \Gamma + \frac12 \ell_q e_d = \{ x=(x',x_d): x'\in \pi(Q), x_d = \phi(x') + \frac12 \ell_q \}$. Note that the distance from $\Gamma_1$ to $\partial \Omega$ is comparable to $\ell_q$. By the interior gradient estimate and \eqref{est.GammaSupU}, we have
    \begin{equation}\label{est.uDu.onGamma}
        \sup_{\Gamma_1} |u| + \ell_q \sup_{\Gamma_1} |\nabla u| \le C M \ell_q^{-d/2} e^{-Nk/10}.
    \end{equation}
    Let $\hat{x}_Q = x_Q + \frac12 \ell_q e_d \in \Gamma_1$ and $\hat{B}_{r,+}(\hat{x}_Q) = B_{r,+}(x_Q) + \frac12 \ell_q e_d = \{ x=(x',x_d): |x - \hat{x}_Q|<r, \text{ and } x_d > \phi(x') + \frac12 \ell_q \}$. Therefore, applying Lemma \ref{lem.Cauchy} in $\hat{B}_{s(Q)/2,+}(\hat{x}_Q)$ with Lipschitz boundary $\Gamma_1$, we obtain from \eqref{est.uDu.onGamma} that
    \begin{equation}\label{est.hatB.sQ}
        \sup_{\hat{B}_{s(Q)/4,+}(\hat{x}_Q)} |u| \le CM s(Q)^{-d/2} 2^{\tau kd/2 + k} e^{-\tau Nk/10},
    \end{equation}
    where we also used the fact $s(Q) = C(d,L) 2^k \ell_q$.
    Combined with \eqref{est.GammaSupU} again, \eqref{est.hatB.sQ} yields
    \begin{equation}\label{est.JuxQ.upper}
        J_u(x_Q, s(Q)/8) \le C \int_{{B}_{s(Q)/4,+}({x}_Q)} u^2 \le CM^2 2^{\tau kd + 2k} e^{-\tau Nk/5}.
    \end{equation}

    Finally, we relate $J_u(x_Q, s(Q)/8)$ to $J_u(x_Q,\ell_Q)$ by a sequence of doubling inequalities. Let $m$ be the smallest integer such that $2^m s(Q)/8 > \ell_Q$. Note that $m$ depends only on $d$ and $L$. By the almost monotonicity of the doubling index, for $1\le j\le m$,
    \begin{equation*}
        N_u(x_Q, 2^{-j} \ell_Q ) \le 2N_u(x_Q,\ell_Q)\le 2N.
    \end{equation*}
    Thus
    \begin{equation}\label{est.JuxQ.lower}
    \begin{aligned}
        J_u(x_Q,s(Q)/8) & \ge J_u(x_Q, 2^{-m} \ell_Q) \\ & = J_u(x_Q,\ell_Q) \exp(- \sum_{j=1}^{m} N_u(x_Q, 2^{-j} \ell_Q) ) \\
        & \ge M^2 e^{-2mN}.
    \end{aligned}
    \end{equation}
    Comparing \eqref{est.JuxQ.upper} and \eqref{est.JuxQ.lower}, we obtain
    \begin{equation*}
        (\frac15 - \frac{2m}{k})N \le \frac{C}{k} + (\tau d + 1)\log 2.
    \end{equation*}
    Clearly, this is a contradiction if $k\ge k_0$ and $N\ge N_0$ for $k_0$ and $N_0$ sufficiently large. Note that $k_0$ and $N_0$ depend only on $d$ and $L$.
\end{proof}

\begin{remark}
    In Lemma \ref{lem.dropN} and several lemmas in the rest of the paper, we assume $\frac12 I \le A\le 2I$. This assumption is only for technical convenience and can be achieved by a linear transformation in a neighborhood of $0\in \partial \Omega$.
\end{remark}

\section{Absence of nodal points}
In this section, we will establish more boundary layer lemmas that show the absence of zeros near relatively large portion of the boundary for small maximal doubling index. The existence of flat spots for convex domains will be crucial in this section. 

For a point $y$ in $\Omega$, denote by $\delta(y)$ the distance from $y$ to $\partial \Omega$. The following boundary layer lemma rules out the possibility of concentration of zeros near any Lipschitz boundary.
\begin{lemma}\label{lem.boundary layer}
    Let $\Omega$ be a Lipschitz domain (with Lipschitz constant $L$ and $\ell = \frac{1}{1+L}$) and $0 \in \partial \Omega$. Let $u$ be $A$-harmonic in $B_{2,+} = \Omega \cap B_2$ and $u = 0$ on $\partial \Omega \cap B_2$. Suppose for any $B$ centered in $\overline{B_{2,+}}$ with $2B_+ \subset B_{2,+}$ and $\text{diam}(B) \ge \frac{\ell}{100}$,
    \begin{equation}\label{est.B+M}
        \frac{\int_{2B_+} u^2 }{\int_{B_+} u^2} \le M. 
    \end{equation}
    Assume that $u$ has zeros in $B_{1/16,+} = \Omega \cap B_{1/16}$. Then $u$ has at least one zero in $B_{1/4,+} \cap \{ \delta(y) > \rho \}$, where $\rho$ is a positive constant depending only on $A, M$ and $L$.
\end{lemma}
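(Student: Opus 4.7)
I would argue by contradiction. Suppose $u$ has no zero in $B_{1/4,+}\cap\{\delta>\rho\}$. For $\rho$ smaller than a dimensional multiple of $\ell$, this set is connected by the interior cone property of the Lipschitz domain, so $u$ has a definite sign there, and after replacing $u$ by $-u$ I may assume $u>0$ on $B_{1/4,+}\cap\{\delta>\rho\}$. Normalize so that $\|u\|_{L^2(B_{1,+})}=1$. Iterating the doubling hypothesis \eqref{est.B+M} down to scale $\ell/100$, together with the standard $L^\infty$--$L^2$ estimate, gives uniform comparability of $\|u\|_{L^2(B_r(q)\cap\Omega)}$ across admissible scales for $q\in B_{1/2,+}$. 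In particular, the vanishing order of $u$ at every $q\in B_{1/2,+}$ is bounded by some $N=N(M)$ and $\|u\|_{L^\infty(B_{1/2,+})}\le C(M)$.

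Let $p_0\in B_{1/16,+}$ be a zero of $u$. Since $u$ is a nontrivial $A$-harmonic function, the strong maximum principle applied to $\pm u$ forces $u$ to take both signs in every neighborhood of $p_0$; a standard quantitative version (relying on the doubling bound on the vanishing order at $p_0$, which yields a bounded ratio between $\sup_{B_r(p_0)}u_+$ and $\sup_{B_r(p_0)}u_-$ on small balls) supplies a point $p_1\in B_{1/16,+}$ with $u(p_1)\le -c_1$ for some $c_1=c_1(M)>0$. Since $u>0$ on $\{\delta>\rho\}$, we must have $\delta(p_1)\le\rho$. Let $\mathcal U$ be the connected component of $\{u<0\}\cap B_{1/4,+}$ containing $p_1$; by construction $\mathcal U\subset\{\delta\le\rho\}$.

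I then split into two cases. If $\overline{\mathcal U}\subset B_{1/4,+}$ compactly, then $-u$ is $A$-harmonic and positive in $\mathcal U$ while $-u=0$ on $\partial\mathcal U\subset\{u=0\}\cup\partial\Omega$; the weak maximum principle forces $-u\le 0$ on $\mathcal U$, contradicting $u(p_1)<0$. Otherwise $\overline{\mathcal U}$ reaches $\partial B_{1/4}$, producing a connected channel inside $\{u<0\}$ of length $\ge 3/16$ and transverse width $\le\rho$ linking $p_1$ to the outer boundary of $B_{1/4}$. On this channel $v:=-u$ is a positive $A$-harmonic function vanishing on $\partial\mathcal U\setminus\partial B_{1/4}$ and bounded by $C(M)$ on $\partial\mathcal U\cap\partial B_{1/4}$. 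A Phragm\'en--Lindel\"of / thin-domain decay estimate for elliptic equations with Lipschitz coefficients then yields
\begin{equation*}
v(p_1)\le C(M)\exp\bigl(-c_2/\rho\bigr),
\end{equation*}
with $c_2>0$ depending only on $L$ and $A$. Combined with $v(p_1)=|u(p_1)|\ge c_1(M)$, this forces $\rho\ge c_3(A,M,L)$; choosing $\rho$ strictly smaller than $c_3$ completes the contradiction.

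The main technical obstacle is the thin-domain decay estimate in the non-compact case: the shape of $\mathcal U$ is a priori irregular and the operator has only Lipschitz coefficients. The cleanest route I see is to cover a neighborhood of the channel by a chain of $O(1/\rho)$ overlapping boundary balls of radius comparable to $\rho$ centered on $\partial\Omega$, and to iterate the quantitative Cauchy uniqueness in Lemma \ref{lem.Cauchy} along the chain. At each step the Dirichlet smallness $u|_{\partial\Omega}=0$ is free, while the normal-derivative smallness required by Lemma \ref{lem.Cauchy} is produced from the pointwise smallness of $u$ on the transverse slice $\{\delta=\rho\}\cap\overline{\mathcal U}$ (inherited from the previous step in the chain) via interior gradient estimates; the factor $\tau\in(0,1)$ of Lemma \ref{lem.Cauchy} then compounds to the stated exponential-in-$1/\rho$ decay. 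Arranging the chain so that the chained exponent $\tau^{O(1/\rho)}$ and the lower bound $c_1(M)$ are indeed comparable at the final scale is the most delicate bookkeeping in the argument.
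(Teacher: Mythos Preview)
Your contradiction setup and Case~1 are fine, and the overall idea of trapping the negative set in a thin boundary layer is close in spirit to what the paper does. However, the execution of Case~2 has a genuine gap.

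The proposed iteration of Lemma~\ref{lem.Cauchy} along a chain of boundary balls cannot be run as described. That lemma requires simultaneous smallness of $u$ and of the conormal derivative $n\cdot A\nabla u$ on the boundary piece $B\cap\partial\Omega$. You have $u=0$ on $\partial\Omega$ for free, but you offer no mechanism to make $n\cdot A\nabla u$ small there. Your suggestion to manufacture it ``from the pointwise smallness of $u$ on the transverse slice $\{\delta=\rho\}\cap\overline{\mathcal U}$ via interior gradient estimates'' does not work: interior gradient estimates bound $\nabla u$ only at points a fixed distance from $\partial\Omega$, not on $\partial\Omega$ itself, and in a merely Lipschitz domain there is no pointwise boundary gradient estimate to invoke. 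In fact $|\partial u/\partial\nu|$ on $\partial\Omega$ may well be of unit size; nothing in the hypotheses forces it to be small. Thus each step of your chain lacks one of the two inputs Lemma~\ref{lem.Cauchy} needs, and the exponential-in-$1/\rho$ decay does not follow from that lemma. A secondary issue: the ``standard quantitative'' sign-change step producing $p_1$ with $u(p_1)\le -c_1(M)$ is asserted but not justified when the zero $p_0$ sits within distance $\rho$ of the Lipschitz boundary and the doubling hypothesis only controls scales $\ge \ell/100$.

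The paper's proof avoids all of this by working on the \emph{positive} side. It fixes a point $z_0$ on $\partial B_{1/8,+}$ with $\delta(z_0)\gtrsim\ell$, uses the doubling bound and the Harnack inequality to get $u\ge c_M$ on a surface cap near $z_0$, and then builds a barrier $v=c_M v_1-C_0 v_2$ in $B_{1/8,+}$, where $v_1,v_2$ are the $A$-elliptic measures of the good cap and of the thin annular strip $\partial B_{1/8,+}\cap\{0<\delta<\rho\}$, respectively. The comparison principle for positive $A$-harmonic functions (boundary Harnack in Lipschitz domains) gives $v_2/v_1\le C\rho^{1/2}/c_1$ uniformly in $B_{1/16,+}$, so for $\rho$ small one gets $u\ge v>0$ in $B_{1/16,+}$, contradicting the assumed zero. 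This route uses only $L^2$/nontangential estimates for the Dirichlet problem and the boundary Harnack principle, both valid in Lipschitz domains, and never touches the conormal derivative.
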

\begin{proof}
    Assume $\int_{B_{1/2,+}} |u|^2 = 1$. Let $\rho \in (0,\ell/100)$. Suppose that $u$ has no zero in $B_{1/4,+} \cap \{ \delta(y) > \rho \}$. Without loss of generalilty, assume $u>0$ in  $B_{1/4,+} \cap \{ \delta(y) > \rho \}$ (otherwise, replace $u$ by $-u$). We show that if $\rho$ is sufficiently small, then $u$ has no zeros in $B_{1/16,+}$, which is a contradication.

    By the local $L^\infty$ estimate, we know $|u| \le C_0$ in $B_{1/4,+}$. Let $z_0$ be a point on $\partial B_{1/8,+} \cap \Omega$ and $\delta(z_0) \ge \frac{\ell}{16}$ (The existence of such $z_0$ follows from a simple geometric observation). By our assumption that $u>0$ in $B_{1/4,+} \cap \{ \delta(y) > \rho \}$ and the fact $B_{\ell/32}(z_0) \subset B_{1/4,+} \cap \{ \delta(y) > \rho \} $, we can apply the Hanack inequality to obtain
    \begin{equation*}
        \inf_{y \in B_{\ell/64}(z_0)} u(y) \ge c \bigg( \fint_{B_{\ell/32}(z_0)} u^2 \bigg)^{1/2} \ge c_M,
    \end{equation*}
    where the last inequality follows from \eqref{est.B+M} and our assumption $\int_{B_{1/2,+}} |u|^2 = 1$. Note that $B_{\ell/64}(z_0) \cap B_{1/8,+}$ contains a surface ball of $\partial B_{1/8,+} \cap \Omega$ with positive measure greater than $c = c(d,L)>0$.

    Now we construct a barrier function $v$ in $B_{1/8,+}$ such that $v$ is $A$-harmonic in $B_{1/8,+}$, $v = c_M$ on $\partial B_{1/8,+} \cap B_{\ell/64}(z_0)$, $v = -C_0$ on $\partial B_{1/8,+} \cap \{ 0<\delta(y) < \rho\} $ and $v = 0$ otherwise on $\partial B_{1/8,+}$. By the maximum principle, we see that $u \ge v$ in $B_{1/8,+}$. We show that $v > 0$ in $B_{1/16,+} \subset B_{1/8,+}$, provided that $\rho$ is small enough.

    Note that $B_{1/8,+}$ is a Lipschitz domain. Using the elliptic measure $\omega^x_A$, we can express $v$ as
    \begin{equation*}
        v(x) = c_M v_1(x)  - C_0 v_2(x) \quad \text{for } x\in B_{1/8,+},
    \end{equation*}
    where $v_1(x) = \omega_A^x(\partial B_{1/8,+} \cap B_{\ell/64}(z_0) )$ and $v_2(x) = \omega_A^x(\partial B_{1/8,+} \cap \{ 0<\delta(y) < \rho\})$. We refer to \cite{K94} for the definition and properties of elliptic measures. Observe that both $v_1, v_2$ are positive $A$-harmonic functions. Let $z_1$ be an interior point of $B_{1/8,+}$ such that $\text{dist}(z_1, \partial B_{1/8,+}) \ge \ell/100$. Then by the estimate of $A$-harmonic measure (and Harnack inequality, if necessary),
    \begin{equation*}
        v_1(z_1) = \omega_A^{z_1}(\partial B_{1/8,+} \cap B_{\ell/64}(z_0) ) \ge c_1,
    \end{equation*}
    for some positive constant $c_1$ depending only on $d$ and $L$; see \cite[Lemma 1.3.2]{K94}. On the other hand, the $L^2$ estimate of the nontangential maximal function $(v_2)^*$ (see, e.g., \cite[Theorem 1.1]{KS11} with $\varepsilon = 1$) implies that
    \begin{equation*}
    \begin{aligned}
        \norm{v_2}_{L^2(B_{1/8,+})} & \le C \norm{(v_2)^*}_{L^2(\partial B_{1/8,+})}
     {\le C \norm{v_2}_{L^2(\partial B_{1/8,+})}}\\
        & = C |\partial B_{1/8,+} \cap \{ 0<\delta(y) < \rho\}|^\frac12 \le C_1 \rho^\frac12,
    \end{aligned}
    \end{equation*}
    where $C_1$ depends only on $d$ and $L$. By the local $L^\infty$ estimate, this implies that $0< v_2(z_1) \le C_2 \rho^\frac12$. Then it follows from the comparison principle for positive solutions (see, e.g., \cite[Lemma 1.3.7]{K94}) that
    \begin{equation*}
        \frac{v_2(x)}{v_1(x)} \le C_3 \frac{v_2(z_1)}{v_1(z_1)} \le \frac{C_2 C_3\rho^\frac12}{c_1} \quad \text{for all } x\in B_{1/16,+}.
    \end{equation*}
    Now if $\rho$ is sufficiently small (depending on $A, M$ and $L$) such that
    \begin{equation*}
        c_M - \frac{C_0 C_2 C_3 \rho^\frac12}{c_1} > 0,
    \end{equation*}
    then $v = c_M v_1 - C_0 v_2 > 0$ in $B_{1/16,+}$. This implies $u>0$ in $B_{1/16,+}$ and contradicts to the assumption that $u$ has zeros in $B_{1/16,+}$.
\end{proof}

\begin{lemma}\label{lem.ConvexBoundaryLayer}
    Let $\Omega$ be a convex domain (with Lipschitz constant $L$) and $0 \in \partial \Omega$. Assume $\frac12 I \le A \le 2I$. Let $u$ be $A$-harmonic in $B_{1,+}$ and $u = 0$ on $\partial \Omega\cap B_1$. Assume
    \begin{equation}\label{est.boundedN}
        \qquad N_u(0,\frac14) \le N.
    \end{equation}
    Then there exist $\gamma_0 = \gamma_0(d, L)>0, \mu = \mu(d,L,N)>0$, $\alpha = \alpha(L)\ge 1$ and $z\in \partial \Omega \cap B_{1/80}$ such that if $|\nabla A| \le \gamma_0$, then
    \begin{equation*}
        |u(x)| \ge c \delta(x)^\alpha \norm{u}_{L^2(B_{1/4,+})}, \quad \text{for all } x\in B_{\mu,+}(z),
    \end{equation*}
    where $c$ depends only on $d, L$ and $N$.
\end{lemma}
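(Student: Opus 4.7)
My plan is to locate a quantitative \emph{flat spot} on $\partial\Omega$ inside $B_{1/80}$ via Lemma \ref{lem.flat spot}, use the bounded doubling index to produce an interior point above the flat spot where $|u|$ is quantitatively comparable to $\|u\|_{L^2(B_{1/4,+})}$, and then transfer this interior lower bound down to the boundary via a boundary Harnack / comparison argument. Morally, the flat spot plays the role that $C^1$-regularity of $\partial\Omega$ does in the analogous estimate for $C^1$ domains in \cite{LMNN21,G22}.

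First I would fix a scale $r_0\in(0,1/80)$ to be chosen in terms of $d$, $L$, and $N$, and apply Lemma \ref{lem.flat spot} to produce $z\in\partial\Omega\cap B_{1/80}$ whose local graph $\phi$ is within $Cr_0^2$ of its support plane on $|x'-z'|<r_0$. After a rigid transformation I may take $z=0$ with support plane $\{x_d=0\}$, so that $0\le\phi(x')\le Cr_0^2$ for $|x'|<r_0$ and $\Omega\cap B_{r_0}(z)$ is an $O(r_0^2)$-perturbation of the upper half ball. The hypothesis $N_u(0,1/4)\le N$, combined with the almost monotonicity (Lemma \ref{lem.Nu.A=I} together with Remark \ref{rmk.doubling}) and the recentering Lemma \ref{lem.Jux1-x0}, gives $N_u(z,s)\le C(N,d,L)$ for all $s\in(r_0,1/8)$ once $\gamma_0$ and $\omega_0$ are small enough, and telescoping these doubling estimates down to scale $r_0$ yields
\begin{equation*}
\int_{B_{r_0,+}(z)} u^2 \;\ge\; c(N)\,r_0^{C(N)}\,\|u\|_{L^2(B_{1/4,+})}^2.
\end{equation*}
Setting $x_*:=z+\tfrac12 r_0 e_d$, which lies well inside $\Omega$ by the flat-spot geometry, an additional doubling estimate and the interior $L^2$--$L^\infty$ bound produce $x_0\in B_{cr_0}(x_*)$ with $|u(x_0)|\ge c(N)r_0^{-d/2}\|u\|_{L^2(B_{1/4,+})}$, and after possibly replacing $u$ by $-u$ I may assume $u(x_0)>0$.

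To turn this interior estimate into a lower bound near $\partial\Omega$, I would fix $\mu=\mu(N,d,L)\in(0,r_0/10)$ and set $D_\mu:=\Omega\cap B_{2\mu}(z)$, a Lipschitz subdomain on which I introduce a positive $A$-harmonic reference function $v$ vanishing on $\partial\Omega\cap B_{2\mu}(z)$ (for instance the $A$-harmonic measure of $\partial B_{2\mu}(z)\cap\Omega$ realized as an $A$-harmonic function in $D_\mu$). The standard Lipschitz-domain Carleson estimate gives $v(x)\gtrsim\delta(x)^\alpha$ on $B_{\mu,+}(z)$ for some $\alpha=\alpha(L)\ge 1$, and provided $u>0$ throughout $D_\mu$, the boundary Harnack principle yields
\begin{equation*}
\frac{u(x)}{v(x)} \;\ge\; c\,\frac{u(x_0)}{v(x_0)} \;\ge\; c'(N) \qquad\text{on } B_{\mu,+}(z).
\end{equation*}
Combined with the Carleson lower bound on $v$, this immediately gives the desired estimate.

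The main obstacle will be justifying the positivity of $u$ throughout $D_\mu$, which is essential for the boundary Harnack step. I plan to argue this by contradiction using the quantitative Cauchy uniqueness of Lemma \ref{lem.Cauchy}: if $u$ admitted a zero in $B_{\mu,+}(z)$, then unique continuation together with $u=0$ on $\partial\Omega$ and a forced smallness of $|n\cdot A\nabla u|$ on a substantial portion of $\partial\Omega\cap B_{2\mu}(z)$ (this smallness coming from the near-flatness of Step 1 combined with the bounded doubling) would force $|u(x_0)|$ to be super-polynomially small, contradicting the interior lower bound above once $\mu$ is chosen small enough relative to $c(N)$. The flat spot produced by Lemma \ref{lem.flat spot} is precisely what lets Lemma \ref{lem.Cauchy} be applied in an essentially half-ball geometry, and this is exactly where convexity of $\Omega$ is genuinely exploited rather than mere Lipschitz regularity.
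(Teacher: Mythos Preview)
The crucial gap is your positivity argument. You plan to show $u>0$ in $D_\mu$ by contradiction via Lemma \ref{lem.Cauchy}, claiming that an interior zero of $u$ would force ``smallness of $|n\cdot A\nabla u|$ on a substantial portion of $\partial\Omega\cap B_{2\mu}(z)$''. But there is no such implication: an interior zero of $u$ gives you no control whatsoever on the conormal derivative along $\partial\Omega$. Cauchy uniqueness runs in the direction \emph{small Cauchy data $\Rightarrow$ small $u$}; you are trying to use it as \emph{interior zero $\Rightarrow$ small Cauchy data}, and that simply fails. A nodal set of $u$ can reach the boundary while $|n\cdot A\nabla u|$ remains of unit size on both sides. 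Without positivity you cannot invoke boundary Harnack, so the whole chain collapses.

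The paper obtains positivity by a genuinely different mechanism, and this is where the flat spot is actually exploited. Having located $z_r$ with support plane $P$, it introduces the \emph{exact half-ball} $D_r(z_r)=B_r(z_r)\cap\{x_d>P\}$ and solves for the $A$-harmonic function $v$ in $D_r(z_r)$ with $v=u$ on the common boundary and $v=0$ on the rest. Using the boundary Lipschitz bound for $u$ in the convex region (Lemma \ref{lem.Du-Convex}) and the linear barrier (Lemma \ref{lem.comparison.flat}), one gets $\|u-v\|_{L^\infty(B_{r,+}(z_r))}\le C_3 r$. Because $v$ lives over a \emph{flat} boundary, Lemma \ref{lem.ToyLem} (whose proof is where Cauchy uniqueness is really used, to produce a boundary point with large normal derivative of $v$) yields a point $w$ and radius $t_0 r$ with $|v(x)|\ge c_N\tilde\delta(x)r^{-1}$ there. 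Transferring back, $|u(x)|\ge c_N\tilde\delta(x)r^{-1}-C_3 r$, so $u\neq 0$ once $\delta(x)>C_3c_N^{-1}r^2$. Finally, Lemma \ref{lem.boundary layer} (a barrier/elliptic-measure argument, not Cauchy uniqueness) upgrades ``no zeros outside a layer of width $\rho$'' to ``no zeros at all'' in a fixed smaller ball, and one chooses $r$ so that $C_3c_N^{-1}r^2<\tfrac12\rho t_0 r$. Your proposal bypasses both the flat-domain comparison $v$ and Lemma \ref{lem.boundary layer}, and without them I do not see how to close the positivity step.
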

\begin{proof}
    Recall that for convex domains, $\omega = 0$. By Lemma \ref{lem.Nu.A=I}, we see that the almost monotonicity formula \eqref{est.AmostMono.Nu} holds for $r_0 = 1$ if $\gamma_0$ is small enough. By an iteration, this implies that for integers $0\le k\le 10$
    \begin{equation*}
        N_u(0,2^{-k-2}) \le N_1,
    \end{equation*}
    with $N_1$ depending on $N$. By a standard argument, this further implies
    \begin{equation*}
        N_u(x,r) \le N_2,
    \end{equation*}
    for all $x\in B_{1/100,+}$ and 
    $r\in (0,1/25)$, which is equivalent to
    \begin{equation*}
        \frac{J_u(x,2r)}{J_u(x,r)} \le e^{N_2}.
    \end{equation*}
    In view of the definition of $J_u$, if $\frac12 I \le A\le 2I$, we have $B_{t/2}(x) \subset E(x,t) \subset B_{2t}(x)$. Consequently, for any $x\in B_{1/100,+}$ and $t\in (0,1/50)$
    \begin{equation}\label{est.DoublingM}
        \frac{\int_{B_{2t}(x)} u^2 }{ \int_{B_{t}(x)} u^2 } \le C \frac{J_u(x,4t)}{J_u(x,2t)} \frac{J_u(x,2t)}{J_u(x,t)} \frac{J_u(x,t)}{J_u(x,t/2)} \le Ce^{3N_2}.
    \end{equation}

    Let $r\in (0,1/1000)$ (to be determined later). By Lemma \ref{lem.flat spot}, we can find a boundary point $z_r \in \partial \Omega \cap B_{1/100}$ such that the distance from $\partial \Omega \cap B_r(z_r) $ to its support plane $x_d = P(x')$ at $z_r$ is at most $Cr^2$. It is crucial to point out that $C$ is independent of $r$.

    Let $D_r(z_r) = B_r(z_r) \cap \{ x_d > P(x') \}$ and $B_{r,+}(z_r) = \Omega \cap B_{r}(z_r)$. Notice that both $D_r(z_r)$ and $B_{r,+}(z_r)$ are convex, $B_{r,+}(z_r) \subset D_r(z_r)$ and $D_r(z_r) \setminus B_{r,+}(z_r)$ is a thin region. Let $v$ be an $A$-harmonic function in $D_r(z_r)$ such that $v = u$ on $\{ \partial B_{r,+}(z_r) = \partial D_r(z_r) \}$ and $v = 0$ on $\partial D_r(z_r) \setminus \partial B_{r,+}(z_r)$.
    
    Without loss of generality, assume $r^{-d/2} \norm{u}_{L^2(B_{2r,+}(z_r))} = 1$. The gradient estimate in convex domain $B_{2r,+}(z_r)$ implies that $\norm{\nabla u}_{L^\infty(B_{r,+}(z_r))} \le C_1 r^{-1}$. This can be shown by a standard barrier argument; see Lemma \ref{lem.Du-Convex} or \cite[Theorem 1.4]{CM11} for a related result. 
    Thus, $|u(x)| \le C_1 \delta(x) r^{-1}$ for any $x\in B_{r,+}(z_r)$. Let $\tilde{\delta}(x)$ be the distance from $x$ to the support plane $x_d = P(x')$. Then Lemma \ref{lem.comparison.flat} implies that $|v(x)| \le C_2 \tilde{\delta}(x) r^{-1}$ in $D_{r,+}(z_r)$. Note that for $x\in \partial \Omega \cap B_r(z_r)$, $\tilde{\delta}(x) \le Cr^2$ and thus $|v(x)| \le C_3 r$.
    Now, comparing $u$ and $v$, we obtain
    \begin{equation}\label{est.u-v}
        \norm{u-v}_{L^\infty(B_{r,+}(z_r))} \le C_3 r.
    \end{equation}

    Since $r^{-d/2}\norm{u}_{L^2(B_{2r,+}(z_r))} = 1$, by \eqref{est.DoublingM}, there exists $M > 1$ depending on $N$ such that
    \begin{equation*}
         r^{-d/2} J_u(z_r,r/4) \ge  M^{-1}.
    \end{equation*}
    Consequently,
    \begin{equation*}
        r^{-d/2} J_v(z_r,r/4) \ge \frac{1}{2}  M^{-1} - C_3 r^2.
    \end{equation*}
    Choose $r$ small so that $\frac{1}{2} M^{-1} - C_3 r^2 \ge \frac{1}{4} M^{-1}$. As a result,
    \begin{equation}\label{est.Doubling_v}
        N_v(z_r,r/4) = \log \frac{J_v(z_r,r/2)}{J_v(z_r,r/4)} \le N_3,
    \end{equation}
    where $N_3$ depends on $N$. Note that, applying the almost monotonicity of doubling index, we have $N_v(z_r,r/2^k) \le CN_3$ at least for $2\le k\le 10$.

    Since $D_{r,+}(z_r)$ has flat boundary on the support plane, it is proved in Lemma \ref{lem.ToyLem} that there exists $t_0>0$ (depending only on $N$ and $A$) and $w\in \partial D_{r,+}(z_r) \cap B_{r/16}(z_r)$,  such that $v$ does not change sign in $B_{t_0 r}(w) \cap D_{r,+}(z_r)$. Moreover, for every $x\in B_{t_0 r}(w) \cap D_{r,+}(z_r)$,
    \begin{equation*}
        |v(x)| \ge c_N \tilde{\delta}(x) r^{-1} \norm{v}_{L^\infty(D_{r/2,+}(z_r))} \ge c_N \tilde{\delta}(x) r^{-1}. 
    \end{equation*}
    In view of \eqref{est.u-v}, for any $x\in B_{t_0 r}(w) \cap B_{r,+}(z_r)$, 
    \begin{equation*}
        |u(x)| \ge c_N \tilde{\delta}(x) r^{-1} - C_3 r.
    \end{equation*}
    Hence, $u(x)$ has no zeros if $\tilde{\delta}(x) > C_3 c_N^{-1} r^2$. Since $\delta(x) \le \tilde{\delta}(x)$. Thus $u(x)$ has no zeros if $\delta(x) > C_3 c_N^{-1} r^2$ and $x\in B_{t_0 r}(w) \cap B_{r,+}(z_r)$.

    Next, since the distance between the flat boundary of $D_{r,+}(z_r)$ and $\partial \Omega \cap B_r(z_r)$ is bounded by $Cr^2$, we can find a point $w' \in \partial \Omega \cap B_r(z_r)$ such that $|w - w'| \le Cr^2$. Therefore, if $r<\frac12 C^{-1} t_0$, then $B_{\frac12 t_0 r, +}(w') = B_{\frac12 t_0 r}(w') \cap \Omega \subset B_{t_0 r}(w) \cap D_{r,+}(z_r)$. Consequently, $u$ has no zeros in $B_{\frac12 t_0 r}(w') \cap \{ \delta(x) > C_3 c_N^{-1} r^2 \}$. Due to this fact, \eqref{est.DoublingM} and Lemma \ref{lem.boundary layer} (with rescaling), there exists a constant $\rho>0$ such that if
    \begin{equation*}
        C_3 c_N^{-1} r^2 < \rho \frac12 t_0 r,
    \end{equation*}
    then $u$ has no zeros in $B_{\frac12 t_0 r,+}(w')$. Definitely, this is possible if we take
    \begin{equation*}
        r=r_0  := \min \{ \frac13 C^{-1} t_0, \frac13 \rho t_0 c_N C_3^{-1}\}.
    \end{equation*}
    Note that when we take different $r$, the point $z_r$, as well as $w$ and $w'$ will also vary. But as in the previous argument if we fix $r_0$ as above, we can find a particular $z = w' \in \partial \Omega \cap B_{1/80}$ such that $u$ has no zeros in $B_{\frac12 t_0 r_0, +}(z)$.

    Finally, let $4\mu = \frac12 t_0 r_0$. By the Harnack inequality and the doubling inequality (applied finitely many times depending on $\mu$), we can find $x_1 \in B_{3\mu,+}(z)$, away from the boundary, such that $|u(x_1)| \ge c \norm{u}_{L^2(B_{1/4,+})}$ where $c = c(d,A,L,N)>0$. Another use of Harnack inequality in inward cones (see Lemma \ref{lem.cone}) leads to 
    \begin{equation*}
        |u(x)| \ge c \delta(x)^\alpha u(x_1) \ge c \delta(x)^\alpha \norm{u}_{L^2(B_{1/4,+})},
    \end{equation*}
    for all $x\in B_{\mu,+}(z)$, where $\alpha$ depends only on $L$. This completes the proof.
\end{proof}

    Using a similar argument, we can generalize the same result from convex domains to quaisconvex domains. The idea is that we can approximate quasiconvex domains by convex domains at every point on the boundary and at every small scale. Lemma \ref{lem.boundary layer} will again be useful in this argument.

\begin{lemma}\label{lem.ZerosAbsence}
    Let $\Omega$ be a quasiconvex Lipschitz domain and $0 \in \partial \Omega$. Moreover, $\partial \Omega\cap B_{32}(0)$ can be expressed as a Lipschitz graph with constant $L$. Assume $\frac12 I \le A \le 2I$. Let $u$ be $A$-harmonic in $B_{16,+} = \Omega \cap B_{16}$ and $u = 0$ on $\partial \Omega \cap B_{16}$. Suppose
    \begin{equation}\label{est.boundedN}
        \qquad N_u(0,\frac12) \le N.
    \end{equation}
    Then there exist $\omega_0>0, \gamma_0>0, \theta>0$, depending only on $N, L$ and $d$, such that if $\omega(32) \le \omega_0$ and $|\nabla A| \le \gamma_0$, then $|u|>0$ in $B_{\theta,+}(y)$ for some $y\in \partial \Omega \cap B_{1/8}$.
\end{lemma}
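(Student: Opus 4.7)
The plan is to mirror the proof of Lemma \ref{lem.ConvexBoundaryLayer}, replacing the convexity of $\Omega$ with the quantitative convex approximation supplied by Lemma \ref{lem.quasiconvex}. The quasiconvex deviation contributes an $O(r\omega(2r))$ error that joins the $O(r^2)$ flatness error of Lemma \ref{lem.flat spot}; both are rendered harmless by choosing the thresholds $\omega_0,\gamma_0$ small.

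\textbf{Step 1 (almost flat spot for $\partial\Omega$).} Fix a small scale $r\in(0,\frac{1}{100})$, to be determined. Let $V_r$ be the convex hull of $\Omega\cap B_r(0)$. Lemma \ref{lem.quasiconvex} gives
\begin{equation*}
\dist(\partial V_r,\,\partial(\Omega\cap B_r(0)))\le 2r\omega(2r).
\end{equation*}
Since $V_r$ is locally a convex graph with Lipschitz constant controlled by $L$, Lemma \ref{lem.flat spot} produces $z_r'\in\partial V_r$ and a support plane $\{x_d=P(x')\}$ of $V_r$ at $z_r'$ with $|\partial V_r-P|\le Cr^2$ on $B_r(z_r')$. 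Choosing $z_r\in\partial\Omega$ within $2r\omega(2r)$ of $z_r'$ and combining the two approximations,
\begin{equation*}
\Omega\cap B_{r/2}(z_r)\subset\{x_d\ge P(x')\},\qquad \dist\bigl(\partial\Omega\cap B_{r/2}(z_r),\{x_d=P\}\bigr)\le C(r^2+r\omega(2r)).
\end{equation*}

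\textbf{Step 2 (comparison with a flat-boundary auxiliary).} Set $D_r(z_r)=B_r(z_r)\cap\{x_d>P(x')\}\supset B_{r,+}(z_r):=\Omega\cap B_r(z_r)$, so the thin strip $D_r(z_r)\setminus B_{r,+}(z_r)$ has width $O(r^2+r\omega(2r))$. Following the construction in Lemma \ref{lem.ConvexBoundaryLayer}, let $v$ be the $A$-harmonic function on $D_r(z_r)$ with $v=u$ on the spherical portion $\partial B_r(z_r)\cap\overline{\Omega}\cap\partial D_r(z_r)$ and $v=0$ on the remainder of $\partial D_r(z_r)$. Normalising $\|u\|_{L^2(B_{2r,+}(z_r))}=r^{d/2}$ (justified by the doubling bound propagated from the hypothesis $N_u(0,\tfrac12)\le N$ via Lemma \ref{lem.Nu.A=I}), the flat-boundary estimate (Lemma \ref{lem.comparison.flat}) gives $|v(x)|\le C\tilde\delta(x)/r$, where $\tilde\delta(x)$ is the distance from $x$ to the support plane. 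In particular $|v|\le C(r+\omega(2r))$ on $\partial\Omega\cap B_r(z_r)$. Since $u=0$ on $\partial\Omega$ and $u=v$ on the shared spherical boundary, the maximum principle on $B_{r,+}(z_r)$ yields
\begin{equation*}
\|u-v\|_{L^\infty(B_{r,+}(z_r))}\le C(r+\omega(2r)).
\end{equation*}

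\textbf{Step 3 (nonvanishing and conclusion).} The doubling index of $v$ at $z_r$ at scale $r$ is bounded in terms of $N$ (combine Lemma \ref{lem.Nu.A=I}, Remark \ref{rmk.doubling}, and the $L^\infty$ closeness of Step 2 with the $L^2$ normalisation). Lemma \ref{lem.ToyLem}, applied on the flat-boundary domain $D_r(z_r)$, then yields $w\in\partial D_r(z_r)\cap B_{r/16}(z_r)$ and $t_0=t_0(N)>0$ such that $v$ is sign-definite on $B_{t_0 r}(w)\cap D_r(z_r)$ with $|v(x)|\ge c_N\tilde\delta(x)/r$ there. Combining with Step 2,
\begin{equation*}
|u(x)|\ge \frac{c_N\tilde\delta(x)}{r}-C(r+\omega(2r))>0\quad\text{whenever}\quad\tilde\delta(x)>C_N(r^2+r\omega(2r)).
\end{equation*}
Translating $w$ to a nearby $w'\in\partial\Omega\cap B_{r/8}(z_r)$ (within $O(r^2+r\omega(2r))$), this forces $u\ne 0$ on $B_{t_0 r/2,+}(w')\cap\{\delta(y)>C_N\,r(r+\omega(2r))\}$. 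Fix $r=r(N,d,L)$ small and then $\omega_0,\gamma_0$ small enough that $C_N\,r(r+\omega(2r))<\tfrac12\rho\,t_0r$, where $\rho$ is supplied by Lemma \ref{lem.boundary layer}. A rescaled application of Lemma \ref{lem.boundary layer} (which only uses the Lipschitz structure) upgrades the above region of nonvanishing to a full ball $B_{\theta,+}(y)$ around some $y\in\partial\Omega\cap B_{1/8}$, as required.

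\textbf{Main obstacle.} The principal difficulty is the simultaneous management of two small geometric errors — the intrinsic $O(r^2)$ nonflatness from Lemma \ref{lem.flat spot} and the $O(r\omega(2r))$ quasiconvex deviation from Lemma \ref{lem.quasiconvex} — both of which must be strictly dominated by the quantitative lower bound $c_N\tilde\delta(x)/r$ produced by Lemma \ref{lem.ToyLem}. This compels an \emph{interleaved} choice of parameters, first fixing $r$ as a function of $N,d,L$, then choosing $\omega_0$ small in terms of this $r$, and only afterwards selecting $\gamma_0$ so that the almost-monotonicity of the doubling index is quantitative enough to transfer bounds from the hypothesis scale $\tfrac12$ down to $r$. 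A secondary subtlety is propagating the doubling control from $u$ to the auxiliary function $v$ on $D_r(z_r)$, which is handled by coupling the $L^\infty$ perturbation estimate of Step 2 with the $L^2$ normalisation used for $u$.
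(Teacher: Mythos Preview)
Your overall strategy---couple the convex approximation of Lemma \ref{lem.quasiconvex} with the flat-spot comparison and close via Lemma \ref{lem.boundary layer}---is the right one and parallels the paper. But two concrete gaps prevent the argument, as written, from going through.

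\textbf{Scaling in Step 1.} You take $V_r$ to be the convex hull of $\Omega\cap B_r(0)$, a set of diameter $\sim r$, and then assert that Lemma \ref{lem.flat spot} gives a window of size $r$ on which $|\partial V_r-P|\le Cr^2$. The scale-invariant form of Lemma \ref{lem.flat spot} says: on a convex graph over a region of size $R$, one finds a window of size $s<R/2$ with deviation $Cs^2/R$. With $R\sim r$ and $s\sim r$ this yields only $O(r)$, not $O(r^2)$. To get $Cr^2$ flatness on an $r$-window the convex graph must live at \emph{unit} scale. The paper does exactly this: it takes the convex hull $V$ of $\Omega\cap B_1$, so Lemma \ref{lem.quasiconvex} gives $\dist(\partial V,\partial(\Omega\cap B_1))\le C\omega(2)\le C\omega_0$; the flat spot is then found inside $V$ (implicitly, via Lemma \ref{lem.ConvexBoundaryLayer}), and the combined geometric error is $O(r^2+\omega_0)$ rather than your $O(r^2+r\omega(2r))$.

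\textbf{Missing Lipschitz decay in Step 2.} To invoke Lemma \ref{lem.comparison.flat} for $v$ on $D_r(z_r)$ you need $|v|\le C\tilde\delta/r$ on all of $\partial D_r(z_r)$. On the spherical portion inside $\Omega$ you have $v=u$, so this forces $|u(x)|\le C\delta(x)/r$ there---a boundary Lipschitz estimate for $u$ in the \emph{quasiconvex} domain $\Omega$. In the convex proof this is supplied by Lemma \ref{lem.Du-Convex}; no quasiconvex analogue is available in the paper and you do not prove one. The paper sidesteps this entirely by a more modular route: it solves an auxiliary problem $\tilde u$ on the genuinely convex hull $V$ and applies Lemma \ref{lem.ConvexBoundaryLayer} to $\tilde u$ as a black box (so Lemma \ref{lem.Du-Convex} is used legitimately inside that lemma). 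The transfer back to $u$ then uses only the De Giorgi--Nash \emph{H\"older} estimate, $\|u-\tilde u\|_{L^\infty}\le C\omega_0^\sigma$, which matches the lower bound $|\tilde u|\ge c_N\tilde\delta^\alpha$ (with $\alpha=\alpha(L)$ coming from Lemma \ref{lem.cone}, not $\alpha=1$). Thus no Lipschitz control on $u$ near $\partial\Omega$ is ever needed.
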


\begin{proof}
    Assume $J_u(0,4) = 1$.
    Let $V$ be the convex hull of $B_{1,+} = B_{1}\cap \Omega$. By Lemma \ref{lem.quasiconvex},
    \begin{equation}\label{est.B12-V}
        \text{dist}(\partial B_{1,+}, V) \le \omega(2) \le \omega_0.
    \end{equation}
    Let $\tilde{u}$ be the $A$-harmonic function in $V$ such that $\tilde{u} = u$ on $\partial V \cap \Omega $ and $\tilde{u} = 0$ on $\partial V \cap \Omega^c$.
    By \eqref{est.B12-V}, the maximum principle and H\"{o}lder continuity of $\tilde{u}$ (i.e., De Giorgi-Nash estimate), 
    \begin{equation*}
    \begin{aligned}
        \norm{ \tilde{u} - u }_{L^\infty({B}_{1,+}) } 
        &\le \norm{\tilde{u}}_{L^\infty(\partial\Omega\cap B_{1})} \\
        &\le C\omega_0^\sigma \norm{\tilde{u}}_{L^\infty(V)} \\
        &\le C\omega_0^\sigma \norm{{u}}_{L^\infty(\Omega\cap\partial B_{1})} \\
        &\le C\omega_0^\sigma J_u(0,4)^{1/2} \le C\omega_0^\sigma.
    \end{aligned}
      \end{equation*}
    Hence, if $\omega_0$ is sufficiently small (depending on $N$), we have
    \begin{equation*}
        \qquad N_{\tilde{u}}(0,\frac14) \le N_1,
    \end{equation*}
    where $N_1$ depends only on $N$ (indeed, similar to the proof of \eqref{est.Doubling_v}, one can show $N_1 \le 2N+3$).
    By Lemma \ref{lem.Jux1-x0}, if $\omega_0$ is sufficiently small, there exists $x_0$ on $\partial V \cap B_{1/16}$ such that $|x_0| \le \omega_0$ and
    \begin{equation*}
        \qquad N_{\tilde{u}}(x_0,\frac{1}{8}) \le N_2,
    \end{equation*}
    where $N_2$ depends only on $N$.
    It follows from Lemma \ref{lem.ConvexBoundaryLayer} that there exists $\mu = \mu(d,L,N)$, $\alpha = \alpha(L)$ and $z\in \partial V \cap B_{1/40}$ such that 
    \begin{equation*}
    |\tilde{u}(x)| \ge c_N \tilde{\delta}(x)^\alpha, \quad \text{for all } x\in \tilde{B}_{\mu,+}(z),
    \end{equation*}
    where $\tilde{\delta}(x) = \text{dist}(x,\partial V)$ and $\tilde{B}_{\mu,+}(z) = V \cap B_{\mu}(z)$.

    Consequently, we have
    \begin{equation*}
        |u(x)| \ge c_N \tilde{\delta}^\alpha(x) - C_N \omega_0^\sigma.
    \end{equation*}
    Thus $|u(x)| > 0$ if $\tilde{\delta}(x)>C_N \omega_0^{\sigma /\alpha}$. Note that $\delta(x) < \tilde{\delta}(x)$. Let $\rho$ be the constant in Lemma \ref{lem.boundary layer} corresponding to $M = C_d e^{72N + C_d}$, where $C_d$ is an absolute constant depending only on $d$ and $M$ will be shown how to be determined. Now choose $\omega_0$ sufficiently small such that $C_N \omega_0^{\sigma/\alpha} < \frac12 \rho \mu$, then
    \begin{equation*}
        |u(x)| > 0, \quad \text{if } x\in \tilde{B}_{\mu,+}(z) \text{ and } \delta(x) > \frac12 \rho \mu.
    \end{equation*}
    Due to \eqref{est.B12-V}, for $\omega_0<\frac14 \rho\mu$, there exists some $y\in \partial\Omega \cap B_{1/40}$ such that $|y-z|\le \omega_0 < \frac14 \rho \mu$. Thus, $B_{\frac12 \mu,+}(y)  \subset B_{\mu,+}(z)$. 
    It follows that
    \begin{equation}\label{est.nozero.Bmu}
        |u(x)| > 0 \quad \text{if } x\in B_{\frac12 \mu,+}(y) \text{ and } \delta(x) > \frac12 \rho \mu.
    \end{equation}

    Finally, fixing $\mu$ and $\rho$ above, we show how $M$ is determined and Lemma \ref{lem.boundary layer} is applied to conclude the result. By \eqref{est.boundedN} and a standard argument (the details are given in Lemma \ref{lem.MoveCenter}), we have
    \begin{equation}\label{est.N1/10-1/5}
        \sup_{1/10<r\le 1/5} N_u(x,r) \le 12 N+C_d, \quad \text{for all } x\in B_{1/40,+}(0),
    \end{equation}
    where $C_d$ depending only on $d$. Let $m = m(d,N,L)$ be the smallest integer such that $2^{-m}  \le \mu \ell/100$, where $\ell = \frac{1}{1+L}$. Let $\e = \e(d,N,L)>0$ be a small number such that $(1+\e)^m \le 2$. Now let $\omega_0,\gamma_0$ be sufficiently small such that the almost monotonicity of doubling index and \eqref{est.N1/10-1/5} imply
    \begin{equation*}
    \begin{aligned}
        \sup_{ \mu  \ell/1000 <r\le 1/5} N_u(x,r) & \le \sup_{2^{-m}/10<r\le 1/5} N_u(x,r) \\
        & \le (1+\e)^{m-1} \sup_{1/10<r\le 1/5} (N_u(x,r)+1) \\
        & \le 24 N+C_d.
    \end{aligned}
    \end{equation*}
    Applying the same argument as \eqref{est.DoublingM}, we obtain
    \begin{equation}\label{est.Doubling>Bmu}
        \sup_{ \mu  \ell/500 <r\le 1/10} \frac{\int_{B_{2r}(x)} u^2 }{\int_{B_{r}(x)} u^2} \le C_d e^{72N + C_d} = M, \quad \text{for all } x\in B_{1/40,+}(0).
    \end{equation}
    In view of our choice of $\rho$ (corresponding to $M$), \eqref{est.nozero.Bmu} and \eqref{est.Doubling>Bmu}, we can apply Lemma \ref{lem.boundary layer} to conclude that $|u|>0$ in $B_{\frac18 \mu,+}(y)$.
\end{proof}

To end this section, we restate the above lemma in terms of cuboids.
\begin{lemma}\label{lem.0Absence}
    Let $\Omega$ be a quasiconvex Lipschitz domain whose local boundary near $0\in \partial \Omega$ is given by $x_d = \phi(x')$ as in Section \ref{sec.Domain Decomposition} and $|\nabla \phi| \le L$. Let $N> 0$. There exist $k_0 \ge 5$, $\gamma_0>0$ and $\omega_0>0$ (depending on $N,d$ and $L$) such that the following statement holds. Assume $\frac12 I \le A \le 2I, |\nabla A| \le \gamma_0, \omega(1) \le \omega_0$ and $u$ is an $A$-harmonic function in $B_{1,+}$ with $u=0$ on $B_1\cap \partial \Omega$. Let $Q \subset B_{1/64}$ be a boundary cuboid centered on $\partial \Omega$ and $N_u^*(Q)\le N$. Then for any $k\geq k_0$, there exists $q\in \mathcal{B}_k(Q)$ such that $Z(u) \cap q = \varnothing$.
\end{lemma}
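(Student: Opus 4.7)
The plan is to reduce the claim to Lemma \ref{lem.ZerosAbsence} by rescaling the boundary cuboid $Q$ to unit scale, and then choosing $k_0$ so that some member of $\mathcal{B}_k(Q)$ fits inside the zero-free ball produced by that lemma.

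First I would rescale. Let $x_Q \in \partial\Omega$ be the center of $Q$ and $s(Q)$ its horizontal side length. Set $r_* = c\, s(Q)$ for a small absolute constant $c$ chosen so that $B_{32 r_*}(x_Q) \subset B_1$ (possible since $x_Q \in Q \subset B_{1/64}$ and $s(Q) \le \ell(Q) \le 1/32$), and define $\tilde u(z) = u(x_Q + r_* z)$, $\tilde A(z) = A(x_Q + r_* z)$ on $\tilde \Omega = r_*^{-1}(\Omega - x_Q)$. Then $\tilde u$ is $\tilde A$-harmonic in $\tilde B_{16,+}$ and vanishes on $\partial \tilde\Omega \cap B_{16}$; moreover $|\nabla \tilde A| \le r_* \gamma_0 \le \gamma_0$, and the rescaled quasiconvexity modulus $\tilde\omega(s) = \omega(r_* s)$ satisfies $\tilde\omega(32) = \omega(32 r_*) \le \omega(1) \le \omega_0$.

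To control the doubling index I use $N_u^*(Q) \le N$, which gives $N_u(x_Q, \ell(Q)) \le N$. Since $\ell(Q)/r_*$ is a constant depending only on $d, L$, Remark \ref{rmk.doubling} (with $\gamma_0, \omega_0$ chosen small enough depending on $N, d, L$) allows me to iterate the almost monotonicity a fixed number of times to conclude $N_{\tilde u}(0, 1/2) = N_u(x_Q, r_*/2) \le 2N+1$. Shrinking $\gamma_0, \omega_0$ once more if necessary, I can then apply Lemma \ref{lem.ZerosAbsence} to $\tilde u$ with $2N+1$ in place of $N$, which yields $\theta = \theta(N, d, L) > 0$ and $\tilde y \in \partial \tilde\Omega \cap B_{1/8}$ with $|\tilde u| > 0$ on $\tilde B_{\theta,+}(\tilde y)$. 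Undoing the rescaling, $|u| > 0$ on $B_{\theta r_*, +}(y)$ for $y = x_Q + r_* \tilde y \in \partial \Omega$ with $|y - x_Q| \le r_*/8$.

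Finally I would select the cuboid. Since $|y' - x_Q'| \le r_*/8 = c\, s(Q)/8 < s(Q)/2$, the horizontal projection $y'$ lies strictly inside $\pi(Q)$, so there is a unique $q \in \mathcal{B}_k(Q)$ with $y' \in \pi(q)$. A direct geometric estimate using the $L$-Lipschitz graph and $\ell(q) = 2^{-k}\ell(Q)$ shows that $|z - y| \le C'(d, L)\, 2^{-k} s(Q)$ for every $z \in q$. Choosing $k_0$ so that $C'(d, L)\, 2^{-k_0} \le c\,\theta$ makes $q \subset B_{\theta r_*, +}(y)$ whenever $k \ge k_0$, and hence $Z(u) \cap q = \varnothing$. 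The main subtlety is threading the smallness parameters: $\gamma_0$ and $\omega_0$ must be simultaneously small enough for the almost monotonicity iteration and for the inner hypotheses of Lemma \ref{lem.ZerosAbsence} at the inflated doubling level $2N+1$, but both restrictions depend only on $N, d, L$, matching the claimed dependence.
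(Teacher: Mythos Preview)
Your proposal is correct and matches the paper's intent: the paper presents Lemma~\ref{lem.0Absence} simply as a restatement of Lemma~\ref{lem.ZerosAbsence} in cuboid language, without giving a separate argument, and your rescaling-plus-cuboid-selection is precisely the routine translation that is implicit there. The parameter threading you describe (almost monotonicity at scale $\ell(Q)$ down to $r_*/2$, then Lemma~\ref{lem.ZerosAbsence} at level $2N+1$, then $k_0$ from $\theta$) is the expected unpacking.
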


\section{Estimate of nodal sets}
In this section, we prove the main theorem by combining Lemma \ref{lem.dropN} and Lemma \ref{lem.0Absence}.
The proof is similar to \cite{LMNN21}.

\subsection{Nodal sets of $A$-harmonic functions}
We first recall the interior estimate of nodal sets for analytic or Lipschitz coefficients.

\begin{lemma}\label{lem.interior nodal}
    There exists $r_0 = r_0(A)>0$ such that if $q$ is a cuboid with $s(q)<r_0$ and $u$ is $A$-harmonic in $(2d\Lambda) q$, then
    \begin{equation}\label{inter-nodal}
        \mathcal{H}^{d-1}(Z(u) \cap q) \le C_0 N_u^{*}(q)^\beta s(q)^{d-1},
    \end{equation}
    where $\beta = 1$ if $A$ is real analytic and $\beta>1$ if $A$ is Lipschitz.
\end{lemma}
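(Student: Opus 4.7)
The plan is to reduce this to the classical interior nodal set estimates of Donnelly--Fefferman (real analytic case) and Logunov (Lipschitz case), which are stated for balls and in terms of the interior doubling index or frequency function. The cuboid $q$ has diameter $\ell(q) \simeq s(q)$ and is contained in the ball $B_{\ell(q)}(x_q)$ centered at its center $x_q$, while the enlarged set $(2d\Lambda)q$ contains a ball $B_{R}(x_q)$ with $R \simeq s(q)$ that is large enough to run the standard doubling/frequency machinery.

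First I would rescale. Set $v(y) = u(x_q + s(q)\, y)$ and $\widetilde{A}(y) = A(x_q + s(q)\, y)$, so that $v$ is $\widetilde{A}$-harmonic on a ball of radius comparable to $1$, with $\widetilde{A}$ uniformly elliptic with the same constants and with Lipschitz constant $s(q)\gamma$; when $s(q)<r_0$ this can be made as small as needed. After this rescaling, $s(q)^{d-1}$ appears as the natural scaling factor for the $(d-1)$-Hausdorff measure of nodal sets, so it suffices to prove $\mathcal{H}^{d-1}(Z(v)\cap \tilde q) \le C_0 N_v^*(\tilde q)^\beta$ with $\tilde q$ the unit cuboid.

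Next I would translate the doubling index $N_v^*(\tilde q)$, which is defined via the weighted integrals $J_v$ from Section~3, into the classical quantity used in \cite{DF88} and \cite{L18b}. Since $\frac12 I \le A(x_q)^{-1} \le 2 I$ (up to an affine change of coordinates we may even assume $A(x_q)=I$), the set $E(x_0,r)$ is comparable to a Euclidean ball, $\mu(x_0,\cdot)$ is comparable to $1$, and $J_v(x_0,r)$ is comparable to the ordinary $L^2$ integral over $B_r(x_0)$. Therefore $N_v^*(\tilde q)$ controls, up to an additive and multiplicative constant depending only on $A$, the usual doubling index $\log \bigl( \int_{B_{2r}(x)} v^2 / \int_{B_r(x)} v^2 \bigr)$ for all $x\in \tilde q$ and $r \simeq 1$. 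The almost-monotonicity (in its interior, purely local form, which requires no boundary condition) then propagates this control to all smaller scales inside $\tilde q$.

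Finally I would cover $\tilde q$ by a bounded number of balls $B_{r_i}(x_i)$ with $B_{2r_i}(x_i) \subset (2d\Lambda)\tilde q$ and apply the interior result: for $A$ real analytic, Donnelly--Fefferman \cite{DF88} gives
\begin{equation*}
    \mathcal{H}^{d-1}(Z(v)\cap B_{r_i}(x_i)) \le C N_i\, r_i^{d-1},
\end{equation*}
and for $A$ merely Lipschitz, Logunov \cite{L18b} gives the same bound with $N_i^\beta$ in place of $N_i$ for some $\beta=\beta(d)>1$, where $N_i$ is the doubling index of $v$ in $B_{r_i}(x_i)$. Summing over the finite cover and using $N_i \le C(N_v^*(\tilde q)+1)$ from the previous step yields the claim for $v$, and undoing the rescaling restores the factor $s(q)^{d-1}$. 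The only real obstacle is the bookkeeping in matching the weighted definition of $J_u$ and $N_u^*$ used here with the unweighted doubling index used in \cite{DF88,L18b}, but this is purely a comparison of equivalent norms on a fixed bounded scale and introduces no new difficulty beyond adjusting constants.
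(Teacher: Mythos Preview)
Your proposal is correct and follows essentially the same route as the paper: reduce to the known interior nodal set bounds of Logunov \cite{L18b} (Lipschitz case) and Donnelly--Fefferman \cite{DF88} (analytic case) by covering $q$ with finitely many balls and observing that the weighted doubling index $N_u^*(q)$ controls the standard $L^2$ doubling index on each of them. The only additional detail the paper supplies is, in the analytic case, an explicit sketch of the holomorphic extension step (analytic hypoellipticity $\Rightarrow$ complexification $\Rightarrow$ \cite[Proposition~6.7]{DF88}) rather than citing \cite{DF88} as a black box; your outline is otherwise the same.
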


\begin{remark}\label{rmk.analytic}
 In the case of Lipschitz coefficients in the above lemma,  the conclusion in (\ref{inter-nodal}) follows from {\cite[Theorem 6.1]{L18b}}.
In the case of analytic coefficients, the estimate is essentially contained in \cite{DF88}. Here we briefly sketch the proof in our setting. Let the origin to be  the center of $q$. By the analytic hypoellipticity, we have
\begin{align}\label{est.Dau0}
\frac{|\partial^\alpha u(0)|}{\alpha !} \leq {\Big( \frac{C_1}{r} \Big)^{|\alpha|}} \|u\|_{L^\infty (B_r)},
\end{align}
for all $r\in (0,r_0]$, where $\alpha$ is a multi-index, and $C_1$ and $r_0$ depend on the analyticity of $A$.
In terms of the Taylor series of $u(x)$ in $B_r$, we can extend $u$ to be a holomorphic function in $\mathbb{B}_{c_1 r}: = \{z\in \mathbb{C}^d: |z| < c_1r \}$ such that
\begin{align}
\sup_{z\in \mathbb{B}_{c_1 r}}|u(z)|\leq C_2 \norm{u}_{L^\infty(B_r)},
\end{align}
where $c_1<1$ and $C_2$ depending on $C_1$. By a finite number of iteration of interior doubling inequalities, we have
\begin{align*}\|u\|_{L^\infty(B_r)}\leq e^{C N_u^{*}(q)}\|u\|_{L^\infty(B_{cr/2})}.
\end{align*} 
Combining the above two estimates, we obtain
\begin{align}
\sup_{z\in \mathbb{B}_{c_1r} }|u(z)|\leq e^{C N_u^{*}(q)}\|u\|_{L^\infty(B_{c_1 r/2})}.
\end{align}
Thus, an application of \cite[Proposition 6.7]{DF88} yields
\begin{equation}\label{est.Nodal.Bc2r}
        \mathcal{H}^{d-1}(Z(u) \cap B_{c_2 r}) \le C_3 N_u^{*}(q) { r^{d-1} },
\end{equation}
where $c_2<1$ is a constant depending on $C_1$. Note that \eqref{est.Nodal.Bc2r} holds for all $r\le \min \{r_0, s(q) \}$. Hence,
applying \eqref{est.Nodal.Bc2r} to a finite number of balls that cover $q$, we obtain (\ref{inter-nodal}) for real analytic coefficients.

The constant $C_0$ in \eqref{inter-nodal} (depending on $C_1$ in \eqref{est.Dau0}, not on specific $u$) can be quantified in terms of the quantitative analyticity property of $A$, such as the radius of convergence for the Taylor series. In our later application of Lemma \ref{lem.interior nodal}, $q$ could be small cuboids very close to the boundary. To guarantee that we have a uniform constant $C_0$ in \eqref{inter-nodal}, we need a control of the analyticity of the coefficients as $q$ approaching the boundary. A simple way to achieve this is to assume that $A$ is analytic in $\overline{\Omega}$, or in other words, $A$ is analytic up to the boundary (the Taylor series of $A$ converges in a neighborhood of any boundary point). Thus, the compactness of $\overline{\Omega}$ will give a uniform bound of $C_0$ in \eqref{inter-nodal}. Nevertheless, for the sake of brevity, in this paper we will simply say that $A$ is analytic (or real analytic). 
\end{remark}

Define $N_u^{**}(Q) = N_u^*(Q) + 1.$
\begin{theorem}\label{thm.Q}
     Let $u$ be $A$-harmonic in $B \cap \Omega$ and vanishing on $B\cap \partial \Omega$. Let $Q$ be a standard boundary cuboid such that $Q\subset \frac{1}{10} B$. Then there exists $r_0 = r_0(A, \Omega)>0$ and $C_0 = C_0(A,\Omega) > 0$ such that if $s(Q) \le r_0$,
    \begin{equation}\label{est.ZuQ}
    \mathcal{H}^{d-1}(Z(u) \cap Q) \le C_0 N_u^{**}(Q)^\beta s(Q)^{d-1}.
\end{equation}
where $\beta = 1$ if $A$ is real analytic and $\beta>1$ if $A$ is Lipschitz.
\end{theorem}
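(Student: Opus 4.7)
The plan is to follow the dyadic cuboid iteration of \cite{LMNN21}, feeding the two boundary dichotomies of the previous sections (Lemma \ref{lem.dropN} and Lemma \ref{lem.0Absence}) into the interior estimate (Lemma \ref{lem.interior nodal}). After translating the center of $Q$ to $0$ and rescaling $x \mapsto s(Q) x$, Facts 1 and 2 at the start of Section 3 let me arrange unit scale with $\omega(1)$ and $|\nabla A|$ arbitrarily small; I then fix $k$ large enough and $\omega_0,\gamma_0$ small enough that both dichotomies apply to the rescaled picture. Define
\begin{equation*}
    F(N) := \sup s(Q)^{-(d-1)} \mathcal{H}^{d-1}(Z(u) \cap Q),
\end{equation*}
the supremum being taken over all admissible configurations (in the sense of the theorem) with $N_u^{**}(Q) \le N$. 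The goal is $F(N) \le C N^\beta$, which is precisely \eqref{est.ZuQ}.

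Next I would split $Q$ via the standard decomposition of Section \ref{sec.Domain Decomposition} into $2^{k(d-1)}$ boundary cuboids $\mathcal{B}_k(Q)$ and at most $C_d 2^{kd}$ interior cuboids $\mathcal{I}_k(Q)$. Each interior $q \in \mathcal{I}_k(Q)$ satisfies $\dist(q,\partial\Omega) \gtrsim s(q)$, so Lemma \ref{lem.interior nodal} applies; combined with the almost monotonicity of Remark \ref{rmk.doubling} and the center-shift Lemma \ref{lem.Jux1-x0} (to transfer $N_u^*(q)$ to $N_u^{**}(Q)$), the total interior contribution is bounded by $C_k N^\beta s(Q)^{d-1}$. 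For the boundary part I invoke the dichotomy: if $N_u^*(Q) > N_0$, Lemma \ref{lem.dropN} produces $q^* \in \mathcal{B}_k(Q)$ with $N_u^*(q^*) \le N/2$ while every other $q \in \mathcal{B}_k(Q)$ satisfies $N_u^{**}(q) \le (1+\epsilon) N$ by almost monotonicity; if instead $N_u^*(Q) \le N_0$, Lemma \ref{lem.0Absence} produces $q^*$ with $Z(u) \cap q^* = \varnothing$, all other $q$ still satisfying $N_u^{**}(q) \le (1+\epsilon) N_0$. Summing over $\mathcal{B}_k(Q)$ and using $\sum_{q \in \mathcal{B}_k(Q)} s(q)^{d-1} = s(Q)^{d-1}$ yields, in the large-$N$ case, the functional inequality
\begin{equation*}
    F(N) \le (1 - 2^{-k(d-1)}) F((1+\epsilon) N) + 2^{-k(d-1)} F(N/2) + C_k N^\beta,
\end{equation*}
together with the analogous inequality for $N \le N_0$ in which $F(N/2)$ is replaced by $0$.

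From here the argument is a bootstrap: a coarse a priori bound $F(N) \le C_* N^{\beta_*}$ with some finite $\beta_* \ge \beta$ follows from standard regularity and a bare counting of sub-cuboids; then, choosing $\epsilon$ small (depending on $k$) so that $(1 - 2^{-k(d-1)})(1+\epsilon)^{\beta_*} < 1$, iteration of the functional inequality drives the exponent down to $\beta$ and yields $F(N) \le C N^\beta$, establishing \eqref{est.ZuQ}. The main obstacle I anticipate is the quantitative bookkeeping of the $(1+\epsilon)$ losses: they originate both from the almost monotonicity of Lemma \ref{lem.Nu.A=I} and from the center--radius shifts of Lemma \ref{lem.Jux1-x0}, and they must be absorbed by taking $k$ large while simultaneously respecting the smallness constraints on $\omega_0,\gamma_0$ required by both dichotomy lemmas. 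A secondary but important technical point is that the constant $C_0$ in Lemma \ref{lem.interior nodal} must remain uniform for sub-cuboids hugging $\partial\Omega$; in the analytic case $\beta = 1$ this is exactly why $A$ is assumed analytic up to the boundary, as explained in Remark \ref{rmk.analytic}.
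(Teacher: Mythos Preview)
Your strategy is the same as the paper's---combine the two boundary dichotomies (Lemma \ref{lem.dropN} and Lemma \ref{lem.0Absence}) with the interior estimate (Lemma \ref{lem.interior nodal}) inside the standard cuboid decomposition---but your implementation of the iteration has a real gap.

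The problem is the function $F(N)$. As you define it, $F(N)$ is a supremum over \emph{all} admissible scales simultaneously, and you have no a priori reason that $F(N)<\infty$. Your functional inequality
\[
F(N) \le (1 - 2^{-k(d-1)}) F((1+\epsilon) N) + 2^{-k(d-1)} F(N/2) + C_k N^\beta
\]
bounds $F(N)$ in terms of $F$ at a \emph{larger} argument $(1+\epsilon)N$, so it cannot be iterated downward in $N$; and your proposed escape---a ``coarse a priori bound $F(N)\le C_* N^{\beta_*}$ from standard regularity''---is precisely the boundary nodal estimate you are trying to prove, so invoking it is circular. Even granting such a bound, plugging it into the functional inequality gives only a constant improvement, not an exponent improvement, because the dominant term carries the factor $(1+\epsilon)^{\beta_*}$ at the larger argument.

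The paper avoids this by \emph{not} taking a supremum over scales. It fixes $u$ and a compact set $K\subset\Omega$, and proves
\[
\mathcal{H}^{d-1}(Z(u)\cap Q\cap K)\le C_0\,N_u^{**}(Q)^\beta s(Q)^{d-1}
\]
by induction on the scale of $Q$, from small cuboids to large. The base case is trivial---sufficiently small boundary cuboids miss $K$ entirely---so no a priori finiteness is needed. The inductive step is exactly your computation: the interior cuboids contribute $C_k N_u^{**}(Q)^\beta s(Q)^{d-1}$, all but one boundary subcuboid satisfy $N_u^{**}(q)\le (1+\epsilon)^k N_u^{**}(Q)$, and the distinguished $q_0$ has either $N_u^{**}(q_0)\le \tfrac23 N_u^{**}(Q)$ or $Z(u)\cap q_0=\varnothing$. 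Summing gives a coefficient
\[
\frac{2^{k(d-1)}-1}{2^{k(d-1)}}(1+\epsilon)^{\beta k}+\frac{(2/3)^\beta}{2^{k(d-1)}}<1
\]
in front of $C_0$ (for $\epsilon$ small), so one simply chooses $C_0$ large enough that $C_k+(\text{coefficient})C_0\le C_0$, closing the induction. Finally $K$ exhausts $\Omega$. If you recast your argument as this scale induction with the compact-set base case, it goes through; the $F(N)$ formulation as written does not.
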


\begin{proof}
 Without loss of generality, assume the center of $Q$ is $0 \in \partial \Omega$. By an affine transformation, we may assume $A(0) = I$ and then $Q$ is turned into a rescaled cuboid that is contained in some standard cuboid with comparable size, still denoted by $Q$. Therefore, it is sufficient to prove \eqref{est.ZuQ} in this situation for $s(Q)\le r_0$. 
 
 First let us explain how $r_0$ is determined. Let $N_0$ and $k_0$ be given by Lemma \ref{lem.dropN}, which depends only on $A$ and $\Omega$. Also let $N = N_0$ in Lemma \ref{lem.0Absence} and pick $k$ to be the maximum of $k_0$ in Lemma \ref{lem.dropN} and Lemma \ref{lem.0Absence}. Now, we let $\gamma_0$ and $\omega_0$ be given by Lemma \ref{lem.dropN} and Lemma \ref{lem.0Absence}, whichever is smaller. Note that all these parameters depend only on $A$ and $\Omega$. Now, we can perform a rescaling $x\to x/r_1$ such that $\frac12 I \le A \le 2I, |\nabla A| \le \gamma_0$ in $Q$ and $\omega(32)\le \omega_0$ in $B_{1,+}(0)$, where $r_1 = C r_0$ and $Q\subset B_{1/64}(0)$. Note that $r_0$ now depends only on $A$ and $\Omega$. Consequently, both Lemma \ref{lem.dropN} and Lemma \ref{lem.0Absence} apply to $Q$ and all its boundary subcuboids. Again, we only need to show \eqref{est.ZuQ} in this rescaled case since it is scale invariant.

Let $K$ be a compact set $K\subset \Omega$. We would like to show
\begin{equation}\label{est.NodalInQ}
    \mathcal{H}^{d-1}(Z(u) \cap Q \cap K) \le C_0 N_u^{**}(Q)^\beta s(Q)^{d-1},
\end{equation}
for some constant $C_0$ independent of $K$. Definitely if $Q$ is small enough such that $Q\cap K = \varnothing$, then $\mathcal{H}^{d-1}(Z(u) \cap Q \cap K) = 0$. We prove \eqref{est.NodalInQ} by induction from small boundary cuboids to large boundary cuboids.

Let $k\ge 1$ to be determined. Let $(Q, \mathcal{B}_k, \mathcal{I}_k)$ be a standard decomposition as in Section \ref{sec.Domain Decomposition}. Assume for each small boundary cuboid $q\in \mathcal{B}_k$,
\begin{equation*}
    \mathcal{H}^{d-1}(Z(u) \cap q \cap K) \le C_0 N_u^{**}(q)^\beta s(q)^{d-1}.
\end{equation*}
Note that the base case of induction $q\cap K = \varnothing$ is trivial.

Now consider
\begin{equation*}
    \mathcal{H}^{d-1}(Z(u) \cap Q \cap K) \le \sum_{q\in \mathcal{I}_k } \mathcal{H}^{d-1}(Z(u) \cap q) + \sum_{q\in \mathcal{B}_k } \mathcal{H}^{d-1}(Z(u) \cap q \cap K).
\end{equation*}
    By the interior result in Lemma \ref{lem.interior nodal},
    \begin{equation}\label{est.Interior}
        \sum_{q\in \mathcal{I}_k } \mathcal{H}^{d-1}(Z(u) \cap q) \le C_1 \sum_{q\in \mathcal{I}_k } N_u^{**}(q)^\beta s(q)^{d-1} \le C_k N_u^{**}(Q)^\beta s(Q)^{d-1}.
    \end{equation}
    where $\beta = 1$ if $A$ is real analytic and $\beta>1$ if $A$ is Lipschitz. 
For all other boundary cuboids $q\in \mathcal{B}_k$, we have by Remark \ref{rmk.doubling}
\begin{equation*}
    N_u^{**}(q) \le (1+\e)^k N_u^{**}(Q) ,
\end{equation*}
if $Q$ is small enough. Combining Lemma \ref{lem.dropN} and Lemma \ref{lem.0Absence} (with $N = N_0$), we know that there exists $k\ge 1$, depending only on $d,L$, such that there is a cube $q_0 \in \mathcal{B}_k$ such that either $N_u^{*}(q_0) \le \frac12 N_u^{*}(Q)$ or $Z(u) \cap q_0 = \varnothing$. Without loss of generality, assume $N_0 \ge 10$. Since the first case takes place if $N\ge N_0$, thus $N_u^{*}(q_0) \le \frac12 N_u^{*}(Q)$ implies $N_u^{**}(q_0) \le \frac23 N_u^{**}(Q).$
Recall that $\mathcal{B}_k$ has $2^{k(d-1)}$ subcuboids. Applying the induction argument to each boundary subcuboids, we have
\begin{equation}\label{est.bdryZu}
    \begin{aligned}
        & \sum_{q\in \mathcal{B}_k} \mathcal{H}^{d-1}(Z(u) \cap q \cap K) \\
        & = \sum_{q\in \mathcal{B}_k, q\neq q_0} \mathcal{H}^{d-1}(Z(u) \cap q \cap K) + \mathcal{H}^{d-1}(Z(u) \cap q_0 \cap K) \\
        & \le \sum_{q\in \mathcal{B}_k, q\neq q_0} C_0 N_u^{**}(q)^\beta s(q)^{d-1} + C_0 (2/3)^\beta N_u^{**}(Q)^\beta s(q_0)^{d-1} \\
        & \le \bigg\{ \frac{2^{k(d-1)} - 1}{2^{k(d-1)}} (1+\e)^{\beta k} + \frac{(2/3)^\beta }{2^{k(d-1)}} \bigg\} C_0 N_u^{**}(Q)^{\beta} S(Q)^{d-1}.
    \end{aligned}
\end{equation}
Since $\beta \ge 1$ and $k\ge 3$ are fixed, we choose $\e$ small (here we need to make $r_0 = r_0(A,\Omega)>0$ small enough) and $C_0$ large enough such that
\begin{equation*}
    C_k + \bigg\{ \frac{2^{k(d-1)} - 1}{2^{k(d-1)}} (1+\e)^{\beta k} +  \frac{(2/3)^\beta}{2^{k(d-1)}} \bigg\} C_0 \le C_0.
\end{equation*}
Note that $C_0$ does not depend on the compact set $K$. Thus, the combination of \eqref{est.Interior} and \eqref{est.bdryZu}  gives \eqref{est.NodalInQ}, which implies \eqref{est.ZuQ} by letting $K$ exhaust $Q\cap \Omega$.
\end{proof}

\subsection{Dirichlet eigenfunctions}
Let $\Omega$ be a bounded quasiconvex Lipschitz domain. Let $\varphi_\lambda$ be the Dirichlet eigenfunction of $\cL$ corresponding to the eigenvalue $\lambda>0$, namely, $\cL(\varphi_\lambda) = \lambda \varphi_\lambda$ in $\Omega$ and $\varphi_\lambda = 0$ on $\partial \Omega$. Let
\begin{equation*}
    u_\lambda(x,t) = e^{t\sqrt{\lambda}} \varphi_\lambda(x).
\end{equation*}
Then $u_\lambda$ is $\widetilde{A}$-harmonic in $\widetilde{\Omega}: = \Omega \times \R$, where
\begin{equation*}
    \widetilde{A}(x,t) = \begin{bmatrix}
        A(x) & 0\ \\
        0& 1\
    \end{bmatrix}.
\end{equation*}

\begin{lemma}
    If a Lipschitz $\Omega$ is quasiconvex, then $\widetilde{\Omega}$ is also quasiconvex.
\end{lemma}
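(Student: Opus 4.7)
The plan is to verify Definition \ref{def.quasiconvexity} directly for $\widetilde{\Omega}$ by lifting the local coordinate system of $\partial\Omega$ to $\partial\widetilde{\Omega}$ in a trivial way: the $t$-direction is tangential everywhere along $\partial\Omega \times \R$, so the boundary graph of $\widetilde{\Omega}$ is just a $t$-independent extension of the boundary graph of $\Omega$.

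Concretely, I would first fix a point $(x_0,t_0)\in\partial \widetilde{\Omega}$, which forces $x_0\in\partial\Omega$. Applying Definition \ref{def.quasiconvexity} to $\Omega$ at $x_0$, I obtain a rigid transformation $T:\R^d\to\R^d$ sending $x_0$ to $0$ such that, in the new coordinates, $\partial\Omega$ is locally the graph $x_d=\phi(x')$ with $\phi(0)=0$ and
\begin{equation*}
    \phi(x') \ge -|x'|\,\omega(|x'|), \qquad |x'|<r_0.
\end{equation*}
I then define the rigid transformation $\widetilde{T}(x,t)=(T(x),t-t_0)$ on $\R^{d+1}$, which sends $(x_0,t_0)$ to $0$. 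In the new coordinates $(x_1,\dots,x_d,t)$, a neighborhood of $0$ in $\partial\widetilde{\Omega}$ is exactly the set $\{x_d=\phi(x')\}$, with $t$ free.

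The next step is to reinterpret this as a graph in the format required by Definition \ref{def.quasiconvexity}, using the \emph{last} coordinate (call it $\tilde{x}_{d+1}$) as the height variable. Setting $\tilde{x}'=(x',t)\in\R^{d}$ and $\tilde{x}_{d+1}=x_d$, the local boundary is the graph
\begin{equation*}
    \tilde{x}_{d+1} = \tilde{\phi}(\tilde{x}') := \phi(x'),
\end{equation*}
which is clearly Lipschitz with the same constant $L$, and satisfies $\tilde{\phi}(0)=\phi(0)=0$, and the corresponding side $\widetilde{\Omega}\cap B_{r_0}(0)=B_{r_0}(0)\cap\{\tilde{x}_{d+1}>\tilde{\phi}(\tilde{x}')\}$ follows from the same property for $\Omega$. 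Finally, since $|x'|\le|\tilde{x}'|$ and $\omega$ is nondecreasing,
\begin{equation*}
    \tilde{\phi}(\tilde{x}') = \phi(x') \ge -|x'|\,\omega(|x'|) \ge -|\tilde{x}'|\,\omega(|\tilde{x}'|),\qquad |\tilde{x}'|<r_0,
\end{equation*}
which is exactly \eqref{cond.quasiconvex} for $\widetilde{\Omega}$ with the same modulus $\omega$ and radius $r_0$.

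There is essentially no obstacle here; the only thing to be careful about is the choice of coordinates. One might be tempted to keep $x_d$ as one of the ``horizontal'' variables of the new ambient space and use $t$ as the height, but then the boundary is not a graph over $t$ at all. The correct picture is that the quasiconvexity axis at $(x_0,t_0)$ for $\widetilde{\Omega}$ inherits its direction from the axis for $\Omega$ at $x_0$, and the extra $t$-direction is simply absorbed into the tangential variables. Everything else is an inequality that improves (or is preserved) under enlarging the horizontal norm while keeping $\phi$ unchanged.
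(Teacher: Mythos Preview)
Your argument is correct and follows essentially the same route as the paper's proof: both rotate $\Omega$ at $x_0$ to a graph $x_d=\phi(x')$, extend trivially in the $t$-direction to get the graph $\tilde\phi(x',t)=\phi(x')$ for $\widetilde\Omega$, and then use $|x'|\le |(x',t)|$ together with the monotonicity of $\omega$ to verify \eqref{cond.quasiconvex}. The only cosmetic difference is that you are more explicit about relabeling so that the height variable is the last coordinate, whereas the paper keeps $x_d$ as the height and treats $(x',t)$ as the tangential variables without reordering.
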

\begin{proof}
    First note $\partial \widetilde{\Omega} = \partial \Omega \times \R$. Let $(x_0,t_0) \in \partial \widetilde{\Omega}$. Then $x_0 \in \partial \Omega$. 
    Since $\Omega$ is quasiconvex, we can rotate the domain $\Omega$ such that the local graph of $\partial \Omega$ is given by a Lipschitz graph $x_d  = \phi(x')$ with $\phi(x'_0) = 0$ and for $|x'-x'_0|<r_0$,
    \begin{equation*}
        \phi(x') \ge - |x'-x'_0| \omega(|x'-x'_0|).
    \end{equation*}
    Clearly the local graph of $\partial \widetilde{\Omega}$ is given by $x_d = \widetilde{\phi}(x',t) :=\phi(x')$. Thus, for any $|(x',t)-(x'_0,t_0)|<r_0$
    \begin{equation*}
    \begin{aligned}
         \widetilde{\phi}(x',t) & \ge - |x'-x'_0| \omega(|x'-x'_0|) \\
         & \ge - |(x',t)-(x'_0,t_0)| \omega(|(x',t)-(x'_0,t_0)|),
    \end{aligned}
    \end{equation*}
    which, by definition, shows that $\widetilde{\Omega}$ is quasiconvex with the same quasiconvexity modulus.
\end{proof}

The following lemma gives the bound of doubling index for the $\widetilde{A}$-harmonic extension $u_\lambda$.

\begin{lemma}\label{lem.N.lambda}
    Let $\Omega$ be a bounded quasiconvex Lipschitz domain. Let $u_\lambda$ be the $\widetilde{A}$-harmonic extension in $\widetilde{\Omega}$ of the Dirichlet eigenfunction $\varphi_\lambda$. There exists $r_0 = r_0(A,\Omega)>0$ such that if $(x,t)\in \widetilde{\Omega}$ and $0<r<r_0$, then $N_{\varphi_\lambda}((x,t), r) \le C_r\sqrt{\lambda}$, where $C_r$ depends only on $r, A$ and $\Omega$.
\end{lemma}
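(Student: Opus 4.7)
The plan is to exploit the product structure $u_\lambda(x,t) = e^{t\sqrt{\lambda}} \varphi_\lambda(x)$ together with the translation invariance of $\widetilde{\Omega} = \Omega \times \R$ in the $t$-direction. First reduce to points $(x, 0)$; then estimate the doubling index at a single fixed base scale by direct computation of the weighted integrals; finally iterate the almost monotonicity from Section 3 to reach the desired scale $r$.

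\textbf{Step 1 (Reduction via translation).} Since $\widetilde{\Omega}$ is invariant under translations in $t$, and $u_\lambda(y, s+t_0) = e^{t_0\sqrt{\lambda}}\, u_\lambda(y, s)$, the change of variable $s \mapsto s - t$ in the definition \eqref{def.J_u} gives
\begin{equation*}
J_{u_\lambda}((x, t), r) = e^{2t\sqrt{\lambda}}\, J_{u_\lambda}((x, 0), r),
\end{equation*}
so the factor $e^{2t\sqrt{\lambda}}$ cancels in the doubling ratio, yielding $N_{u_\lambda}((x, t), r) = N_{u_\lambda}((x, 0), r)$. It is therefore enough to bound the doubling index at slices $t = 0$.

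\textbf{Step 2 (Computation at a fixed base scale).} For $(x, 0) \in \overline{\widetilde{\Omega}}$, using $u_\lambda^2(y, s) = e^{2s\sqrt{\lambda}} \varphi_\lambda^2(y)$ and Fubini, I would write $J_{u_\lambda}((x, 0), r)$ as a spatial integral of $\varphi_\lambda^2(y)$ against an inner $s$-integral of $e^{2s\sqrt{\lambda}}$ over an interval of length comparable to $\sqrt{r^2 - c|y - x|^2}$. Direct evaluation of the $s$-integral shows that for $\lambda$ large,
\begin{equation*}
\frac{c\, e^{c r \sqrt{\lambda}}}{\sqrt{\lambda}} \int_{B_{cr}(x) \cap \Omega} \varphi_\lambda^2 \;\le\; J_{u_\lambda}((x, 0), r) \;\le\; \frac{C\, e^{C r \sqrt{\lambda}}}{\sqrt{\lambda}} \int_{B_{Cr}(x) \cap \Omega} \varphi_\lambda^2.
\end{equation*}
Fix a base scale $r_1$ depending only on $A$ and $\Omega$ inside the range in which Lemma~\ref{lem.Nu.A=I} applies. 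Then the doubling ratio $J(2r_1)/J(r_1)$ is controlled by $e^{C r_1 \sqrt{\lambda}}$ times a purely spatial ratio of integrals of $\varphi_\lambda^2$ over concentric balls.

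\textbf{Step 3 (Global propagation and iteration).} Normalize $\norm{\varphi_\lambda}_{L^2(\Omega)} = 1$ so the numerator of the spatial ratio is at most $1$. The lower bound on $\int_{B_{c r_1}(x) \cap \Omega} \varphi_\lambda^2$ is then obtained by propagating the global $L^2$ mass along a chain of overlapping balls of radius $\sim r_1$ in $\widetilde{\Omega}$: each step uses the three-ball inequality \eqref{est.3ball.A=I} and, near $\partial \widetilde{\Omega}$, the quantitative Cauchy uniqueness of Lemma~\ref{lem.Cauchy}, both applied to the $\widetilde{A}$-harmonic function $u_\lambda$. The length of the chain depends only on $\Omega$ and $r_1$, and each step costs a factor $e^{C\sqrt{\lambda}}$, which produces the base-scale estimate $N_{u_\lambda}((x, 0), r_1) \le C_{r_1}\, \sqrt{\lambda}$. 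Iterating the almost monotonicity (Lemma~\ref{lem.Nu.A=I} together with Remark~\ref{rmk.doubling}) across the $O(\log(r_1/r))$ dyadic scales between $r$ and $r_1$ then yields $N_{u_\lambda}((x, 0), r) \le C_r\, \sqrt{\lambda}$.

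\textbf{Main obstacle.} The delicate point is Step 3: the spatial ratio of $\varphi_\lambda^2$-integrals could a priori be large when $\varphi_\lambda$ is very small near $x$, and avoiding circular reasoning requires genuinely using the $\widetilde{A}$-harmonicity of the extension $u_\lambda$ in $\widetilde{\Omega}$ (via the three-ball and Cauchy uniqueness tools of Sections 3 and 4, exploiting propagation in both the spatial and $t$ directions) rather than invoking any a priori doubling property of $\varphi_\lambda$ alone. The boundary case $x \in \partial\Omega$ uses the quasiconvexity of $\Omega$ through the boundary three-ball estimate built in Section 3.
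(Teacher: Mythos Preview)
Your approach is correct and is precisely the standard argument the paper has in mind (the paper's own proof is just a reference to \cite{LMNN21,DF88,DF90} for the chain-of-balls three-ball argument, with details omitted). One minor point: the appeal to Lemma~\ref{lem.Cauchy} in Step~3 is unnecessary, since $u_\lambda$ vanishes on $\partial\widetilde{\Omega}$ and $\widetilde{\Omega}$ is quasiconvex, so the boundary three-ball inequality from Section~3 already handles balls touching $\partial\widetilde{\Omega}$.
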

\begin{proof}
    The proof is standard. We refer to \cite{LMNN21} for Lipschitz domains by using the three-ball inequality on a chain of balls and to \cite{DF88,DF90} for the case of smooth domains. We skip the details.
\end{proof}

Finally, we prove Theorem \ref{thm.main}.
\begin{proof}[Proof of Theorem \ref{thm.main}]
    Let $\varphi_\lambda$ be the Dirichlet eigenfunction corresponding to the eigenvalue $\lambda>0$. Let $u_\lambda = e^{t\sqrt{\lambda}} \varphi_\lambda$ be the $\widetilde{A}$-harmonic extension of $\varphi_\lambda$ in $\widetilde{\Omega} = \Omega\times \R$. Note that $Z(u_\lambda) = Z(\varphi_\lambda) \times \R$. Thus it suffices to estimate the nodal set of $u_\lambda$ in $\Omega \times [-1,1] \subset \widetilde{\Omega}$.

    Let $r_0$ be as in Theorem \ref{thm.Q} or Lemma \ref{lem.N.lambda}, whichever is smaller. Let $\Omega_I = \{ x\in \Omega: \delta(x) > cr_0 \}$. Then $\Omega_I \times [-1,1]$ can be covered (with finite overlaps) by a sequence of balls $B_i$ with radius $cr_0/100$ such that $10 B_i \subset \Omega\times [-2,2]$. Note that the number of these balls depends only on $cr_0, d$ and $\Omega$. By Lemma \ref{lem.interior nodal} and Lemma \ref{lem.N.lambda}
    \begin{equation*}
        \mathcal{H}^{d}(Z(u_\lambda) \cap \Omega_I\times [-1,1]) \le \sum_{i} \mathcal{H}^{d}(Z(u_\lambda) \cap B_i) \le C \lambda^{\beta/2}.
    \end{equation*}
    On the other hand, let $\Omega_B = \Omega \setminus \Omega_I$. Then $\Omega_B\times [-1,1]$ can be covered by a family of boundary balls $B_k'$ centered on $\partial \Omega \times [-1,1]$ with radius $10cr_0$. The number of these balls depends only on $cr_0, d$ and $\Omega$. For each of these ball $B_k'$, $B'_k \cap \partial \widetilde{\Omega}$ can be rotated into a local graph such that $B'_k$ is entirely contained in a cuboid $Q_k$ with $s(Q_k)\le r_0$. Thus, Theorem \ref{thm.Q} and Lemma \ref{lem.N.lambda} show
    \begin{equation*}
    \begin{aligned}
        \mathcal{H}^{d}(Z(u_\lambda) \cap \Omega_B \times [-1,1]) & \le \sum_k \mathcal{H}^{d}(Z(u_\lambda) \cap Q_k) \\
        & \le \sum_k CN^{**}(Q_k)^\beta s(Q_k)^{d} \\
        & \le C(1+\lambda)^{\beta/2} \\
        & \le C\lambda^{\beta/2}.
    \end{aligned}
    \end{equation*}
    Hence, $\mathcal{H}^{d}(Z(u_\lambda) \cap \Omega\times [-1,1]) \le C\lambda^{\beta/2}.$ This implies the desired estimate of $\mathcal{H}^{d-1}(Z(\varphi_\lambda))$.
\end{proof}

\appendix

\section{}\label{Appendix-A}


\subsection{A quasiconvex Lipschitz curve that is nowhere convex or $C^1$}\label{Appendix.A1}
We construct a quasiconvex Lipschitz curve in $(0,1)$ which is neither $C^1$ nor convex in any subinterval of $(0,1)$. First let $\{q_k \}_{k=1}^\infty$ be the list of all rational number in $[0,1]$ and define a nonnegative Radon measure by
\begin{equation*}
    \mu = \sum_{k=1}^\infty 2^{-k} \delta_{q_k},
\end{equation*}
where $\delta_{q_k}$ is the Dirac measure at $q_k$.
Now let $f(t) = \mu((0,t))$. Note that $f$ is a bounded nondecreasing function with $f(0) = 0$ and $f(1) = 1$. Moreover, $f$ is not continuous in any subinterval of $(0,1)$. Let
\begin{equation*}
    \phi(x) = \int_0^x f(t) dt -x^2.
\end{equation*}
Since $\int_0^x f(t) dt$ is convex and $x^2$ is $C^1$, it is easy to see that $\phi(x)$ is quasiconvex and Lipschitz.

Since $f$ is not continuous in any subinterval of $(0,1)$, then clearly, $\phi$ is not $C^1$ in any subinterval of $(0,1)$. Now we show that $\phi$ is not convex in any subinterval of $(0,1)$. It is sufficient to show that $\phi''$ is not a nonnegative Radon measure on any subinterval of $(0,1)$. In fact,
\begin{equation*}
    \phi'' = -2 + \sum_{k=1}^\infty 2^{-k} \delta_{q_k}.
\end{equation*}
Divided $(0,1)$ into $2^j$ equal subintervals with length $2^{-j}$. For any such interval $I$, if there is no $q_k$ with $1\le k\le j$ dropping in $I$, then
\begin{equation*}
    \phi''(I) \le -2|I| + \sum_{k=j+1}^\infty2^{-k} \le -2^{-j} < 0.
\end{equation*}
In other words, there are at most $j$ subintervals (out of $2^j$ subintervals in total) such that $\phi''(I) \ge 0$. As $j$ approaching infinity,
we see that $\phi''$ cannot be nonnegative in any nonempty subintervals.

\subsection{Proof of Proposition \ref{prop.monotone}}
\label{appendixA2}
We adapt the arguments in \cite{AE97}, \cite{L91} with emphasis on the role of Lipschitz coefficient.
 Recall that we assumed $A(0)=I$ and defined
\begin{align} 
\mu(x)=\frac{x\cdot A(x)x }{|x|^2}. 
\end{align}
Using the standard assumption on $A$, we can check that
\begin{align}
|\nabla \mu|\leq C\gamma \quad \ \mbox{and} \ \quad \Lambda^{-1}\leq \mu(x)\leq \Lambda.
\label{Lipeta}
\end{align}
 We choose $ w(x)=\frac{A(x)x}{\mu(x)}$ and verify directly that
\begin{align*}
\partial_j w_i(x)=\delta_{ij}+O(\gamma |x|).
\end{align*}

Differentiating $H(r)$ with respect to $r$, we have
\begin{align}
H'(r)=\frac{d-1}{r} \int_{\partial B_r\cap \Omega} \mu u^2 \,d\sigma+2  \int_{\partial B_r\cap \Omega} \mu u \frac{\partial u}{\partial r} \,d\sigma+\int_{\partial B_r\cap \Omega}  \frac{\partial \mu}{\partial r} u^2\, \,d\sigma,
\label{DHH}
\end{align}
where $\frac{\partial}{\partial r} = \frac{x}{|x|}\cdot \nabla$.
The conormal derivative associated with $\cL = - \nabla\cdot (A\nabla)$ is defined as $\frac{\partial u}{\partial \nu}= n\cdot A\nabla u$, where $n$ is the outer normal of the boundary.  Since $\cL(u) = 0$ in $B_r\cap \Omega$ and $u=0$ on $B_r\cap \partial\Omega$, an integration by parts gives
\begin{align}
\int_{B_r\cap \Omega} A\nabla u\cdot \nabla u=\int_{\partial B_r\cap \Omega}u \frac{\partial u}{\partial \nu} \, d\sigma.
\label{NoticeD}
\end{align}
Note that on $\partial B_r \cap \Omega$, $n(x) = \frac{x}{|x|}$.
Let
\begin{align*}
\tau=A\frac{x}{|x|}-\mu \frac{x}{|x|}.
\end{align*}
Observe that $\tau$ is a tangent vector field on $\partial B_r$ since $\tau\cdot n = 0$ on $\partial B_r$. Moreover, using $n=\frac{x}{|x|}$ on $\partial B_r\cap\Omega$, we can rewrite $\tau = (\tau_i)$ as
\begin{align*}
\tau_i =a_{ij}n_j-a_{ml}n_m n_l n_i =a_{ij}n_jn_m n_m-a_{ml}n_m n_l n_i.
\end{align*}
Thus, we have
\begin{align}
\begin{aligned}
    \tau\cdot \nabla&=a_{ij}n_jn_m n_m \partial_i- a_{ml}n_m n_l n_i \partial_i \\
&=a_{ij}n_jn_m n_m \partial_i- a_{ij}n_i n_j n_m \partial_m \\
&=a_{ij}n_jn_m  (n_m \partial_i- n_i\partial_m),
\end{aligned}
\end{align}
where we have interchanged the index $m$ and $i$, and $l$ and $j$ in the second equality. The key is that $(n_m \partial_i- n_i\partial_m)$ is a tangential derivative on $\partial B_r\cap \Omega$ that allows for integration by parts on the boundary. Since $u=0$ on $B_r\cap \partial\Omega$, integrating by parts shows that
\begin{align}
\begin{aligned}
    \int_{\partial B_r\cap \Omega} \nabla  (u^2) \cdot \tau \, d\sigma &=\int_{\partial B_r\cap \Omega} a_{ij}n_jn_m  (n_m \partial_i- n_i\partial_m) u^2 d\sigma  \\
&=-\int_{\partial B_r\cap \Omega} u^2  (n_m \partial_i- n_i\partial_m) (a_{ij}n_jn_m )  d\sigma  \\
& = \int_{\partial B_r\cap \Omega} u^2  (n_m \partial_i- n_i\partial_m) (n_in_m )  d\sigma \\
& \qquad - \int_{\partial B_r\cap \Omega} u^2  (n_m \partial_i- n_i\partial_m) ((a_{ij} - \delta_{ij})n_jn_m )  d\sigma \\
&=O(\gamma) H(r),
\end{aligned}
\label{newinte}
\end{align}
where we have used $A(0)=I$, and the Lipschtiz continuity of $A$ and $n$ on the smooth boundary $\partial B_r \cap \Omega$, as well as the observation $(n_m \partial_i- n_i\partial_m) (n_in_m ) = 0$.
Hence,  from (\ref{newinte}), we have
\begin{align*}
D(r)=\int_{\partial B_r\cap \Omega} u\frac{\partial u}{\partial \nu}\, d\sigma=&\int_{\partial B_r\cap \Omega} \mu u\frac{\partial u}{\partial r}\, d\sigma+\int_{\partial B_r\cap \Omega}  u\nabla u\cdot \tau \, d\sigma \nonumber \\
=&\int_{\partial B_r\cap \Omega} \mu u\frac{\partial u}{\partial r}\, d\sigma+\frac{1}{2}\int_{\partial B_r\cap \Omega}  \nabla (u^2) \cdot \tau\, d\sigma \nonumber \\
=&\int_{\partial B_r\cap \Omega} \mu u\frac{\partial u}{\partial r}\, d\sigma+O(\gamma) H(r).
\end{align*}
It follows from (\ref{DHH}) that
\begin{align}
H'(r)=\frac{d-1}{r} H(r)+2D(r)+O(\gamma)H(r).
\label{HHHD}
\end{align}

Next we consider $D'(r)$.
We use the following Rellich-Necas identity
\begin{align}
\begin{aligned}
    & \nabla\cdot (w (A\nabla u\cdot \nabla u)) \\
    &=2 \nabla\cdot ( (w \cdot \nabla u) A \nabla u)+ (\nabla\cdot w) A\nabla u\cdot \nabla u 
    \\ & \qquad - 2 \partial_i w_k a_{ij}\partial_j u\partial_k u  
-2(w \cdot \nabla u)  \nabla\cdot (A\nabla u)+w_k \partial_k a_{ij} \partial_i u \partial_j u.
\end{aligned}
\label{rene}
\end{align}
Direct calculations show that
\begin{align}
 w\cdot \frac{x}{|x|}=r \quad \mbox{and} \quad (w\cdot \nabla u) A\nabla u\cdot\frac{x}{|x|}=\frac{r}{\mu(x)}(\frac{\partial u}{\partial \nu})^2 \quad \mbox{for} \ x\in \partial B_r\cap \Omega.
\end{align}
On $B_r\cap\partial\Omega$, since $n=\frac{\nabla u}{|\nabla u|}$ (if $\nabla u \neq 0$), we have
\begin{align}
(w \cdot n) A\nabla u\cdot \nabla u = (w \cdot \nabla u) A\nabla u\cdot n = \frac{(Ax\cdot n) (An\cdot n)}{\mu(x)}|\nabla u|^2.
\end{align}
This identity definitely holds if $\nabla u = 0$.
We integrate the Rellich-Necas identity (\ref{rene}) to have
\begin{align}
\begin{aligned}
    & \int_{\partial B_r\cap \Omega} A\nabla u\cdot \nabla u\\
    &= \frac{1}{r} \int_{B_r\cap\partial \Omega  }\frac{(Ax\cdot n) (A n \cdot n)}{\eta_1(x)} (\frac{\partial u}{\partial n})^2 \, d\sigma+2\int_{\partial B_r\cap \Omega} \frac{1}{\mu} (\frac{\partial u}{\partial \nu})^2 \\
& \qquad +\frac{d-2}{r} \int_{B_r\cap \Omega  } A\nabla u\cdot \nabla u
+O(\gamma)\int_{B_r\cap \Omega  } A\nabla u\cdot \nabla u,
\end{aligned}
\end{align}
where we have used the fact that $\nabla\cdot (A\nabla u)=0$ in $B_r\cap \Omega$, $|\nabla u| = |n\cdot \nabla u| = |\frac{\partial u}{ \partial n}|$ almost everywhere on $B_r\cap \partial \Omega$, and the Lipschitz continuity of $A$.
Note that
\begin{align}
D'(r)=\int_{\partial B_r\cap \Omega} A\nabla u\cdot \nabla u.
\label{DDD}
\end{align}
Using the assumption that $Ax\cdot n\geq 0$ on $B_r\cap \partial \Omega$ and (\ref{DDD}), on one hand, we have
\begin{align}
\frac{D'(r)}{D(r)}\geq \frac{d-2}{r}+\frac{2\int_{\partial B_r\cap \Omega} \mu^{-1}(\frac{\partial u}{\partial \nu})^2 \, d\sigma}{\int_{B_r\cap\Omega} A\nabla u\cdot \nabla u }+O(\gamma).
\label{FDD}
\end{align}
On the other hand, with the aid of (\ref{HHHD}), we obtain that
\begin{align}
\frac{H'(r)}{H(r)}=\frac{d-1}{r}+ \frac{2\int_{B_r\cap \Omega} A\nabla u\cdot \nabla u \, d\sigma}{\int_{\partial B_r\cap\Omega} \mu(x) u^2 \, d\sigma }+O(\gamma).
\label{FHH}
\end{align}

Now we can establish the monotonicity property of the frequency function. Thanks to (\ref{FDD}), (\ref{FHH}), (\ref{NoticeD}) and the definition of $\mathcal{N}_u(0, r)$ in (\ref{def.Nu0r}), we get
\begin{align}
\begin{aligned}
    \frac{\mathcal{N}_u'(0, r)}{\mathcal{N}_u(0, r)}&=\frac{1}{r}+\frac{D'(r)}{D(r)}-\frac{H'(r)}{H(r)} \\
&\geq \frac{2\int_{\partial B_r\cap \Omega} \mu^{-1}(\frac{\partial u}{\partial \nu})^2 \, d\sigma}{\int_{\partial B_r\cap\Omega}u \frac{\partial u}{\partial \nu}   d\sigma }- \frac{2 \int_{\partial B_r\cap\Omega}u   \frac{\partial u}{\partial \nu} d\sigma }{\int_{\partial B_r\cap\Omega} \mu u^2 \, d\sigma } +O(\gamma) \\
&\geq -C\gamma ,
\end{aligned}
\end{align}
where $\mathcal{N}_u'(0,r) = \frac{d}{dr} \mathcal{N}_u (0,r)$ and we have used the Cauchy-Schwarz inequality in the last inequality. Thus, $e^{C\gamma r}\mathcal{N}(0,r)$ is nondecreasing with respect to $r$.

\subsection{Some useful results for elliptic equations}

\begin{lemma}\label{lem.Du-Convex}
    Let $\Omega$ be a convex domain and $0\in \partial \Omega$. Suppose that $u$ is $A$-harmonic in $B_{1,+} = B_1(0)\cap \Omega$ with $u = 0$ on $\partial \Omega \cap B_1(0)$. Then
\begin{equation*}
    \norm{\nabla u}_{L^\infty(B_{1/2,+})} \le C\norm{u}_{L^2(B_{1,+})}.
\end{equation*}
\end{lemma}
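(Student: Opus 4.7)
The plan is to reduce the gradient estimate to the pointwise decay
\begin{equation*}
    |u(x)|\le C\|u\|_{L^2(B_{1,+})}\,d(x),\qquad x\in B_{1/2,+},
\end{equation*}
where $d(x)=\mathrm{dist}(x,\partial\Omega)$, and then to invoke the interior gradient estimate on the ball $B_{d(x)/2}(x)\subset\Omega$, which gives
\begin{equation*}
    |\nabla u(x)|\le \frac{C}{d(x)}\|u\|_{L^\infty(B_{d(x)/2}(x))}\le C\|u\|_{L^2(B_{1,+})}.
\end{equation*}
A preliminary De~Giorgi--Nash step yields $\|u\|_{L^\infty(B_{3/4,+})}\le C\|u\|_{L^2(B_{1,+})}=:M_0$, which already handles points with $d(x)$ comparable to $1$ through the interior estimate alone.

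For the pointwise decay, fix $x\in B_{1/2,+}$ and let $x_0\in\partial\Omega$ be its closest boundary point. The interior ball $B_{d(x)}(x)\subset\Omega$ is tangent to $\partial\Omega$ at $x_0$, so by convexity of $\Omega$ there is a support plane $P$ at $x_0$ whose inward unit normal $n$ equals $(x-x_0)/d(x)$, and $\Omega\subset H:=\{y:(y-x_0)\cdot n\ge 0\}$. Fix a radius $\rho=\rho(\Lambda,\gamma,L)>0$ (small; its size can be controlled by a rescaling as in Fact~2 of Section~3) and let $v$ be the weak $\mathcal{L}$-solution on $H\cap B_\rho(x_0)$ with $v=M_0$ on $\partial B_\rho(x_0)\cap H$ and $v=0$ on $\partial H\cap B_\rho(x_0)$. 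Since $\Omega\cap B_\rho(x_0)\subset H\cap B_\rho(x_0)$, together with $u=0\le v$ on $\partial\Omega\cap B_\rho$ and $|u|\le M_0=v$ on $\partial B_\rho\cap\Omega$, the weak maximum principle applied to $\pm u$ against $v$ gives $|u|\le v$ on $\Omega\cap B_\rho(x_0)$. Because $\partial H$ is flat, boundary Schauder regularity for divergence-form operators with Lipschitz coefficient on a half-space yields $v(y)\le C M_0\,d_P(y)/\rho$ for $y\in B_{\rho/2}(x_0)\cap H$, where $d_P(y)=(y-x_0)\cdot n$. Evaluating at $y=x$ (using $d_P(x)=d(x)$) completes the decay.

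The main technical point is the linear bound $v\lesssim M_0 d_P/\rho$ near the flat side $\partial H\cap B_{\rho/2}(x_0)$. This is classical boundary regularity for $\mathcal{L}$ on the half-space: the boundary data vanishes on the flat side, so $v\in C^{1,\alpha}$ up to $\partial H\cap B_{\rho/2}(x_0)$ with $\|\nabla v\|_{L^\infty}\le CM_0/\rho$, and evaluating at points close to $\partial H$ gives the bound. The only subtlety is the Lipschitz corner $\partial H\cap\partial B_\rho$, where the data jumps from $0$ to $M_0$; however, this corner is far from $x$, which sits on the inward normal from $x_0$ and hence well inside $B_{\rho/2}(x_0)$, so it has no effect on the estimate. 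Alternatively, one can bypass Schauder theory altogether with an explicit Hopf-type barrier tailored to the Lipschitz coefficient, after first rescaling to make the local Lipschitz seminorm of $A$ arbitrarily small.
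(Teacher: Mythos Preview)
Your argument is correct and follows essentially the same route as the paper: reduce to the linear decay $|u(x)|\le C\,d(x)\,\|u\|_{L^2(B_{1,+})}$ via a half-space barrier over the support plane at the nearest boundary point, invoke boundary $C^{1,\alpha}$ regularity on the flat side to control the barrier, and finish with the interior gradient estimate. The paper carries out the barrier construction at the origin first and then remarks that the same argument applies at every boundary point; your presentation simply localizes at $x_0$ from the outset, which is equivalent.
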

\begin{proof}
    Without loss of generality, assume that $x_d = 0$ is the support plane of $\Omega$ at $0$. Let $D_{3/4,+}: = B_1(0) \cap \{ x_d > 0\}$. By the De Giorgi-Nash estimate,
    \begin{equation}\label{est.K-infty-2}
        K: = \norm{u}_{L^\infty(B_{3/4,+})} \le C\norm{u}_{L^2(B_{1,+})}.
    \end{equation}
    Consider a positive barrier function $v$ which is $A$-harmonic in $D_{3/4,+}$ and satisfies $v = K$ on $\partial B_{3/4} \cap \{ x_d > 0\}$ and $v = 0$ on $B_{3/4} \cap \{ x_d = 0\}$. We show that $v(x) \le C Kx_d $ for all $x\in D_{5/8,+} = B_{5/8}\cap \{ x_d > 0\}$. This follows easily from the gradient estimate of $v$ over the flat boundary, i.e.,
    \begin{equation*}
        \norm{\nabla v}_{L^\infty( D_{5/8,+})} \le C\norm{v}_{L^\infty(D_{3/4,+})} \le CK.
    \end{equation*}
    Here we need $A$ to be at least H\"{o}lder continuous.

    Now, observe that $-v\le u \le v$ in $B_{3/4,+}$, due to the fact $|u|\le v$ on $\partial B_{3/4,+}$ and the comparison principle. Then we have
    \begin{equation*}
        |u(x)| \le CK x_d, \quad \text{for all } x\in B_{5/8,+}.
    \end{equation*}
    In particular, for any $x = (x', x_d)$ with $x' = 0$, we have $|u(x)| \le CK \dist(x, \partial \Omega)$.
    Note that this argument actually works for any points in $B_{1/2,+}$ as any interior point can be connected to the nearest point on the boundary with a perpendicular support plane. Therefore, we obtain
    \begin{equation*}
        |u(x)| \le CK \dist (x, \partial \Omega),\quad \text{for all } x\in B_{5/8,+}.
    \end{equation*}
    Finally, for any $x\in B_{1/2,+}$, let $\delta(x) = \dist(x,\partial\Omega) \le 1/8$ (the case $\delta(x) > 1/8$ is trivial). By the interior gradient estimate
    \begin{equation*}
        |\nabla u(x)| \le \frac{C}{\delta(x)} \norm{u}_{L^\infty(B_{\delta(x)}(x))} \le \frac{C}{\delta(x)} CK \delta(x) \le C K.
    \end{equation*}
    This, combined with \eqref{est.K-infty-2}, gives the desired estimate.
\end{proof}

\begin{lemma}\label{lem.Convex.Lip}
    Let $\beta > 0$ and $\mathcal{C} = \{ x = (x',x_d): 0\le |x'|< \beta x_d \text{ and } x_d < 1 \}$. Let $u$ be a positive $A$-harmonic function in $\mathcal{C}$. Then there exists $\alpha = \alpha(\beta,\Lambda) >0$ such that for any $0<t<1/2$
    \begin{equation*}
        u(0,t) \ge t^\alpha u(0,1/2).
    \end{equation*}
\end{lemma}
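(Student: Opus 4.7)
The plan is a textbook Harnack chain along the axis of the cone. First I would verify the geometry: the perpendicular distance from $(0,s)$ to the lateral boundary $\{|x'|=\beta x_d\}$ equals $\mu s$ with $\mu := \beta/\sqrt{1+\beta^2}\in(0,1)$, and for $s\in(0,1/2]$ we have $s(1+\mu/2)<1$, so the ball $B_{\mu s/2}(0,s)$ lies inside $\mathcal{C}$. Consequently the standard interior Harnack inequality for positive $A$-harmonic functions yields a constant $C_H=C_H(\Lambda,d)$ with
\begin{equation*}
    \sup_{B_{\mu s/4}(0,s)} u \;\leq\; C_H \inf_{B_{\mu s/4}(0,s)} u
\end{equation*}
for every $s\in(0,1/2]$.

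Next I would fix the contraction ratio $\rho := 1-\mu/8\in(0,1)$ and define the geometric sequence $s_k := \rho^k/2$. Since $s_k - s_{k+1} = (\mu/8)s_k < \mu s_k/4$, both $(0,s_k)$ and $(0,s_{k+1})$ lie in $B_{\mu s_k/4}(0,s_k)$, so iterating the Harnack inequality along the chain gives
\begin{equation*}
    u(0,s_k) \;\geq\; C_H^{-k}\,u(0,1/2) \qquad \text{for every }k\geq 0.
\end{equation*}

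Given $t\in(0,1/2)$, let $k\geq 1$ be the integer with $s_k\leq t<s_{k-1}$; because $s_{k-1}-t<\mu s_{k-1}/4$, one further Harnack step at scale $s_{k-1}$ connects $(0,t)$ to $(0,s_{k-1})$, yielding $u(0,t)\geq C_H^{-(k+1)}u(0,1/2)$. The bound $k+1\leq \log(1/(2t))/\log(1/\rho)+2$ then gives
\begin{equation*}
    u(0,t) \;\geq\; C_H^{-2}\,(2t)^{\alpha_0}\,u(0,1/2), \qquad \alpha_0 := \log C_H/\log(1/\rho).
\end{equation*}
Since $t<1/2$, the remaining multiplicative constant $C_H^{-2}2^{\alpha_0}$ can be absorbed into a larger power of $t$: any $\alpha\geq \max\{\alpha_0,\; 2\log_2 C_H\}$ satisfies $C_H^{-2}2^{\alpha_0}t^{\alpha_0}\geq t^{\alpha}$ on $(0,1/2)$. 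Since $\rho$ depends only on $\beta$ and $C_H$ only on $\Lambda$ (with $d$ fixed), this gives $\alpha=\alpha(\beta,\Lambda)$ as claimed.

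There is no genuine obstacle; the whole argument is a routine Harnack chain. The only points requiring a bit of care are the geometric verification that $B_{\mu s/2}(0,s)$ remains inside the slanted cone uniformly in $s\in(0,1/2]$ (ensured by the explicit value $\mu=\beta/\sqrt{1+\beta^2}<1$ and the restriction $s\leq 1/2$ keeping the chain away from the top cap $x_d=1$), and the final bookkeeping that absorbs the multiplicative constant into the exponent, which works because $t$ is bounded away from $1$.
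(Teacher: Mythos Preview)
Your proof is correct and follows essentially the same Harnack-chain argument along the cone's axis as the paper's proof. The only cosmetic difference is that the paper uses a dyadic sequence of scales $2^{-k}$ (with a fixed number $N(\beta)$ of Harnack steps per dyadic interval), whereas you use a finer geometric sequence with ratio $\rho=1-\mu/8$ so that consecutive points lie in a single Harnack ball; the resulting exponent $\alpha(\beta,\Lambda)$ is obtained in the same way.
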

\begin{proof}
    Let $t\in (0,1/2)$ and assume $u(0,1/2) = 1$ (without loss of generality). Note that $(0,t)$ is an interior point of $\mathcal{C}$ with $\dist((0,t), \partial \mathcal{C}) \ge \frac{\beta t}{1+\beta}$. Then for $t\in [1/4,1/2)$, $ \dist((0,t), \partial \mathcal{C}) \ge \frac{\beta}{4(1+\beta)}$. By a Harnack chain of balls with radius $\frac{\beta}{8(1+\beta)}$, one can connect $(0,1/2)$ to any point $(0,t)$ for $1/4\le t< 1/2$. The number of such balls is at most $N = N(\beta) \ge 1$. Then the Harnack inequality implies
    \begin{equation*}
        u(0,t) \ge c^N u(0,1/2) = c(\beta,\Lambda)>0, \quad \text{for all } 1/4\le t< 1/2.
    \end{equation*}
    Note that $c(\beta,\Lambda)<1$.
    Repeating this argument, we can obtain that, for $k\ge 1$,
    \begin{equation*}
        u(0,t) \ge c^k(\beta,\Lambda), \quad \text{for all } 2^{-k}\le t< 2^{-k+1}.
    \end{equation*}
    It follows that for all $t\in (0,1/2)$,
    \begin{equation*}
        u(0,t) \ge 2^{(\log_2 c(\beta,\Lambda)) (-2\log_2 t)} = t^{-2\log_2(c(\beta,\Lambda))}.
    \end{equation*}
    This gives the desired estimate with $\alpha = -2\log_2(c(\beta,\Lambda)) > 0$.
\end{proof}

\begin{lemma}\label{lem.cone}
    Let $\Omega$ be a Lipschitz domain and $0\in \partial \Omega$. Assume $\partial \Omega \cap B_1(0)$ is given by the Lipschitz graph $x_d = \phi(x')$ and $B_{1,+}=\Omega \cap B_1(0) = B_1(0) \cap \{ x = (x,x_d): x_d > \phi(x') \}$. Then, there exists $\alpha, c > 0$ (depending only on $\Lambda$ and the Lipschitz constnat of $\phi$) such that for any positive $A$-harmonic function $u$ in $B_1^+$, we have
    \begin{equation*}
        u(x) \ge c\delta(x)^\alpha u(0,1/2), \quad \text{for all } x\in B_{1/2,+}.
    \end{equation*}
\end{lemma}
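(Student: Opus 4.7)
The plan is to combine the cone-decay estimate from Lemma \ref{lem.Convex.Lip} with a standard Harnack chain. Fix $x \in B_{1/2,+}$, let $y := (x', \phi(x')) \in \partial\Omega$ be the vertical projection of $x$ onto the graph, and write $d_\phi(x) := x_d - \phi(x')$. A direct calculation from the $L$-Lipschitz assumption on $\phi$ yields $\delta(x) \le d_\phi(x) \le \sqrt{1+L^2}\,\delta(x)$, so $d_\phi(x)$ and $\delta(x)$ are comparable with constants depending only on $L$. Furthermore, whenever $(v',v_d)$ satisfies $v_d > L|v'|$ we have $\phi(x' + v') \le \phi(x') + L|v'| < \phi(x') + v_d$, which shows that the truncated inward cone
\[
\mathcal{C}_y := \{y + (v',v_d) : v_d > L|v'|,\ |v| < c_0\}
\]
is contained in $B_{1,+}$ for a sufficiently small $c_0 = c_0(L) > 0$. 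Note that $x = y + d_\phi(x) e_d \in \mathcal{C}_y$ whenever $d_\phi(x) < c_0$.

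Next, I rescale by setting $\tilde u(z) := u(y + c_0 z)$, which is a positive $\tilde A$-harmonic function in the model cone $\{|z'| < z_d/L,\ z_d < 1\}$ (with $\tilde A(z) := A(y + c_0 z)$ preserving the ellipticity constant $\Lambda$). Lemma \ref{lem.Convex.Lip} with $\beta = 1/L$ then gives some $\alpha = \alpha(L, \Lambda) > 0$ such that $\tilde u(0, t) \ge t^\alpha \tilde u(0, 1/2)$ for $t \in (0, 1/2)$. Choosing $t = d_\phi(x)/c_0$ (valid when $d_\phi(x) < c_0/2$; the complementary case is handled directly by the interior Harnack inequality) and undoing the change of variables, I obtain
\[
u(x) \ge c_1\, \delta(x)^\alpha\, u(z_y), \qquad z_y := y + \tfrac{c_0}{2} e_d,
\]
with $c_1$ depending only on $L$ and $\Lambda$.

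Finally, I connect $u(z_y)$ to $u(0, 1/2)$ by a Harnack chain inside $B_{1,+}$. Since $\phi(0) = 0$ and $\phi$ is $L$-Lipschitz, both $z_y$ and $(0, 1/2)$ lie at distance at least $c_2(L) > 0$ from $\partial\Omega$, while their Euclidean separation is bounded by a constant depending only on $L$. A chain of balls of fixed size whose number depends only on $L$ therefore links them, yielding $u(z_y) \ge c_3\, u(0, 1/2)$, and the two inequalities combine to give the claim. The main technical nuisance is the uniform geometric bookkeeping — verifying that the cone $\mathcal{C}_y$ sits inside $B_{1,+}$ for every $x \in B_{1/2,+}$ and that the Harnack chain avoids the boundary with controlled length — but both follow from the fixed Lipschitz character of $\phi$ with constants depending only on $L$ and $\Lambda$.
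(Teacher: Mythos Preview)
Your argument is correct and follows essentially the same route as the paper: drop an inward cone at the vertical projection $y=(x',\phi(x'))$, apply Lemma~\ref{lem.Convex.Lip} along its axis to get the $\delta(x)^\alpha$ decay, and use a Harnack chain (with length controlled by $L$) to link the cone tip to $(0,1/2)$. One cosmetic slip: your truncation $|v|<c_0$ in $\mathcal{C}_y$ is slightly \emph{smaller} than the rescaled model cone $\{v_d<c_0,\ |v'|<v_d/L\}$ you actually need for $\tilde u$, so either replace $|v|<c_0$ by $v_d<c_0$ (the inclusion in $B_{1,+}$ still holds after shrinking $c_0$ by the factor $\sqrt{1+L^{-2}}$) or rescale by a slightly smaller factor.
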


\begin{proof}
    Assume $u(0,1/2) = 1$. First of all, in view of the Harnack inequality, we have $u(x) \ge c>0$ for all $x\in B_{7/8,+} \cap \{x = (x',x_d): x_d > \phi(x') + 1/8 \}$. Thus, it suffices to consider an arbitrary point $x\in B_{1/2,+} \cap \{ x = (x',x_d): \phi(x')< x_d < \phi(x') + 1/8 \}$. Fix such an $x = (x',x_d)$ and let $x_0 = (x', \phi(x')) \in \partial \Omega$. Let
    \begin{equation*}
        \mathcal{C}_{1/4}(x_0) = \{ y=(y',y_d): 0\le |y'-x'| < \beta (y_d - \phi(x')) \text{ and } y_d-\phi(x') < 1/4 \},
    \end{equation*}
    where $\beta = \min \{1, L^{-1}\}$ and $L$ is the Lipschitz constant of $\phi$. Note that $\mathcal{C}_{1/4}(x_0)$ is a rescaled and translated version of the cone in Lemma \ref{lem.Convex.Lip} and is entirely contained in $B_{1,+}$. Also note that $x$ in on the vertical axis of the cone $\mathcal{C}_{1/4}(x_0)$.

    Let $x_1 = (x', \phi(x') + 1/8)$. Using $u(x_1) \ge c$ and applying Lemma \ref{lem.Convex.Lip} in $\mathcal{C}_{1/4}(x_0)$, we obtain $u(x) \ge c (x_d - \phi(x'))^\alpha = c|x-x_0|^\alpha \ge c\delta(x)^\alpha$, where the last inequality follows from the fact that $x$ is on the vertical axis of  $\mathcal{C}_{1/4}(x_0)$ whose distance from the boundary is comparable to $|x-x_0|$. This completes the proof.
\end{proof}

The following lemma is a generalization of \cite[Lemma 9]{LMNN21} from harmonic functions (proved by using reflection) to $A$-harmonic functions. We point out that the assumptions $\frac12 I \le A \le 2I$ and $|\nabla A|\le \gamma_0$ below are purely for technical convenience.

\begin{lemma}\label{lem.ToyLem}
    There exists $\gamma_0 = \gamma_0(d)>0$ such that the following statement holds. If $\frac12 I \le A \le 2I, |\nabla A| \le \gamma_0$ and $u$ is a continuous $A$-harmonic function in $B_{2,+}:= B_2 \cap \{ x_d >0 \}$ vanishing on $B_2\cap \{x_d = 0\}$ and satisfying $N_u(0,1/2) \le N$. Then there exists $w\in B_{1/2} \cap \{ x_d = 0 \}$ and $t >0, C>0$ depending only on $N$ and $A$ such that $u$ does not change sign in $B_{t,+}(w)$. Moreover, for all $x\in B_{t,+}(w)$,
    \begin{equation*}
        |u(x)| \ge C x_d J_u(0,1)^{1/2}.
    \end{equation*}
\end{lemma}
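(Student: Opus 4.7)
The plan is to reduce the claim to a quantitative lower bound on the boundary normal derivative $g(x'):=\partial_d u(x',0)$, and then to use H\"older continuity of $\nabla u$ up to the flat boundary to propagate this lower bound into a half-ball where $u$ has a definite sign. First I would normalize so that $J_u(0,1)=1$, whence $N_u(0,1/2)\le N$ gives $J_u(0,1/2)\ge e^{-N}$; with $\tfrac12 I\le A\le 2I$ this translates (up to absolute constants) into $\norm{u}_{L^2(B_{1/2,+})}^2\ge c e^{-N}$, and the standard local $L^\infty$--$L^2$ comparison yields $\norm{u}_{L^\infty(B_{1/4,+})}\ge c_1 e^{-CN}$. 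Since the boundary is the flat plane $\{x_d=0\}$, $u$ vanishes on it, and $A$ is Lipschitz with $\gamma_0$ small, classical Schauder-type boundary regularity gives $\norm{u}_{C^{1,\alpha}(\overline{B_{3/4,+}})}\le C_2$ for some $\alpha=\alpha(A)\in(0,1)$; in particular the tangential derivatives of $u$ vanish on $\{x_d=0\}$, and $g\in C^{\alpha}(B_{3/4}^{d-1})$ with a bound depending only on $A$.

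The second step is to apply Lemma~\ref{lem.Cauchy} to force $\norm{g}_{L^\infty(B_{1/2}^{d-1})}$ to be bounded below. On $\{x_d=0\}$ the tangential derivatives of $u$ vanish, so $n\cdot A\nabla u=-a_{dd}(\cdot,0)\,g$ and ellipticity gives $|n\cdot A\nabla u|\le 2\norm{g}_{L^\infty}$. Setting $\varepsilon:=\norm{g}_{L^\infty(B_{1/2}^{d-1})}$ and applying Lemma~\ref{lem.Cauchy} to a rescaled copy of $u$ whose $L^2$-norm on $B_{1,+}$ is normalized to $1$ will yield $\norm{u}_{L^\infty(B_{1/4,+})}\le C_3\varepsilon^{\tau}$; comparison with the lower bound from the first paragraph then forces $\varepsilon\ge c_N:=c\,e^{-CN/\tau}>0$. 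I would then pick $w'\in \overline{B_{1/2}^{d-1}}$ with $|g(w')|\ge c_N$; by shrinking slightly I may assume $w'\in B_{1/2}^{d-1}$, and WLOG $g(w')>0$.

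Finally, the H\"older bounds localize everything. From $\norm{g}_{C^\alpha}\le C_2$ one gets $g\ge c_N/2$ on $\{|x'-w'|\le t_0\}$ for $t_0\asymp c_N^{1/\alpha}$. From $\norm{\partial_d u}_{C^\alpha(\overline{B_{3/4,+}})}\le C_2$ and $\partial_d u(x',0)=g(x')$ one gets $|\partial_d u(x',x_d)-g(x')|\le C_2 x_d^\alpha$, so $\partial_d u\ge c_N/4$ on the cylinder $\{|x'-w'|\le t_0,\ 0\le x_d\le t_1\}$ with $t_1\asymp c_N^{1/\alpha}$. Setting $t=\tfrac12\min(t_0,t_1)$ and $w=(w',0)$, the fundamental theorem of calculus in the $x_d$-direction gives $u(x',x_d)=\int_0^{x_d}\partial_d u(x',s)\,ds\ge (c_N/4)x_d$ on $B_{t,+}(w)$; undoing the initial normalization restores the factor $\sqrt{J_u(0,1)}$. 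The main obstacle will be the Cauchy-uniqueness step: I must track carefully how the normalization $J_u(0,1)=1$ converts into the precise $L^2$-normalization required by Lemma~\ref{lem.Cauchy} and verify that the resulting $c_N$ really depends only on $N$ and $A$; together with the need for uniform Schauder constants in the boundary $C^{1,\alpha}$ estimate, this is precisely why the smallness of $\gamma_0$ must be built into the hypotheses.
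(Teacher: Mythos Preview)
Your approach coincides with the paper's: normalize $J_u(0,1)=1$, use the doubling bound together with Lemma~\ref{lem.Cauchy} to force a quantitative lower bound on $\sup_{B_{1/2}\cap\{x_d=0\}}|\partial_d u|$, pick a boundary point $w$ realizing it, use boundary $C^{1,\alpha}$ regularity to keep $\partial_d u$ bounded below on a small half-ball, and integrate in $x_d$. One minor correction: the step ``the standard local $L^\infty$--$L^2$ comparison yields $\norm{u}_{L^\infty(B_{1/4,+})}\ge c_1e^{-CN}$'' does not follow from $\norm{u}_{L^2(B_{1/2,+})}^2\ge ce^{-N}$ alone (the De~Giorgi--Nash comparison goes the other way, and the $L^2$ mass could sit in the annulus $B_{1/2,+}\setminus B_{1/4,+}$); the paper closes this by invoking the almost monotonicity of $N_u$ once or twice more to reach $J_u(0,1/8)\ge Ce^{-5N}$, which directly gives the needed lower bound on $\norm{u}_{L^2(B_{1/4,+})}$ to feed into Lemma~\ref{lem.Cauchy}.
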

\begin{proof}
    By normalization, assume $J_u(0,1) = 1$. If $|\nabla A|$ is smaller than some absolute constant $\gamma_0 = \gamma_0(d)>0$, Lemma \ref{lem.Nu.A=I} implies the almost monotonicity of $N_u(0,r)$. In particular, we have $N_u(0,2^{-k}) \le 2N_u(0,1/2) \le 2N$ for $k = 2,3$. Consequently,
    \begin{equation*}
    \begin{aligned}
        \norm{u}_{L^2(B_{1/4,+})}^2 \gtrsim J_u(0,\frac18) & = J_u(0,1) \exp\big(- \sum_{k=1}^3 N_u(0,2^{-k}) \big) \\
        & \ge Ce^{-5N}.
    \end{aligned}       
    \end{equation*}
    On the other hand, by Lemma \ref{lem.Cauchy}, we have
    \begin{equation*}
    \begin{aligned}
        \norm{u}_{L^2(B_{1/4,+})} & \le C \Big( \sup_{B_{1/2}\cap \{x_d = 0 \} } |n\cdot A \nabla u| \Big)^\tau  \norm{u}_{L^2(B_{1/2,+})}^{1-\tau} \\
        & \le C \sup_{B_{1/2}\cap \{x_d = 0 \} } |\nabla u|^\tau.
    \end{aligned}
    \end{equation*}
    Combining the above two estimates, we know that there exists $w\in B_{1/2}\cap \{x_d = 0 \}$ such that $|\nabla u(w)| = |\partial_d u(w)| \ge Ce^{-5N/2\tau}$. Without loss of generality, assume $\partial_d u(w) > 0$. Now, using the fact that $u\in C^{1,\alpha}$ in $B_{3/4,+}$, we see that there exists $t>0$ (depending on $N$ and $A$) such that $\partial_d u(x) \ge \frac12 C e^{-5N/2\tau}$ for all $x\in B_{t,+}(w)$. This implies that $u$ is positive in $B_{t,+}(w)$ and satisfies the desired estimate by the fundamental theorem of calculus.
\end{proof}

\begin{lemma}\label{lem.comparison.flat}
    Assume $\Lambda^{-1}I\le A \le \Lambda I$. There exists $\gamma_0 = \gamma_0(d, \Lambda)>0$ such that the following statement holds. If $|\nabla A| \le \gamma_0$ and $u$ is a continuous $A$-harmonic function in $B_{1,+}:= B_1(0) \cap \{ x_d >0 \}$ satisfying $|u(x)| \le x_d$ for all $x\in \partial B_{1,+}$,
    then $|u(x)| \le 2x_d$ for all $x\in B_{1,+}$.
\end{lemma}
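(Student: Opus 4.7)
The plan is a barrier argument combined with the weak maximum principle for $\mathcal{L} = -\nabla\cdot A\nabla$. By symmetry between $\pm u$, it suffices to prove $u \le 2x_d$ in $B_{1,+}$; the bound $-u \le 2x_d$ then follows by applying the same argument to $-u$, yielding $|u|\le 2x_d$. To prove $u\le 2x_d$, I would construct an $A$-supersolution $v$ on $B_{1,+}$ that satisfies $v\ge x_d$ on $\partial B_{1,+}$ and $v\le 2x_d$ inside, then invoke the comparison principle: since $\mathcal{L}(u-v) = -\mathcal{L}(v)\le 0$ and $u-v\le 0$ on $\partial B_{1,+}$, the weak maximum principle gives $u\le v\le 2x_d$ throughout $B_{1,+}$.

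My candidate is $v(x) := x_d + C\gamma_0\, x_d(1-x_d)$, with $C=C(d,\Lambda)$ chosen below. Since $x_d \in [0,1]$ on $\overline{B_{1,+}}$, the correction term is nonnegative and bounded above by $x_d$, so $x_d \le v \le (1+C\gamma_0)\, x_d$. This already yields $v \ge x_d \ge |u|$ on $\partial B_{1,+}$ and the desired upper bound $v \le 2x_d$ once $C\gamma_0 \le 1$. For the supersolution inequality, I would compute $\mathcal{L}(x_d) = -\sum_i \partial_i A_{id}$, whence $|\mathcal{L}(x_d)| \le d\gamma_0$, and
\begin{equation*}
    \mathcal{L}(x_d - x_d^2) \;=\; 2A_{dd} - (1-2x_d)\sum_i \partial_i A_{id} \;\ge\; 2\Lambda^{-1} - d\gamma_0,
\end{equation*}
using the ellipticity bound $A_{dd}\ge \Lambda^{-1}$ and $|1-2x_d|\le 1$. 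Hence
\begin{equation*}
    \mathcal{L}(v) \;\ge\; -d\gamma_0 + C\gamma_0\bigl(2\Lambda^{-1} - d\gamma_0\bigr),
\end{equation*}
which is nonnegative once I set $C = d\Lambda$ and choose $\gamma_0$ small enough (depending only on $d$ and $\Lambda$) so that both $C\gamma_0 \le 1$ and $2\Lambda^{-1} - d\gamma_0 \ge \Lambda^{-1}$ hold.

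No real obstacle is anticipated; this is a direct barrier construction. The only mildly delicate point is choosing a perturbation with two competing features: it must vanish on $\{x_d=0\}$ so that the bound $v\le 2x_d$ is not violated near the flat boundary, yet its $\mathcal{L}$-image must carry an $O(1)$ positive term that, after multiplication by the small prefactor $C\gamma_0$, still dominates the $O(\gamma_0)$ error coming from $\mathcal{L}(x_d)$. The function $x_d(1-x_d)$ fulfills both requirements because $\partial_d^2(x_d-x_d^2) = -2$ produces the uniformly positive coefficient $2A_{dd}\ge 2\Lambda^{-1}$ in $\mathcal{L}(x_d - x_d^2)$.
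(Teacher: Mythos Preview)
Your argument is correct and essentially the same as the paper's: both construct a quadratic barrier of the form $x_d + c\,x_d(1-x_d)$, check it is an $A$-supersolution when $\gamma_0$ is small, and apply the weak maximum principle. The paper simply takes the fixed choice $w = 2x_d - x_d^2$ (i.e., $c=1$), whereas you use $c = C\gamma_0$; your version incidentally yields the sharper bound $|u|\le (1+C\gamma_0)x_d$, but for the stated lemma the two proofs are interchangeable.
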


\begin{proof}
    Consider $w = 2x_d - x_d^2$. Clearly $w\ge u$ on $\partial B_{1,+}$. We check that $\nabla\cdot (A\nabla w) \le 0$ if $\gamma_0$ is sufficiently small. In fact,
    \begin{equation*}
        \nabla\cdot (A\nabla w) = -2a_{dd} + \sum_{i=1}^d \partial_i a_{id}(2-2x_d) \le -2\Lambda^{-1} +2 d\gamma_0.
    \end{equation*}
    Therefore, if $\gamma_0 = 1/(\Lambda d)$, we have $\nabla\cdot (A\nabla w) \le 0$ and thus $\nabla\cdot (A\nabla (w - u)) \le 0$. By the weak maximum principle,
    \begin{equation*}
        \inf_{B_{1,+}} (w-u) \ge \inf_{\partial B_{1,+}}  {(w-u)} \ge 0.
    \end{equation*}
    Consequently, $u\le w\le 2x_d$ in $B_{1,+}$. Similarly, one can show $u\ge -2x_d$ in $B_{1,+}$. This ends the proof.
\end{proof}

\begin{lemma}\label{lem.MoveCenter}
    Let $\Omega, A$ and $u$ be the same as in Lemma \ref{lem.ZerosAbsence}. If $\omega_0$ and $\gamma_0$ are sufficiently small, then for any $x\in B_{1/40,+}$,
    \begin{equation*}
        \sup_{\frac{1}{10}<r\le \frac15} N_u(x,r) \le 12 N + C_d,
    \end{equation*}
    where $C_d>0$ is an absolute constant depending only on $d$.
\end{lemma}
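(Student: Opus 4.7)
The plan is to shift the doubling index center from $x$ back to the origin, using Lemma \ref{lem.Jux1-x0}, and then exploit the hypothesis $N_u(0,1/2) \le N$ together with the almost monotonicity of $N_u(0, \cdot)$. Set $\theta = |x| \le 1/40$ and let $r \in (1/10, 1/5]$.

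First I would verify that the perturbation constant $C = C(\Lambda, d)$ from Lemma \ref{lem.Jux1-x0} is compatible with the constraint $\theta < r/C$ in the regime $\frac12 I \le A \le 2I$. Tracing through the proof of that lemma, the relevant constant is essentially $\sqrt{\Lambda} \le \sqrt{2}$ (coming from $\|A^{-1/2}\|$, plus a small correction $O(\gamma)$), so one can take $C \le 2$. Hence $C\theta \le 1/20 < r$, and Lemma \ref{lem.Jux1-x0} applies. Combining its two-sided bounds gives
\begin{equation*}
    N_u(x, r) \;=\; \log \frac{J_u(x, 2r)}{J_u(x, r)} \;\le\; \log \frac{(1+C\gamma\theta)\, J_u(0, 2r+C\theta)}{(1-C\gamma\theta)\, J_u(0, r-C\theta)} \;=\; \log \frac{J_u(0, s_2)}{J_u(0, s_1)} + O(\gamma),
\end{equation*}
where $s_2 := 2r+C\theta$ and $s_1 := r-C\theta$. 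Choosing $\gamma_0$ small absorbs the $O(\gamma)$ term into $C_d$.

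Next, by the numerical bounds on $r$ and $\theta$, one has $s_2 \le 2/5 + 1/20 = 9/20 < 1/2$ and $s_1 \ge 1/10 - 1/20 = 1/20 > 1/32$. Since $J_u(0, \cdot)$ is nondecreasing,
\begin{equation*}
    \log \frac{J_u(0, s_2)}{J_u(0, s_1)} \;\le\; \log \frac{J_u(0, 1/2)}{J_u(0, 1/32)} \;=\; N_u(0, 1/4) + N_u(0, 1/8) + N_u(0, 1/16) + N_u(0, 1/32).
\end{equation*}
Finally, I would invoke the almost monotonicity of the doubling index (Remark \ref{rmk.doubling}): if $\omega_0, \gamma_0$ are small enough, then $N_u(0, \rho) + 1 \le (1+\varepsilon)(N_u(0, 2\rho)+1)$ for all $\rho \le 1/4$, where $\varepsilon$ can be chosen so that $(1+\varepsilon)^4 \le 2$. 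Iterating at most four times from the base hypothesis $N_u(0, 1/2) \le N$ yields $N_u(0, 2^{-k-1}) \le (1+\varepsilon)^k(N+1) - 1 \le 2N + 1$ for each $1 \le k \le 4$. Summing the four telescoping terms gives the bound
\begin{equation*}
    \log \frac{J_u(0, s_2)}{J_u(0, s_1)} \;\le\; 4(2N+1) \;=\; 8N + 4,
\end{equation*}
so that $N_u(x, r) \le 8N + 4 + O(\gamma) \le 12N + C_d$ with plenty of slack.

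The main technical nuisance is not the logic but the bookkeeping of constants: one must verify that the perturbation constant $C$ in Lemma \ref{lem.Jux1-x0} is small enough (given $\Lambda \le 2$) to keep $s_2 < 1/2$ and $s_1$ safely above $1/32$, and that $\omega_0, \gamma_0$ are chosen small enough that the almost monotonicity factor satisfies $(1+\varepsilon)^4 \le 2$ uniformly on all the dyadic scales used in the telescoping sum. Both requirements follow from inspection of the relevant previous lemmas, but the lemma's statement deliberately hides these choices behind ``$\omega_0$ and $\gamma_0$ sufficiently small.''
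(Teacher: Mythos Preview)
Your proof is correct and follows essentially the same approach as the paper: shift the center of the doubling index from $x$ back to $0$, then telescope over dyadic scales and control each term via the almost monotonicity of $N_u(0,\cdot)$. The only cosmetic difference is that the paper carries out the center-shift by a direct ellipsoid containment $E(0,1/64) \subset E(x,1/10) \subset E(x,2/5) \subset E(0,1)$ (using that $A(0)$ and $A(x)$ are close when $\gamma_0$ is small) and then sums six doubling indices $\sum_{k=1}^6 N_u(0,2^{-k})$, whereas you invoke the packaged Lemma~\ref{lem.Jux1-x0} and thereby need only four dyadic steps, landing at $8N+4$ with room to spare under the stated bound $12N+C_d$.
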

\begin{proof}
    Let $\e>0$ be small such that $(1+\e)^5 \le 2$. By the assumption \eqref{est.boundedN} and Lemma \ref{lem.Nu.A=I}, for $\omega_0$ and $\gamma_0$ sufficiently small, $1\le k\le 6$
    \begin{equation*}
        N_u(0,2^{-k}) +1 \le (1+\e)^{k-1} ( N_u(0,\frac12) + 1)\le 2 (N+1).
    \end{equation*}
    Let $x\in B_{1/40,+}$ and $1/10<r\le 1/5$. If $\gamma_0$ is sufficiently small, then the difference between $A(0)$ and $A(x)$ is small. Thus, in view of the definition of the ellipsoids $E(x,r)$, we have $E(0,1/64) \subset E(x,1/10) \subset E(x,2/5)  \subset E(0,1)$. It follows that
    \begin{equation*}
    \begin{aligned}
        N_u(x,r) &\leq \log \frac{J_u(x,2/5)}{J_u(x,1/10)} \le \log \frac{C J_u(0,1)}{J_u(0,1/64)} \\
        &= \log C + \sum_{k=1}^6 N_u(0,2^{-k}) \\
        &\le 12 N+C_d,
    \end{aligned}
    \end{equation*}
    as desired.
\end{proof}

\bibliographystyle{abbrv}
\bibliography{ref}
\end{document}